\theoremstyle{plain}
\newtheorem{theorem}{Theorem}[section]
\newtheorem{lemma}[theorem]{Lemma}                              
\newtheorem{proposition}[theorem]{Proposition}
\newtheorem{definition}[theorem]{Definition}
\newtheorem{example}[theorem]{Example}
\newtheorem{remark}[theorem]{Remark}
\newtheorem{assumption}{Assumption}
\definecolor{pao_green}{rgb}{0.0, 0.5, 0.1}
\definecolor{brinkpink}{rgb}{0.98, 0.38, 0.5}
\newcommand{\revision}[1]{\textcolor{black}{#1}}
\newcommand{\bx}{\mathbf x}
\newcommand{\bz}{\mathbf z}
\newcommand{\bw}{\mathbf w}
\newcommand{\by}{\mathbf y}
\newcommand{\bd}{\mathbf d}
\newcommand{\bxi}{\boldsymbol{\xi} }
\newcommand{\cL}{\mathcal L}
\DeclareMathOperator{\R}{\mathbb{R}}
\DeclareMathOperator{\range}{range}
\newcommand{\argmin}{\arg\!\min}
\newcommand{\blambda}{\boldsymbol{\lambda}}
\newcommand{\bmu}{\boldsymbol{\mu}}
\newcommand{\inp}[2]{\ensuremath{\langle{#1},{#2}\rangle}}
\title{
Stochastic Mirror Descent for Convex Optimization with Consensus Constraints
}
\author{A. Borovykh, N. Kantas, P. Parpas,  G. A. Pavliotis}
\begin{document}

\maketitle 

\begin{abstract}
The mirror descent algorithm is known to be effective in situations where it is beneficial to adapt the mirror map to the underlying geometry of the optimization model.
However, the effect of mirror maps on the geometry of distributed optimization problems has not been previously addressed. 
\revision{In this paper we study an exact distributed mirror descent algorithm in continuous-time under additive noise.
We establish a linear convergence rate of the proposed dynamics for the setting of convex optimization.}
Our analysis draws motivation from the Augmented Lagrangian and its relation to gradient tracking. 
To further explore the benefits of mirror maps in a distributed setting we present a preconditioned variant of our algorithm with an additional mirror map over the Lagrangian dual variables. 
\revision{This allows our method to adapt to both the geometry of the primal variables, as well as to the geometry of the consensus constraint. 
We also propose a Gauss-Seidel type discretization scheme for the proposed method and establish its linear convergence rate.
For certain classes of problems we identify mirror maps that mitigate the effect of the graph's spectral properties on the convergence rate of the algorithm. 
Using numerical experiments we demonstrate the efficiency of the methodology on convex models, both with and without constraints. Our findings show that the proposed method outperforms other methods, especially in scenarios where the model's geometry is not captured by the standard Euclidean norm.}
% We also explore their performance numerically in a non-convex application with neural networks. 
\end{abstract}

% % REQUIRED
% \textbf
%   Distributed optimization, mirror descent, pre-conditioning, interacting particles, stochastic optimization. 
% \end{keywords}

% REQUIRED

\section{Introduction}
The choice of mirror map has a significant impact on both the theoretical and numerical performance of the Mirror Descent (MD) algorithm \cite{beck17book,bubeck2014convex,nemirovsky83}. 
With an appropriate choice of the mirror map, MD captures the geometry of the optimization model more faithfully than other first-order methods, and for certain classes of problems it is known to outperform other methods both in theory and in practice \cite{beck17book,bubeck2014convex}.  
There is an extensive literature on the mirror descent algorithm. However, the effect of the choice of the mirror map for \emph{distributed} optimization problems has received much less attention (see Section \ref{sec: previous work} for related work).
% The aim of this paper is to propose a variant of mirror descent for distributed optimization. 
Distributed optimization problems, even when otherwise unconstrained, have to satisfy a consensus constraint. Existing algorithms do not capture the geometry of the consensus \revision{constraint}.
Motivated by the attractive theoretical and real-world performance of the mirror descent algorithm, in this paper we attempt to answer the following question: \emph{Does there exist a distributed variant of mirror descent that can accurately capture the geometry of distributed optimization models?} To answer this question, we propose a distributed mirror descent algorithm for the following optimization model, 
\begin{equation}\label{eq:distoptobj0}
f^\star=\min_{x^i\in\mathcal{X}} f(\bx)=\sum_{i=1}^N f_i(x^i), \ \ \ \text{s.t.} \ \cL\bx=0.
\end{equation}
\revision{Where $f_i$ denotes the objective function of node or particle $i$, $x^i\in\mathcal{X}\subset \R^d$, $i=1,\ldots,N$. 
When it is not necessary to distinguish between the particles we write $\bx^\top=[x^1 \ \ldots \ x^N]$ with $\bx\in \mathcal X^N$,
where $\mathcal X^N=\mathcal X\times\ldots\times\mathcal X$.}  
We assume that the particles communicate through a strongly connected, weighted, undirected graph $\mathcal{G}:=(V,E,A)$. We use $V$ to represent the nodes of the graph, $E$ its edges, $A$ is the weighted adjacency matrix and \revision{$\cL$ is the graph's Laplacian (see \ref{sec:preliminaries} for exact definitions).  Note that $\cL\bx=0\iff x_i=x_j, \forall (i,j)\in E$
}. We assume that each particle has access to its own objective function $f_i:\R^d\rightarrow \R$, and constraint set $\mathcal{X}\subset \R^d$. 
\begin{figure}[t]
\captionsetup[subfigure]{justification=Centering}
 \begin{subfigure}[t]{0.45\textwidth}
         \includegraphics[width=\textwidth]{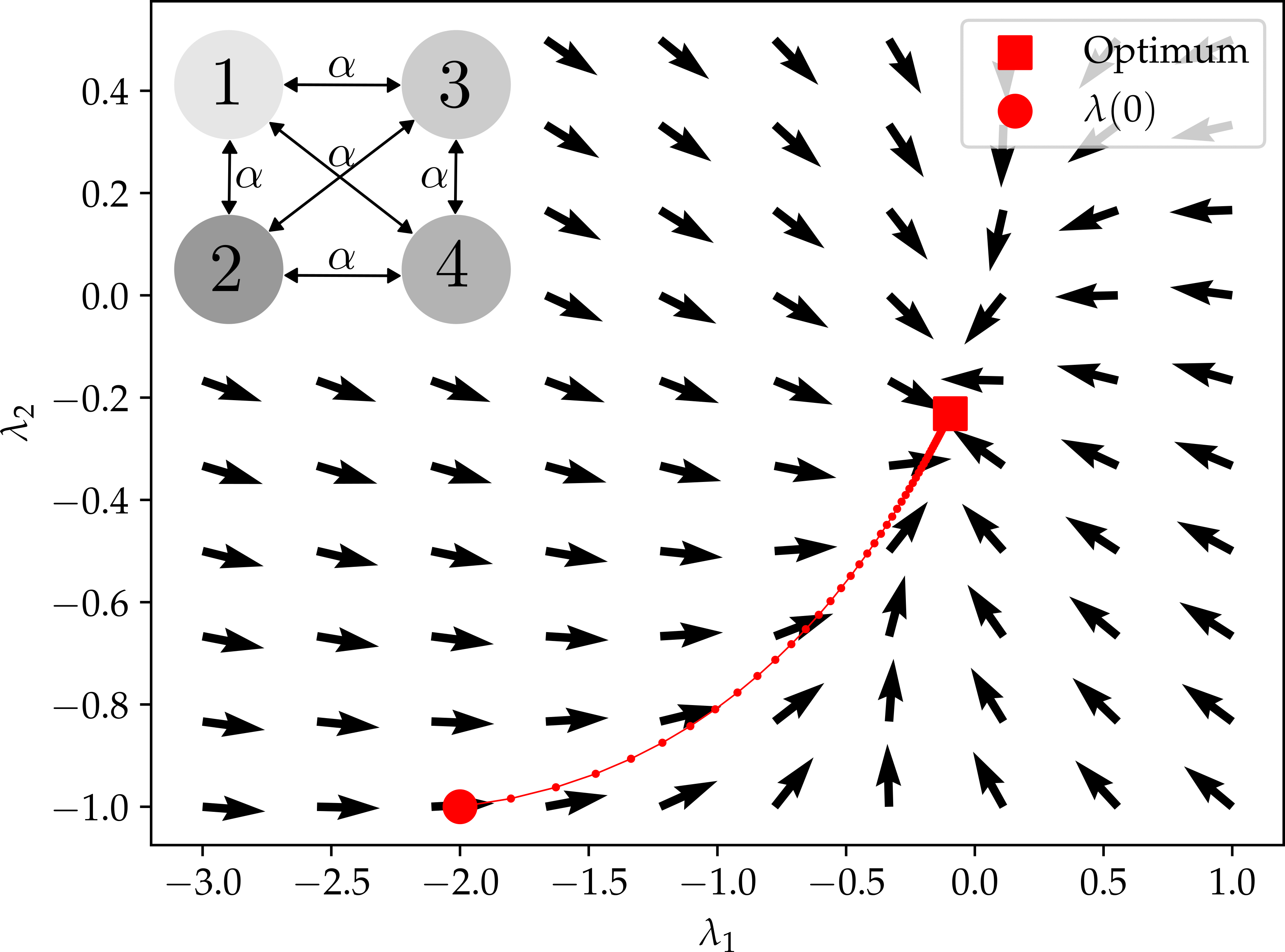}
         \caption{\revision{Dual ascent with a complete interaction graph $\alpha=1$.}}
         \label{fig:intro fully connected}
     \end{subfigure}\hspace{\fill} % maximize horizontal separation
\begin{subfigure}[t]{0.45\textwidth}
         \centering
         \includegraphics[width=\textwidth]{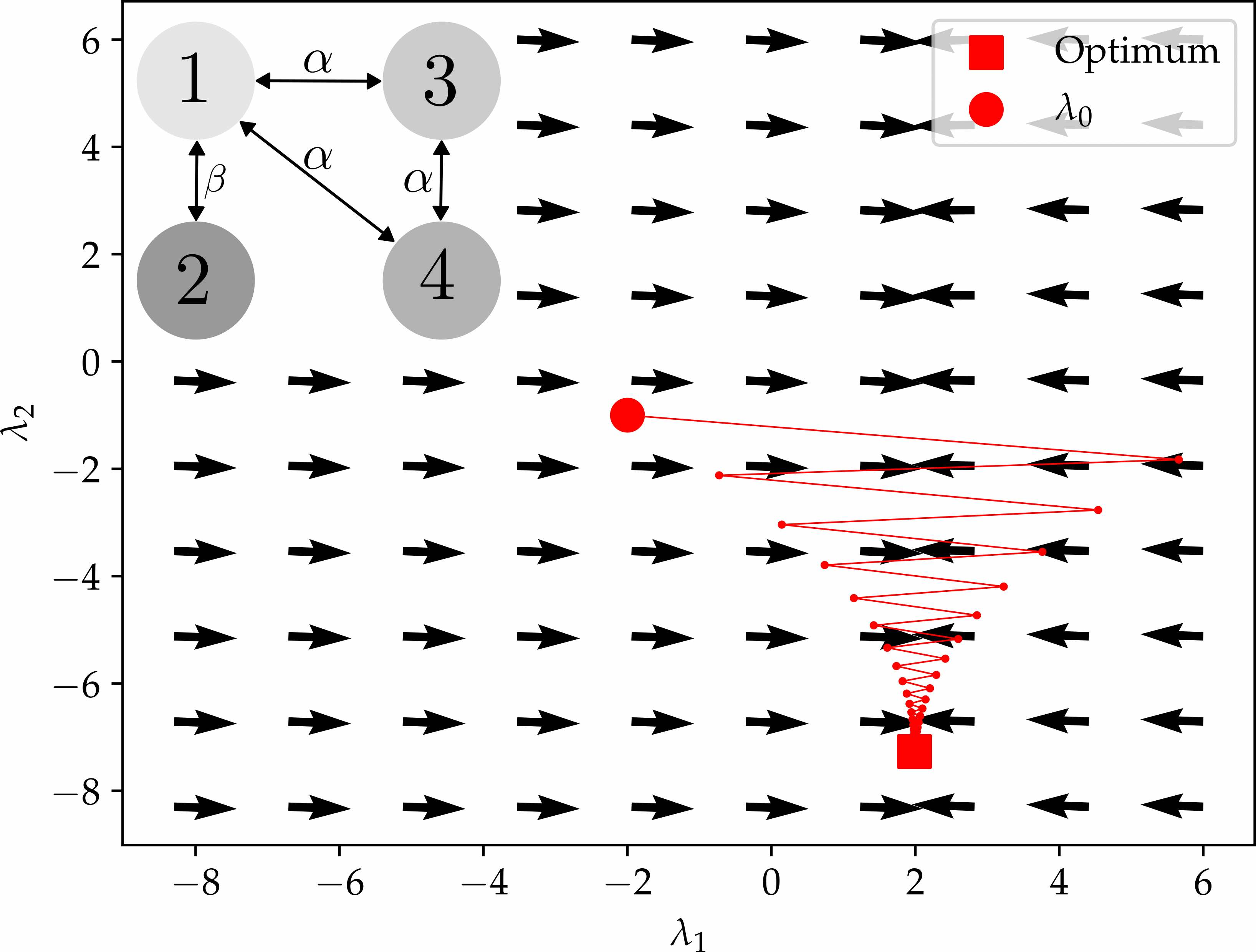}
         \caption{\revision{Dual ascent with an incomplete interaction graph $\alpha=1, \ \beta=0.1$.}}
         \label{fig:intro weakly connected}
     \end{subfigure}
\caption{
  \revision{
  Vector fields of the gradient of the dual function in \eqref{eq:intro dual function} for (a) a complete graph and (b) an incomplete graph. 
The two graphs are shown inset in the left-hand side corners of (a) and (b) respectively (for details see \ref{example intro}). In (a) the Laplacian of the graph is well-conditioned, in (b) the Laplacian is ill-conditioned. An example solution trajectory, obtained using an explicit discretization scheme, 
is shown in red for both cases. The explicit discretization scheme leads to oscillatory behavior and to eliminate it a small step-size needs to be used resulting in a slow convergence rate.}\label{fig:intro plots}}
        % \vspace{-.5cm}
\end{figure} 

\revision{
In mirror descent algorithms the mirror map is selected according to the geometry of \(\mathcal{X}\), and is independent of the consensus constraint. Unlike the $\mathcal X$ constraint, the consensus constraint couples all the particles together, and can therefore have a significant impact on the convergence properties of distributed optimization methods for \eqref{eq:distoptobj0}. The impact of the consensus constraint becomes apparent by examining the Lagrangian dual function of \eqref{eq:distoptobj0} given by,
\begin{equation}\label{eq:intro dual function}
q(\blambda)=\inf_{\bx\in\mathcal{X}^N} \left\{f(\bx)+\blambda^\top \cL\bx\right\}.
% \\&=-\max_{\bx} -\blambda^\top \cL\bx-f(\bx)\\
% &=-f^*( -\cL^\top\blambda)
\end{equation}
Under the usual convexity assumptions strong duality holds i.e. $f^\star=\sup_{\blambda\in\mathbb{R}^{Nd}} \{q(\blambda)\}$ and $q(\blambda)$ is concave and differentiable (see e.g. \cite[Section 6.3]{bazaraa2013nonlinear}). Therefore, a natural starting point for a mirror descent algorithm is to generalize the dual-ascent method,
\begin{equation}\label{eq:dual ascent dynamics}
 \dot\blambda_t=\nabla_{\blambda} q(\blambda_t), 
\end{equation}
by introducing a mirror map for the Lagrange multipliers $\blambda$. The example below illustrates the challenges associated with a naive application of the dual ascent dynamics in \eqref{eq:dual ascent dynamics} to distributed optimization problems.
\begin{example}\label{example intro}
Consider a simple distributed optimization problem in the form of \eqref{eq:distoptobj0} where the objective function is given by the quadratic $\sum_{i=1}^4 (x^i-b^i)^2$, with $x^i\in\R$, and $b^i\in\R$ chosen randomly. To illustrate the importance of the spectral properties of $\cL$ we consider two different strongly connected graphs with $N=4$ nodes.
In \ref{fig:intro fully connected} we visualize a $2D$ projection of the vector field of $\nabla q(\lambda)$ for the fully connected graph shown inset. 
We obtained the $2D$ projection on $\lambda_1$ and $\lambda_2$ by fixing the other two Lagrange multipliers to their optimal value. 
The red line shows the path to a particular numerical solution of \eqref{eq:dual ascent dynamics} when the explicit Euler method is used to solve \eqref{eq:dual ascent dynamics}. 
Now consider the same problem but with a different strongly connected but incomplete graph. 
Such a graph is shown in the inset of \ref{fig:intro weakly connected}.
In this case the vector field of \eqref{eq:dual ascent dynamics} has completely different qualitative features. 
In \ref{fig:intro weakly connected} we observe that the optimal solution is along $\lambda_2=2$, and all paths reach this point after a \emph{rapid transient phase}. 
Again, the red line in \ref{fig:intro weakly connected} shows the trajectory of the explicit Euler discretization scheme applied to \eqref{eq:dual ascent dynamics}. 
The oscillations in the solution trajectory are a typical feature of stiff dynamical systems (see \cite[IV.1]{wanner1996solving}). 
It is well known that unless a small step-size is used it is common to observe oscillations in the numerical solution of stiff differential equations. 
Stiff dynamics present a significant challenge to the development of computationally efficient algorithms because optimization algorithms typically rely on explicit discretization schemes. The problem is even worse in the stochastic setting where constant step-sizes are used. We will revisit this example in \ref{sec:Discretization Analysis} and show that even when using an explicit discretization scheme the proposed algorithm converges in a single step independently of the graph's Laplacian.  
\end{example}
}

\revision{
Guided by the observations above, it is tempting to consider the following natural generalization of the Mirror Descent algorithm,
\begin{equation}\label{eq:intro dual md}
\partial_t\Psi(\blambda_t)=\nabla_{\blambda} q(\blambda_t).
\end{equation}
One could then select the mirror map for the dual variables, $\Psi$, in such a way as to better reflect the geometry of the interaction graph $\mathcal G$.
An additional advantage of such an approach is that we could apply existing results from the literature to establish both the convergence and convergence rate of \eqref{eq:intro dual md}. 
However, there are two significant challenges associated with the introduction of the mirror map $\Psi$ to the dual-ascent dynamics of \eqref{eq:dual ascent dynamics}. 
Firstly, computing the gradient of the dual requires the solution of the maximization problem in \eqref{eq:intro dual function}. 
Requiring the solution of an optimization problem in every iteration is only possible for problems with specific structure. 
Therefore using \eqref{eq:intro dual md} may be computationally infeasible for many problems. 
Secondly, even if the objective function is strongly convex the dual function in \eqref{eq:intro dual function} is concave and not strongly concave.
The dual function would be strongly concave if the Laplacian matrix had full rank. 
It is well known that the Laplacian matrix associated with a graph $\mathcal G$ does not have full rank (see \cite[Theorem 6.6]{bullo2019lectures}). 
Therefore applying existing convergence results to \eqref{eq:intro dual md} would only guarantee a sub-linear rate such as $O(1/T)$.
The objective of this paper is to address these challenges and to do so for the case where only noisy estimates of the gradients are available. 
Our results are based on a continuous-time analysis of distributed stochastic mirror descent dynamics from a dynamical systems perspective. Our contributions are summarized below. 
\begin{itemize}
    \item We propose a variant of the distributed mirror descent algorithm called Exact Preconditioned Interacting Stochastic Mirror Descent (EPISMD), that is able to converge exponentially fast to a neighborhood of the solution.   The neighborhood can be made arbitrary small in the case where there is no noise. 
    \item In order to avoid the computational challenges of computing the gradient of the dual we use an Augmented Lagrangian formulation. 
    For distributed gradient descent methods the links between the Augmented Lagrangian, gradient tracking and distributed optimization are well known.
    The novel aspect here is the analysis in the context of mirror descent methods where the link between the Augmented Lagrangian and the dynamics of mirror descent are less developed.
    \item We avoid the technical difficulty associated with the fact that the Laplacian matrix is positive semi-definite (and not positive definite) by showing that if the algorithm is initialized correctly then it will never visit a solution that is in the null space of the Laplacian matrix (see \ref{lemma: range space lambda}). 
    \item The analysis is performed under general assumptions regarding the choice of mirror maps. 
     We propose a Gauss-Seidel type discretization scheme and show that it too converges exponentially fast (Proposition \eqref{prop: discretization main}). We also identify suitable mirror maps that enable an improved convergence rate when the interaction graph is ill-conditioned (see \ref{sec:Discretization Analysis}).
    We note that the discretization analysis is based on the Lyapunov function identified from the continuous time analysis.
    \item We use the insights obtained from the properties of the dual function in order to propose a suitable mirror map for the Lagrange multipliers. See \ref{sec:choose precondtioner} for motivation, 
    \ref{sec:Discretization Analysis} for the analysis in discrete time and \ref{sec:num} for a numerical implementation. 
    \item In \ref{sec:num} we illustrate the performance of the proposed algorithm in constrained and unconstrained convex optimization problems. 
    The performance of the proposed method is in agreement with the theoretical results and also outperforms other state-of-the-art algorithms.
    When the graph is ill-conditioned then the proposed algorithm can be several orders of magnitude faster than other state-of-the-art methods.
\end{itemize}
}

\begin{table}[t]
   % \begin{minipage}{\textwidth}
    \centering
    \small
    \begin{tabular}{l|c|c|c|c}
         Reference& Mirror (Primal,Dual) & Linear Rate     &Noise & Constant step \\ \midrule\midrule
         Liang et al. \cite{liang2019exponential} & $(\times,\times)$ & $\times$ %\footnote{Only when having already converged close to the optimum.}
         &  $\times$ & $\times$  \\ \midrule      
         \cite{lin2016distributed, gharesifard2013distributed,liu2014continuous, zeng2016distributed} & $(\times,\times)$ & $\times$   & $\times$ & $\times$  \\ \midrule 
       
         % \midrule 
           \cite{di2016next, xu2015augmented} & $(\times,\times)$ & $\times$   & $\checkmark$ & $\checkmark$ \\ \midrule %xu2015augmented is constant, while di2016next their conv result is for diminishing...
           Ram et. al.    \cite{ram2010distributed}
        & $(\times,\times)$ & $\times$   & $\checkmark$ & $\checkmark$ \\ \midrule 
         Shi et al. \cite{shi2015extra} & $(\times,\times)$ & $\checkmark$   & $\times$ & $\checkmark$ \\ \midrule 
        Qu \& Li \cite{qu2017harnessing} &$(\times,\times)$ & $\checkmark$   & $\times$ & $\checkmark$ \\ \midrule
         % Jakovetic et al. \cite{jakovetic2014fast} & No & Sub-Linear & No & No & Diminishing \\ \midrule
        Pu \& Nedic \cite{pu2021distributed}, Sun et al.\cite{sun2019distributed} & $(\times,\times)$ & $\checkmark$   & $\checkmark$ & $\checkmark$ \\ \midrule 
         \midrule
        
        Duchi et al. \cite{duchi11}
        & $(\checkmark,\times)$ & $\times$ &   $\checkmark$ & $\checkmark$ \\ \midrule
        Nedic et al. \cite{nedic2015decentralized} & $(\checkmark,\times)$ &$\times$   & $\checkmark$ & $\times$ \\ \midrule
        Shahrampour et al. \cite{shahrampour2017distributed} & $(\checkmark,\times)$ & $\times$   & $\checkmark$ & $\times$ \\ \midrule
         Sun et. al. \cite{sun2020distributed,sun2021linear} & $(\checkmark,\times)$ & $\times$   & $\times$ & $\checkmark$ \\ \midrule 
        Sun et. al. \cite{sun2021centralized} & $(\checkmark,\times)$ & $\checkmark$   & $\times$ & $\checkmark$\\ \midrule \midrule
        \bf This work & $(\checkmark,\checkmark)$ & $\checkmark$  & $\checkmark$ & $\checkmark$ \\\midrule

    % %     Reference& Mirror & Step-size  & Exact &Rate & Noise\\ \midrule
 \end{tabular}
    \caption{\revision{Overview of related work. Note that linear convergence rate refers to a global linear rate.
}}
    \label{tab:convrates_2} \vspace{-1cm}
   % \end{minipage}
\end{table}        

\subsection{Previous work}\label{sec: previous work}
Distributed optimization has a variety of applications. 
A classic reference for distributed optimization is \cite{bertsekas2015parallel}, and more recent applications in statistical learning are described in \cite{boyd2011distributed}. The authors in \cite{bullo2019lectures} also describe several interesting applications. 
% A variant of distributed optimization known as federated learning \cite{mcmahan2017communication} was proposed recently for solving optimization problems in which the data is stored across a very large number devices for privacy purposes.
The literature on distributed optimization algorithms is vast. Since this paper focuses on exact distributed first-order algorithms for convex optimization models, we will focus on this class of algorithms. 
\revision{Note that in this context an exact algorithm is one that, in the absence of noise, converges to a solution of \eqref{eq:distoptobj0}}. 
Two algorithmic techniques can be used to develop exact distributed optimization algorithms. The first technique uses diminishing step-sizes, and the second one relies on gradient tracking. 
Gradient tracking is closely related to Augmented Lagrangian methods, however we remark that in the case of mirror maps the relationship between the two is less straightforward due to the mirrored variables. 
Algorithms with diminishing step-sizes tend to be very slow. 
So recent literature focuses on using constant step-sizes in combination with gradient tracking. 
\revision{ In Table \ref{tab:convrates_2}, we summarize selected related works that show how this paper fits within the existing literature. 
The top half of the table shows that there are many distributed algorithms that are based on the gradient descent method. 
Many of these works achieve a global linear convergence rate while also addressing the case of noisy gradients. 
As noted in the introduction there are many settings (e.g. optimization over the simplex) where mirror descent algorithms outperform projected gradient methods. 
For this reason there have been many attempts to generalize mirror descent to the distributed setting.
The bottom half of the table summarizes various recent works that study the mirror descent algorithm in the distributed setting. 
Among these only \cite{sun2021centralized} is an exact algorithm with linear convergence but does not deal with noisy gradient evaluations or mirroring of the Lagrangian dual variables. Moreover, the proof in \cite{sun2021centralized} is based on the solution of a Semi-Definite Programing (SDP) optimization model, and it is unclear if the proof technique can be extended to deal with noisy gradient evaluations.
Therefore compared to existing works, the algorithm we propose in this paper is developed in both continuous and discrete time, allows for additive Brownian noise, and achieves a global linear convergence rate, while being exact in the no-noise setting. 
Moreover, the mirror maps in the existing literature are used only to model the geometry of the separable constraints and/or the local objective functions and not the consensus constraint that is the main focus and distinctive feature of this paper. 
We achieve the latter by allowing for an additional mirror map for the Lagrange multipliers of the consensus constraint. For our analysis we use a Lyapunov function that suitably combines both the preconditioning of the primal and Lagrangian dual variables to attain non-asymptotic convergence results.
}
% Finally, we note mirror descent dynamics are related to Riemannian descent as presented in e.g. \cite{badiei2017stochastic, chen2021decentralized} and preconditioning \cite{hendrikx2020statistically, arjevani2015communication, shamir2014communication}.
% Removing as APPX commented out
%In the supplementary material \ref{app:relation} the relationship between \cite{shi2015extra, sun2020distributed, qu2017harnessing}, Riemannian descent, and the algorithms used in our work are presented in more detail. 

\subsection{Notation}
We use $\otimes$ to denote the Kronecker product, $I_d$ the $d$-dimensional identity matrix and $\mathbf 1_d$ denotes the $d$-dimensional vector of ones. $\textnormal{Diag}(a)$ with $a\in\mathbb{R}^d$ denotes a matrix with diagonal elements $[a_1,...,a_d]$. We use $A$ to denote the $N\times N$ weighted adjacency matrix associated with a graph $\mathcal G=(V,E)$. The graph Laplacian is given by $L:=\textnormal{Diag}(A\mathbf{1}_N)-A$ and we use the following notation $\mathcal{L}:=L\otimes I_d$ with $\mathcal{L}\in\R^{Nd\times Nd}$ to denote the vectorized version of the graph Laplacian. We use $\inp{x}{y}=x^\top y$ for the standard dot product, and $\inp{x}{y}_Q=\inp{x}{Qy}=x^\top Qy$ for the $Q$-inner product, for some positive definite matrix $Q$. \revision{We use $A\succeq B$ to denote that $A-B$ is positive semi-definite}. We assume that $\mathcal{X}\subseteq\mathbb{R}^d$ is a convex set.
We use $\mathcal D$ to denote an open set such that $\mathcal X\subset \text{cl} (\mathcal D)$. The set $\mathcal D$ will be used to denote the domain of the mirror maps of the mirror descent algorithm. We use $\mathcal X^*$ to denote the dual space of $\mathcal X$. The normal cone of $\mathcal X$ is defined as 
$N_{\mathcal X}(x)=\{z\in\mathcal X^*~|~ \inp{z}{y-x}\leq 0 ~\ \forall y\in\mathcal X\}$.

Given an arbitrary norm $||\cdot||$ on $\mathbb{R}^d$, we will define 
$B_{\|\cdot\|}:=\{v\in\mathbb{R}^d:\|v\|\leq 1\}$. The dual norm $\|\cdot\|_*$ is defined as $\|z\|_*:=\sup\{\inp{z}{v}:v\in B_{\|\cdot\|}\}$. 
If $A$ is a matrix then $\|A\|_2$ denotes its spectral norm and %and $\|A\|_F$ denotes its Frobenius norm given by $\|A\|_F^2=\sum_{i=1}^N\lambda_i^2$, where $\lambda_i$ denotes the eigenvalues of $A$ (assumed to be real). 
we assume that the dual norm is compatible with the spectral norm, i.e. $\|Az\|_*\leq \|A\|_2\; \|z\|_*$. We will make use of the following generalized Cauchy inequality,
\revision{
\begin{align}
|\inp{v}{w}| \leq \|v\|_*\|w\| \ \ \ \forall w\in \mathcal X, \ v\in\mathcal X^*.
\end{align}
Since $0\leq (\|v\|_*-\|w\|)^2=\|v\|^2_*+\|w\|^2-2\|v\|_*\|w\|$, we also have,
\begin{equation}\label{eq: bound inp}
|\inp{v}{w}|\leq \frac12 \|v\|_*^2+\frac12 \|w\|^2.
\end{equation}
}

A function $g$ is said to be $L$-Lipschitz continuous with respect to a norm 
$\|\cdot\|$ if $\|g({x})-g({y})\|\leq L\|{x}-{y}\|, \ \forall x,y\in\mathcal X$.  The Bregman divergence associated with a convex, differentiable function $g~:\:\mathcal{X}\rightarrow\mathbb{R}$ is defined as follows,
\begin{align}
    D_g({x},{y})=g({x})-g({y})-
    \inp{\nabla g({y})}{{x}-{y}}.
\end{align}
If the second-order derivative of $g$ exists it furthermore holds,
\begin{align}
    &\nabla_{x} D_g({x},{y}) = \nabla g({x})-\nabla g ({y}), \;\; \nabla_{y} D_g({x},{y}) = \nabla^2 g({y})({y}-{x}).\label{eq:breg1}
\end{align}
\revision{The aggregate cost function will be written as $f(\bx)=\sum_{i=1}^N f_i(x^i),$ where $\mathbf{x}=[{x^1},\ldots,{x^N}]^T$ denotes the stacked vector of particles and each $x^i\in\mathcal{X}$. 
For a function $f:\mathcal X^N\mapsto \R$, unless specified otherwise gradient vectors $\nabla f$ are taken with respect to the joint particle vector $\mathbf{\bx}$ following the convention that $\nabla f(\bx)^\top=[\nabla f_1(x_1) \ \nabla f_2(x_2) \ \ldots \ \nabla f_N(x_N)]$  and for Hessian matrices we write $\nabla^2 f(\bx)=\text{diag}(\nabla^2f_1(x_1),\nabla^2f_2(x_2), \ \ldots , \nabla^2 f_N(x_N))$. We will use $(X^\star,\Lambda^\star)$ to denote the set of primal-dual variables that satisfy the first order optimality conditions for \eqref{eq:distoptobj0}.}

\begin{remark}\label{remark:mixed norms}
The space of the Lagrange multipliers for the consensus constraint, $\blambda\in\Lambda\subset \R^{Nd}$, will play an important role in the definition of the algorithms below. 
We note that the norm associated with $\blambda\in\Lambda$ will not necessarily be the same as the one used for the primal variables $\bx\in\mathcal{X}^N$. 
We will however use the same notation: $\|\cdot\|$, and its dual $\|\cdot\|_*$ for both spaces, and it will be clear from context which norm is being used. For $\bw=[\bx^\top, \ \blambda^\top]^\top$ we will use the following mixed norm convention $\|\bw\|=\|\bx\|+\|\blambda\|$, with the understanding that the two norms could be different. For example, the norm in $\mathcal X^N$ could be the $\ell_1$, and in $\Lambda$ the $Q$-norm (for some positive definite matrix $Q$) so that,
$\|\bw\|=\|\bx\|_1+\|\blambda\|_Q.$
\end{remark}

\color{black}
\section{Distributed Stochastic Mirror Descent: Exact and Preconditioned Dynamics}\label{sec: algorithms}
In this section we introduce the proposed distributed MD algorithm.
We adopt a dynamical systems point of view for our analysis. 
The discretization of the proposed method is discussed in \ref{sec:Discretization Analysis} and we report on numerical experiments in \ref{sec:num}.
For an introduction to the original mirror descent algorithm we refer the interested reader to~\cite[Ch. 9]{beck17book}. 
We also remark that Langevin dynamics with mirror maps have been used in the sampling literature \cite{zhang2020wasserstein, chewi2020exponential, li2022mirror} but the the primary emphasis in this paper is on optimization.

The most widely studied algorithm for distributed Mirror Descent is the Interacting Stochastic Mirror Descent (ISMD) algorithm (also known as distributed dual averaging). In the continuous time setting the dynamics of ISMD are as follows,
\begin{equation}\label{eq:cmdpart}
dz_t^i = -\eta\nabla f_i(x_t^i) dt + \epsilon\sum_{j=1}^N A_{ij}(z_t^j-z_t^i) dt + \sigma dB_t^i,\quad x_t^i=\nabla\Phi^*(z_t^i),
\end{equation}
for particles $i=1,...,N$, and where $B_t^i$ are independent Brownian motions. The matrix $A=\{A_{ij}\}_{i,j=1}^N$ is an $N\times N$ doubly-stochastic matrix representing the interaction weights and $\eta,\epsilon$ are tuning constants representing the learning rate and interaction strength, respectively. 
For simplicity in most of the subsequent analysis we set $\eta=\epsilon=1$, but it is straightforward to extend the results to arbitrary values of $\eta$ and $\epsilon$.
In the context of modern large scale applications, we note that understanding convergence under the presence of noise is often motivated from computational considerations such as when sub-sampling the gradient of $f$ or sub-sampling the interaction graph. 

Using the graph Laplacian, we can rewrite the evolution in vector form as,
\begin{align}\label{eq:cmdpart_vec}
d\mathbf{z}_t =\left( - \nabla f(\mathbf{x}_t)-\mathcal{L}\mathbf{z}_t\right)dt + \sigma d\mathbf{B}_t, \;\; \mathbf{x}_t = \nabla \Phi^*(\mathbf{z}_t),
\end{align}
where $\mathbf{B}_t:=((B_t^1)^T,...,(B_t^N)^T)^T$. If $\Phi=I$ then the algorithm reduces to the standard Distributed Gradient Descent (DGD) in continuous time. This algorithm was proposed in \cite{raginsky12} but its convergence was only established for strongly convex quadratic functions. 
The case where all the functions are strongly convex and identical (i.e. $f_1=\ldots=f_N$), but not necessarily quadratic, was analyzed in \cite{borovykh20b}.
The discrete time version of the algorithm for the general convex case was discussed in \cite{duchi11}, but exact convergence was only established under a diminishing step-size strategy. Since we could not find a convergence analysis of \eqref{eq:cmdpart_vec} we provide a convergence proof in \ref{app:auxiliaryresults} that shows that the dynamics of \eqref{eq:cmdpart_vec} will converge to an approximate solution of \eqref{eq:distoptobj0}. 

It is known that DGD will not converge to the exact solution of \eqref{eq:distoptobj0}. 
When $\sigma=0$ and $\Phi=I$ then this observation follows from the fact that the fixed points of \eqref{eq:cmdpart_vec} do not coincide with the first-order necessary and sufficient conditions for optimality for the optimization problem in \eqref{eq:distoptobj0}; see 
%The particular case where $\sigma=0$ and $\Phi=I$ was studied in
\cite{shi2015extra,shi2015network} for more details in this case. 
% In the mirror descent algorithm, the mirror map $\Phi$ is a convex function that is chosen so that the dynamics in \eqref{eq:cmdpart_vec} are able to adapt to the constraints $\mathcal X$ (see \ref{sec:mirror maps} for precise assumptions).
% The mirror map transforms the primal variables $x_i\in\mathcal X$ to the dual space $\nabla \Phi$, then performs a descent step in the dual space and finally a projection is applied to return back to the primal space $\mathcal X$. 
In \ref{sec:first order fails} we adapt arguments of \cite{shi2015extra,shi2015network} to show that the dynamics in \eqref{eq:cmdpart_vec} also fail to converge to the exact solution of \eqref{eq:distoptobj0} and that the convergence of \eqref{eq:cmdpart_vec} to the exact solution of \eqref{eq:distoptobj0} can only occur under additional and restrictive assumptions on $f$. 
In simple terms, our result indicates that one cannot make the dynamics of DGD exact just by an appropriate choice of mirror map.

\subsection{Exact Preconditioned Interacting Stochastic Mirror Descent (EPISMD)}
The ISMD algorithm described above has two limitations: (1) even if $\sigma=0$ the stable points of \eqref{eq:cmdpart_vec} do not coincide with the ones of \eqref{eq:distoptobj0} and (2) the mirror map $\Phi$ is only chosen to reflect the geometry of $\mathcal X$ and not the consensus constraint.
To address the first limitation we propose to exploit the links between the constrained optimization problem in \eqref{eq:distoptobj0} and its Augmented Lagrangian,
\begin{equation}\label{eq:al difinition}
L(\bx,\blambda)=f(\bx)+\inp{\blambda}{\cL\bx}+\frac12\|\cL^\frac12\bx\|_2^2.
\end{equation}
Since we are in a convex setting we have by strong duality that $f^\star=\max_{\blambda}\min_{\bx} L(\bx,\blambda)$. 
To address the first shortcoming it is entirely natural to perform a descent step on \eqref{eq:al difinition} w.r.t $\bx$ and an ascent step w.r.t the Lagrange multipliers
$\blambda$. 
Note that the Augmented Lagrangian in the form of \eqref{eq:al difinition} is preferred to the classical Lagrangian because of the regularizing effect of the quadratic term during the ascent/descent steps (see \cite[Chapter 9]{bazaraa2013nonlinear}). To address the second shortcoming we use two mirror maps. The first mirror map $\Phi$, is used to reflect the geometry of the primal space $\mathcal X$. Then, a second mirror map, $\Psi$, is used to adapt to the geometry associated with the consensus constraint in \eqref{eq:distoptobj0}.
We use $\bmu$ to denote the dual of the Lagrange multiplier, i.e. $\bmu=\nabla \Psi(\blambda)$. 
Figure \ref{fig:dmd} explains the main steps in mirror descent with the two maps. At time-step $t$ the primal-dual pair $(\bx_t,\blambda_t)$ is mapped to 
$(\bz_t,\bmu_t)=(\nabla \Phi(\bx_t),\nabla \Psi(\blambda_t))$. 
The inverse $(\nabla \Phi^{-1},\nabla \Psi^{-1})$ maps the duals back to the primal space $\mathcal X\times\Lambda$.
The proposed algorithm, is given below,
\begin{equation}\label{eq:alm continious sde prec}
\begin{split}
 &   d\mathbf{z}_t=-\nabla f(\mathbf{x}_t)dt-\mathcal L\mathbf{x}_tdt - \mathcal L\blambda_tdt +\sigma d\mathbf{B}_t,  \quad \bx_t=\nabla\Phi^*(\bz_t)\\
 &   d\bmu_t=\mathcal L\mathbf{x}_t dt, \quad \blambda_t=\nabla\Psi^*(\bmu_t).
\end{split}
\end{equation}
The motivation for the dynamics above stems from the Bregman Augmented Lagrangian. 
One can view the dynamics in \eqref{eq:alm continious sde prec} as performing a descent step with respect to primal variables $\bx$ and an ascent step with respect to the Lagrangian dual variables.
To see the idea behind this intuition, we note that in the noise-free case ($\sigma=0$) we can characterize \eqref{eq:alm continious sde prec} as a limiting case of the classical proximal algorithm of \cite{censor1992proximal}
\[
\begin{split}
 &   {\bz}_{t+\delta} = \inf_{\by}\left\{L(\by,\lambda_t)+\frac{1}{\delta}D_\Phi(\by,\bx_t)\right\},\\
 &   \bmu_{t+\delta} = \sup_{\mathbf{\nu}}\left\{ L(\bx_t,\mathbf{\nu})-\frac{1}{\delta}D_\Psi(\mathbf{\nu},\blambda_t)\right\}.
\end{split}
\]
Applying the first order optimality conditions to the two optimization problems above and taking formally the limit $\delta\downarrow0$ we can derive \eqref{eq:alm continious sde prec} with $\sigma=0$; see \cite{alvarez2004hessian} for a more rigorous exposition. %In the next section we study the convergence of the proposed algorithm.

\begin{figure}[t]
     \centering
     \includegraphics[width=0.8\textwidth]{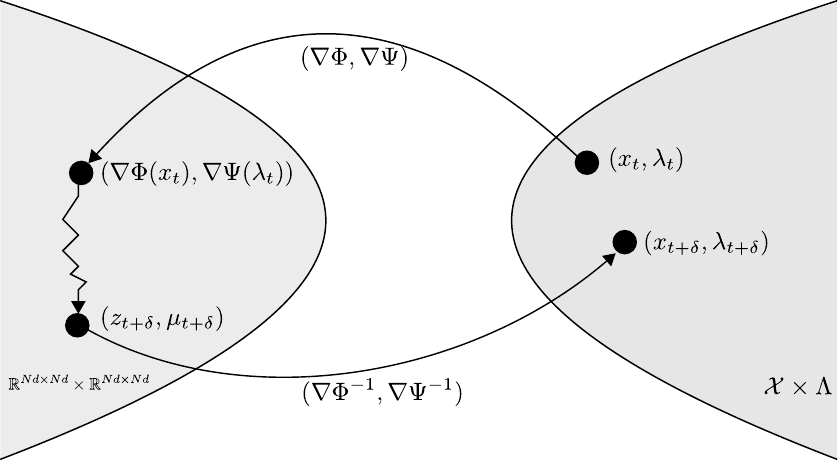}
      \caption{Stochastic Mirror Descent with two mirror maps. $\Phi$ maps the primal variables to the dual space, and $\Psi$ maps the Lagrangian dual variables associated with the consensus constraint to the dual of the Lagrange multipliers.}
        \label{fig:dmd}
\end{figure}

\color{black}
\section{Convergence Analysis of Distributed Stochastic Mirror Descent: Exact and Preconditioned Dynamics}
\revision{
In this section we show that the dynamics of EPISMD in \eqref{eq:alm continious sde prec} converge exponentially fast to an area of the optimum.
This area depends on $\sigma$. If $\sigma=0$ the algorithm converges to the exact solution of \eqref{eq:distoptobj0}.
We start this section by collecting all our assumptions and some preliminary results that will be useful later on.
}

\subsection{Preliminaries and Assumptions}\label{sec:preliminaries}
\revision{
We split our assumptions into three broad categories. 
We start in \ref{sec:optimilaity conditions} with our assumptions regarding the optimization model.
In \ref{sec: network assumptions} we state our assumptions regarding the network, and finally in \ref{sec:mirror maps} we discuss our assumptions for the mirror maps.
These assumptions will hold throughout the paper. We make some additional assumptions in \ref{sec:Discretization Analysis} when we introduce our discretization scheme.
We also state some technical lemmas that will be useful for the analysis of the method and whose proofs are presented in \ref{app:auxiliaryresults}.
}
\subsubsection{Optimality Conditions and Model Assumptions}\label{sec:optimilaity conditions} 
The consensus constraint in \eqref{eq:distoptobj0} is satisfied if and only if $\cL\bx=0$, where $\cL$ denotes the vectorized graph Laplacian. Therefore the optimality conditions for \eqref{eq:distoptobj0} are as follows,
\revision{
\begin{align}
-\nabla f(\bx^\star)-\cL\blambda^\star&\in N_{\mathcal X^N}(\bx^\star),\\
\cL\bx^\star& =0.
\end{align}
}
We make the following assumptions regarding the objective function and solution set.
\begin{assumption}\label{ass:f}  Each $f_i$ in \eqref{eq:distoptobj0} is proper, convex and twice differentiable. The elements of $X^\star$ are in the interior of $\mathcal X$.
\end{assumption}
If the solution of \eqref{eq:distoptobj0} is in the interior of $\mathcal {X}$, and if $f$ is convex, then we must have that $N_{\mathcal X}(\bx^\star)=\{\mathbf 0\}$ for any $\bx^\star\in X^\star$. Because the focus of this paper is on the effect of the consensus constraint and its impact on the dynamics of the algorithm, we will assume that the optimal solution of \eqref{eq:distoptobj0} is in the interior of $\mathcal X$. 
% The consensus constraint couples all the particles together, its impact on the algorithm's convergence is far less understood than dealing with separable constraints on $\mathcal X$. 
For certain applications, especially in machine learning, the assumption that the solution lies in the interior of the feasible set holds (e.g. \cite{mertikopoulos18,sra2012optimization}). 
The extension to the general case requires modifications to our convergence analysis similar to \cite{mertikopoulos18} and will be an interesting extension to our work. %In Section \ref{sec:conclusions} we discuss the modifications needed to extend our analysis and will be addressed in future work. 
The derivatives of $f$ are required so that we can apply It\^o's formula. 
% Since we assume that the function is convex this assumption could be relaxed (see \cite{mertikopoulos18} Proposition C.2), but the assumption is kept here for simplicity and brevity. We proceed with some standard convexity and smoothness definitions.
\begin{definition}
\revision{
We say that $f:\mathcal X^N\rightarrow \R$ is $\mu$-strongly convex w.r.t. some norm $||\cdot||$ provided that $ \|\nabla f(\mathbf{x})-\nabla f(\mathbf{y})\|_* \geq \mu \|\mathbf{x}-\mathbf{y}\|$. Similarly, a function $f$ is $L$-smooth w.r.t. some norm $\|\cdot\|$ when
$\|\nabla f(\mathbf{x})-\nabla f(\mathbf{y})\|_*\leq L\|\mathbf{x}-\mathbf{y}\|$.
}
\end{definition}
Some of our results will use the notion of relative strong convexity and smoothness.
We refer the reader to \cite{lu2018relatively}. Below we present some definitions and properties that will be useful later on.
\begin{definition}[Relative strong convexity]
A function $g:\mathcal X^N\rightarrow \R$ is $\mu$-strongly convex with respect to some convex function $h$ if for any $\bx,\by\in\mathcal X^N$ the following holds,
$$g(\mathbf{x})\geq g(\mathbf{y})+\nabla g(\mathbf{y})^T(\mathbf{x}-\mathbf{y})+\mu D_h(\mathbf{x},\mathbf{y}).$$
Or equivalently, 
$
\inp{\mathbf{x}-\mathbf{y}}{\nabla g(\mathbf{x})-\nabla g(\mathbf{y})}\geq 
\mu \inp{\mathbf{x}-\mathbf{y}}{\nabla h(\mathbf{x})-\nabla h(\mathbf{y})}.
$
\end{definition}
\begin{definition}[Relative smoothness]
A function $g:\mathcal X^N\rightarrow \R$ is $\alpha$-smooth with respect to some function $h$ if for any $\bx,\by\in\mathcal X^N$ the following holds,
$$g(\mathbf{x})\leq g(\mathbf{y})+\nabla g(\mathbf{y})^T(\mathbf{x}-\mathbf{y})+\alpha D_h(\mathbf{x},\mathbf{y}).$$
Or equivalently, 
$\inp{\mathbf{x}-\mathbf{y}}{\nabla g(\mathbf{x})-\nabla g(\mathbf{y})}\leq \alpha \inp{\mathbf{x}-\mathbf{y}}{\nabla h(\mathbf{x})-\nabla h(\mathbf{y})}$.
\end{definition}
If we assume that $g$ is $\mu$-strongly convex and $\alpha$-smooth with respect to $h$ it holds,
\begin{align}\label{eq:smoothnessandmore}
    \mu D_h(\mathbf{x},\mathbf{y})\leq D_g(\mathbf{x},\mathbf{y})\leq 
    \alpha D_h(\mathbf{x},\mathbf{y}).
\end{align}
We adopt the following definition for the convex conjugate of a relatively strong convex function.
\begin{definition}[Convex conjugate]
Let $g:\mathcal X^N\rightarrow \R$ be a $\mu$-strongly convex function with respect to some $h$. Then
$g^*(\mathbf{z}) := \max_{\mathbf{x}\in\mathcal{X}^N} \{ \inp{\mathbf{z}^T}{\mathbf{x}}-g(\mathbf{x})\}$
is its Legendre-Fenchel convex conjugate.
When $g$ is differentiable, we also have $\nabla g^*(\mathbf{z}):=\arg\max_{\mathbf{x}\in\mathcal{X}^N}
\{  \inp{\mathbf{z}^T}{\mathbf{x}}-g(\mathbf{x})\}$, and $\nabla g \circ\nabla g^*(\mathbf{z})=\mathbf{z}$. 
\end{definition}

\subsubsection{Network Assumptions}\label{sec: network assumptions}
We first state our assumptions on the network topology. 
\begin{assumption}\label{ass:graph}
The graph $\mathcal{G}$ is connected, undirected and the adjacency matrix $A$ is doubly stochastic.
\end{assumption} 
These assumptions imply that the graph Laplacian $\mathcal{L}$ is a real symmetric matrix with nonnegative eigenvalues. We will denote the pseudo-inverse of $\mathcal{L}$ by $\mathcal{L}^+$ such that, 
\begin{equation}
\mathcal{L}\mathcal{L}^+\mathcal{L}=\mathcal{L}.  \label{eq:pseudoinverse}
\end{equation}
We will use the following definition of the $\beta$-regularized Laplacian \cite{bullo2019lectures},
\begin{equation}\label{eq:reglapl}
    \mathcal{L}_\beta = \mathcal{L}+\frac{\beta}{N}\mathbf{1}_N\mathbf{1}_N^\top\otimes I_d.
\end{equation}
In the remainder we will assume $\beta>0$ as this results in $\mathcal{L}_\beta$ being positive definite. We define the  Rayleigh quotient associated with $\mathcal{L}_\beta$ as follows,
\begin{equation}\label{eq: Rayleigh quotient Reg Laplacian}
\kappa_{\beta,N} = \max_{\mathbf d_x\in\mathbb{R}^{Nd}\setminus\{0\}}
  \ \frac{\|\mathcal{L_\beta}^{\frac{1}{2}}\mathbf d_x\|_2^2}{\|\mathbf d_x\|_2^2}.
\end{equation}
It holds that \cite[p.116]{bullo2019lectures},
\begin{equation}
\label{eq:reglaplacianpseudo}
    \mathcal{L}_\beta^{-1} = \mathcal{L}^++\frac{1}{\beta N}\mathbf{1}_N\mathbf{1}_N^\top\otimes I_d \succeq \mathcal{L}^+,
\end{equation}
where the latter inequality follows from the fact that $\mathbf{1}_d\mathbf{1}_d^\top\otimes I_N$ is positive semidefinite. 
In \ref{sec: convergence} we will use the following technical Lemma. 
\begin{lemma}\label{lemma: bound laplacian}
 Let Assumption \ref{ass:graph} hold and suppose that $\kappa_{\beta,N}$ is as defined in 
 \eqref{eq: Rayleigh quotient Reg Laplacian} then,
 \begin{equation}
 \inp{\bx}{\mathcal L\bx}\geq 
 \frac{1}{\kappa_{\beta,N}}
 \|\mathcal L \bx\|_2^2.
 \end{equation}
\end{lemma}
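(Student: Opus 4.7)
The plan is to recast the inequality as a classical spectral bound for the symmetric positive semidefinite matrix $\mathcal L$, and then verify that $\kappa_{\beta,N}$ correctly upper bounds the top eigenvalue of $\mathcal L$. Under Assumption \ref{ass:graph}, $\mathcal L$ is real, symmetric, and PSD, so it admits a square root $\mathcal L^{1/2}$ and an orthonormal eigendecomposition $\mathcal L=\sum_i \lambda_i v_i v_i^\top$ with $0=\lambda_1\leq\dots\leq\lambda_{Nd}$.

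The central step is the elementary inequality
\[
\|\mathcal L\bx\|_2^2 \;\leq\; \lambda_{\max}(\mathcal L)\,\langle \bx,\mathcal L\bx\rangle,
\]
which I would obtain by setting $\bu:=\mathcal L^{1/2}\bx$ and writing $\|\mathcal L\bx\|_2^2 = \langle\bu,\mathcal L\bu\rangle\leq \lambda_{\max}(\mathcal L)\|\bu\|_2^2 = \lambda_{\max}(\mathcal L)\langle \bx,\mathcal L\bx\rangle$. The same bound can be read off coordinate-wise from the eigendecomposition, since $\sum_i \lambda_i^2 c_i^2\leq \lambda_{\max}(\mathcal L)\sum_i\lambda_i c_i^2$ when $c_i:=\langle v_i,\bx\rangle$, and the inequality is trivially preserved for the null modes of $\mathcal L$ where both sides vanish.

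The remaining step is to identify $\kappa_{\beta,N}$ with an upper bound on $\lambda_{\max}(\mathcal L)$. Any eigenvector of $\mathcal L$ with eigenvalue $\lambda_i>0$ is orthogonal to $\mathrm{null}(\mathcal L) = \mathrm{span}\{\mathbf 1_N\otimes e_k\}_{k=1}^d$, which coincides with the range of the rank-$d$ perturbation $\tfrac{\beta}{N}\mathbf 1_N\mathbf 1_N^\top\otimes I_d$ in \eqref{eq:reglapl}. Consequently, every positive eigenpair $(\lambda_i,v_i)$ of $\mathcal L$ is also an eigenpair of $\mathcal L_\beta$, so $\lambda_{\max}(\mathcal L_\beta)\geq\lambda_{\max}(\mathcal L)$, and the Rayleigh-type quantity in \eqref{eq: Rayleigh quotient Reg Laplacian} satisfies $\kappa_{\beta,N}\geq \lambda_{\max}(\mathcal L)$. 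Substituting into the spectral inequality and rearranging gives $\langle \bx,\mathcal L\bx\rangle\geq \frac{1}{\kappa_{\beta,N}}\|\mathcal L\bx\|_2^2$, as claimed.

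The proof is essentially linear-algebraic bookkeeping, with no real obstacle; the only mild subtlety is seeing that the $\beta$-regularization leaves the nonzero spectrum of $\mathcal L$ untouched, so $\kappa_{\beta,N}$ faithfully dominates $\lambda_{\max}(\mathcal L)$ and the loss in passing from $\mathcal L$ to $\mathcal L_\beta$ is harmless for the bound at hand.
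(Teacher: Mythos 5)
Your proof is correct, and it reaches the paper's inequality by a genuinely different (though equally elementary) route. The paper argues algebraically: it writes $\inp{\bx}{\mathcal L\bx}=\inp{\bx}{\mathcal L\mathcal L^+\mathcal L\bx}$, substitutes the identity $\mathcal L^+=\mathcal L_\beta^{-1}-\frac{1}{\beta N}\mathbf 1_N\mathbf 1_N^\top\otimes I_d$ from \eqref{eq:reglaplacianpseudo}, and uses $(\mathbf 1_N\mathbf 1_N^\top\otimes I_d)\mathcal L=0$ to obtain the \emph{exact} identity $\inp{\bx}{\mathcal L\bx}=\inp{\mathcal L\bx}{\mathcal L_\beta^{-1}\mathcal L\bx}$, after which $\mathcal L_\beta^{-1}\succeq\kappa_{\beta,N}^{-1}I$ finishes. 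You instead prove the sharper spectral bound $\|\mathcal L\bx\|_2^2\leq\lambda_{\max}(\mathcal L)\inp{\bx}{\mathcal L\bx}$ directly from the eigendecomposition and then relax $\lambda_{\max}(\mathcal L)$ to $\kappa_{\beta,N}$ by observing that the rank-$d$ regularization only touches the null modes, so $\lambda_{\max}(\mathcal L_\beta)\geq\lambda_{\max}(\mathcal L)$. Your version avoids the pseudo-inverse machinery and makes explicit that the natural constant is $\lambda_{\max}(\mathcal L)$, with $\kappa_{\beta,N}$ entering only as an upper bound; the paper's version buys an equality in the $\mathcal L_\beta^{-1}$-weighted norm, which is the form that carries over to the preconditioned estimate \eqref{eq:psilaplacian} used in Proposition \ref{prop:expconv_stoch_dual}. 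One caveat you share with the paper: as literally written, \eqref{eq: Rayleigh quotient Reg Laplacian} gives $\kappa_{\beta,N}=\|\mathcal L_\beta\|_2^2=\lambda_{\max}(\mathcal L_\beta)^2$, so your step $\kappa_{\beta,N}\geq\lambda_{\max}(\mathcal L)$ (like the paper's assertion $\mathcal L_\beta\preceq\kappa_{\beta,N}I$) implicitly reads $\kappa_{\beta,N}$ as $\lambda_{\max}(\mathcal L_\beta)$, which is only automatic when $\lambda_{\max}(\mathcal L_\beta)\geq 1$; this is best viewed as a typo in the definition rather than a gap in your argument, but it is worth flagging.
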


\subsubsection{Mirror Maps}\label{sec:mirror maps}
In this section we state our assumption for the two mirror maps $\Phi$ and $\Psi$. 
These assumptions are fairly standard in the literature of the mirror descent algorithm, see for example \cite[Chapter 9]{beck17book}. 
We also state some identities and relationships that will be useful later on.

\begin{assumption}[Mirror map]\label{ass:mirror1}
$\Phi:\mathcal D\rightarrow \mathbb R$ is proper, twice differentiable, $\mu_\Phi$-strongly convex and $L_\Phi$-smooth w.r.t. some norm $||\cdot||$. The same holds for $\Psi:\Lambda\rightarrow\mathbb R$ with constants $\mu_\Psi$ and $L_\Psi$. respectively.
We furthermore make the additional assumption 
$\nabla\Phi^*(\mathbb{R}^d)=\mathcal{X}$,  $\nabla\Psi^*(\mathbb{R}^d)=\Lambda$, $\Phi^*$ is $L_{\Phi^*}$-smooth and assume uniform boundedness of the Laplacians of $\Phi^*,\Psi^*$ such that $||\Delta\Phi^*||_\infty,||\Delta\Psi^*||_\infty<\infty$.
\end{assumption}
The assumption that $\nabla\Phi^*$ maps directly to $\mathcal{X}$ (and similarly for $\Psi$) avoids the need for projections. Extending our results without this assumption is possible by following a route similar to \cite{mertikopoulos18}. A common mirror map that satisfies our assumptions for the primal variables is the negative entropy $\Phi(x)=\sum_{i=1}^d x_i\ln(x_i)$.
For the Lagrange multipliers a $Q$-norm may be more appropriate i.e. $\Psi(\lambda)=\frac12\lambda^\top Q\lambda$.
The assumptions that  $||\Delta\Phi^*||_\infty$, and $||\Delta\Psi^*||_\infty$ are bounded from above are needed in order to apply It\^o's lemma to the Lyapunov function we will analyze in \ref{sec: convergence}.

A useful property of the Bregman divergence induced by mirror maps that satisfy our assumptions is the following,
\begin{equation}
D_{\Phi^*}(z,z')=D_\Phi(x',x),\label{eq:breg7}
\end{equation}
where $z=\nabla\Phi(x)$ and $z'=\nabla\Phi(x')$.
For $x,y,z\in\mathbb{R}^d$ we have the triangle property for Bregman divergences (see \cite[Lemma 9.11]{beck17book})
\begin{align}\label{eq:triangle_bregman}
    \langle x-y, \nabla\Phi(z)-\nabla\Phi(y)\rangle = D_\Phi(x,y)+D_\Phi(y,z)-D_\Phi(x,z).
\end{align}
% We will at times use the assumption, \ana{i'd like to not use this... change the proof for first-order again?}
% \begin{align}
%     D_{\Phi^*}(\bar z_t^N,z^*)-D_{\Phi^*}(z_t^i,z^*)\leq ||\bar z_t^N-z_t^i||_*.
% \end{align}
We also make use of the following property,
\begin{equation}\label{eq:symmetry_Bregman}
D_f(\bx,\bx')\leq \alpha(\Phi) D_\Phi(\bx',\bx),
\end{equation}
where $\alpha(\Phi)= \frac{L_fL_\Phi}{\mu_\Phi}$. This property follows from the relative smoothness assumption combined with the strong convexity and Lipschitz assumption on $\Phi$,
\begin{align}
D_f(\bx,\bx')\leq L_f D_\Phi(\bx,\bx')\leq\frac{ L_fL_\Phi}{2}\|\bx'-\bx\|^2
\leq \frac{ 2 L_fL_\Phi}{2\mu_\Phi}D_\Phi(\bx',\bx),
\end{align}
where we used the notation $\Phi(\bx)=\sum_{i=1}^N\Phi(x^i)$.
% In order to deal with stochasticity in the algorithm, we will make one final assumption on the mirror map. This assumption will be used to bound the noise terms.
% \begin{assumption}[Self-concordance]\label{ass:concord}
% We assume that $\Phi:\mathcal{X}\rightarrow\mathbb{R}^d$ is a self-concordant function. 
% \end{assumption}
We will use the following Rayleigh quotient,
\begin{equation}\label{eq: Rayleigh quotient l2}
\kappa_N = \max_{\mathbf d_x,\mathbf d_\lambda\in\mathbb{R}^{Nd}\setminus\{0\}} \ 
\max\left\{\frac{\|\cL^\frac12 \mathbf d_x\|^2_2}{\|\mathbf d_x\|^2},\frac{\|\cL^\frac12 \mathbf d_\lambda\|^2_2}{\|\mathbf d_\lambda\|^2}\right\}.
\end{equation}
Note that the norms for $\mathbf d_x$ and $\mathbf d_\lambda$ in the definition above may be different (see Remark \ref{remark:mixed norms}).
%
% Furthermore we define, \ana{do we need this?}
% \begin{equation}\label{eq: Rayleigh quotient}
% \kappa_l = \max_{d_x,d_\lambda\in\mathbb{R}^{Nd}} \ \frac{\|\cL^\frac12 d_x\|_2^2+\|\cL^\frac12 d_\lambda\|_2^2}{\|d_x\|^2+\|d_\lambda\|^2}
% \end{equation}
We will also make use of following generalized Rayleigh quotient,
\begin{equation}\label{eq: General Rayleigh quotient}
\kappa_g = \inf_{\bx\in\mathcal{X}^N,\mathbf d_x,\mathbf d_\lambda\in\mathbb{R}^{Nd}\setminus\{0\}} 
\ \frac{\|A(\bx)[\mathbf d_x ^T \   \mathbf d_\lambda^T]^\top\|^2
_{\nabla^2 \Phi^*(\bz)}}{\|\mathbf d_x\|^2+\|\mathbf d_\lambda\|^2},
\end{equation}
where $A(\bx)=[\nabla^2f(\bx)+\mathcal L , \mathcal L]\in\mathbb R^{Nd\times 2Nd}$ and the notation $[X,Y]$ is used to denote a concatenation of matrices.
%If strong convexity holds then $\kappa_g$ is strictly positive.
%
\begin{lemma}\label{lemma: postive k_g}
Suppose Assumptions \ref{ass:f} to \ref{ass:mirror1} hold and that $f$ is relatively strongly convex with respect to $\Phi$, then $\kappa_g$ defined in \eqref{eq: General Rayleigh quotient} is positive. 
\end{lemma}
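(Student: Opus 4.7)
The overall strategy is to use the mirror-map assumptions to replace the weighted norm by an ordinary Euclidean norm, then exploit the relative strong convexity hypothesis to turn $\nabla^2 f(\bx)$ into a quantitatively positive definite matrix, and finally close the argument by a compactness/continuity reasoning that is uniform in $\bx$.

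First, by Assumption~\ref{ass:mirror1} the mirror map $\Phi$ is $L_\Phi$-smooth, so its Legendre conjugate $\Phi^*$ is $L_\Phi^{-1}$-strongly convex; this gives $\nabla^2\Phi^*(\bz)\succeq L_\Phi^{-1}I$ for every $\bz$, and hence
\begin{equation*}
\bigl\|A(\bx)[\mathbf d_x^\top,\mathbf d_\lambda^\top]^\top\bigr\|^2_{\nabla^2\Phi^*(\bz)}\;\geq\;\frac{1}{L_\Phi}\,\bigl\|(\nabla^2 f(\bx)+\mathcal L)\mathbf d_x+\mathcal L\mathbf d_\lambda\bigr\|_2^2.
\end{equation*}
It therefore suffices to show that the Euclidean Rayleigh quotient on the right-hand side is bounded below by a positive constant that does not depend on $\bx$.

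Second, I would combine the relative $\mu$-strong convexity of $f$ with respect to $\Phi$ and the $\mu_\Phi$-strong convexity of $\Phi$ to deduce $\nabla^2 f(\bx)\succeq \mu\,\mu_\Phi\, I =:\mu_f I$ uniformly in $\bx$, and since $\mathcal L\succeq 0$ this gives $\nabla^2 f(\bx)+\mathcal L\succeq \mu_f I$. For the $\mathbf d_\lambda$ direction, Assumption~\ref{ass:graph} together with Lemma~\ref{lemma: bound laplacian} implies that $\mathcal L$ has a finite-dimensional kernel (the consensus directions) and is bounded below on its orthogonal complement by $\|\mathcal L\mathbf d_\lambda\|_2\geq\sigma_+\|\mathbf d_\lambda\|_2$, where $\sigma_+>0$ is the smallest nonzero eigenvalue of $\mathcal L$. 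Because the Lagrange multiplier $\blambda$ enters the Lagrangian only through $\mathcal L\blambda$, the infimum in \eqref{eq: General Rayleigh quotient} must be interpreted modulo $\ker\mathcal L$, i.e.\ restricted to $\mathbf d_\lambda\in(\ker\mathcal L)^\perp$, otherwise the short-fat shape of $A(\bx)$ immediately forces $\kappa_g=0$ on $(\mathbf 0,\ker\mathcal L)$.

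Third, to finish I would argue by compactness. On the restricted unit sphere $\{\|\mathbf d_x\|^2+\|\mathbf d_\lambda\|^2=1,\ \mathbf d_\lambda\in(\ker\mathcal L)^\perp\}$ the quantity $\|(\nabla^2 f(\bx)+\mathcal L)\mathbf d_x+\mathcal L\mathbf d_\lambda\|_2^2$ is continuous in $(\mathbf d_x,\mathbf d_\lambda)$ and in the Hessian $\nabla^2 f(\bx)$; the latter is confined to the compact matrix set $\{M:\mu_f I\preceq M\preceq L_f I\}$ by strong convexity and $L_f$-smoothness. The cases $\mathbf d_x=0$ and $\mathbf d_\lambda=0$ are handled separately using $\mu_f I\preceq \nabla^2 f(\bx)+\mathcal L$ and the lower bound on $\mathcal L$ restricted to $(\ker\mathcal L)^\perp$, respectively. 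Continuity on the compact product then yields a positive infimum, which combined with the first-step reduction gives $\kappa_g>0$. The main obstacle is precisely the one flagged in the statement: because $A(\bx)$ is not square, the kernel of $\mathcal L$ must be removed from the $\mathbf d_\lambda$ direction, and verifying that cross-cancellations between $(\nabla^2 f+\mathcal L)\mathbf d_x$ and $\mathcal L\mathbf d_\lambda$ cannot make the numerator vanish (even for parameter choices that saturate the coupled inequalities obtained by pairing the identity with $\mathbf d_x$ and $\mathbf d_\lambda$) is the most delicate step, where the compactness-plus-continuity reasoning is essential.
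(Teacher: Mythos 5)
Your preliminary reductions are sound: $L_\Phi$-smoothness of $\Phi$ gives $\nabla^2\Phi^*(\bz)\succeq L_\Phi^{-1}I$, and relative strong convexity of $f$ combined with $\mu_\Phi$-strong convexity of $\Phi$ gives $\nabla^2 f(\bx)\succeq\mu\mu_\Phi I$ uniformly, so it does suffice to bound the Euclidean Rayleigh quotient of $A(\bx)$ from below, and you are right that the short--fat shape of $A(\bx)$ is the crux. The gap is in your third step: restricting $\mathbf d_\lambda$ to $(\ker\cL)^\perp$ does not make the numerator positive, so the compactness-plus-continuity argument cannot close. Since $\nabla^2 f(\bx)+\cL$ is invertible, the null space of $A(\bx)$ is the $Nd$-dimensional set $\{(-(\nabla^2 f(\bx)+\cL)^{-1}\cL\mathbf d_\lambda,\ \mathbf d_\lambda):\mathbf d_\lambda\in\mathbb{R}^{Nd}\}$, and any choice $\mathbf d_\lambda\in(\ker\cL)^\perp\setminus\{0\}$ yields a nonzero vector of your restricted sphere on which the numerator vanishes. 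Concretely, with $N=2$, $d=1$, $f_i(x)=\tfrac12 x^2$ and $L=\tfrac12\left(\begin{smallmatrix}1&-1\\-1&1\end{smallmatrix}\right)$, the pair $\mathbf d_x=(1,-1)^\top$, $\mathbf d_\lambda=(-2,2)^\top\perp\ker L$ satisfies $(\nabla^2 f+\cL)\mathbf d_x+\cL\mathbf d_\lambda=0$. Verifying positivity separately on the axes $\mathbf d_x=0$ and $\mathbf d_\lambda=0$ and then invoking continuity on the compact sphere is a non sequitur: a continuous function positive on two subspaces of a compact set can still vanish elsewhere on it, and here it does.

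For comparison, the paper's own proof takes a different and much shorter route: it views $A(\bx)$ as the first $Nd$ rows of the bordered matrix $B(\bx)=\left(\begin{smallmatrix}\nabla^2 f(\bx)+\cL&\cL\\-\cL&0\end{smallmatrix}\right)$, notes that the skew structure kills the cross terms so that $\inp{\bd}{B(\bx)\bd}=\bd_x^\top\nabla^2 f(\bx)\bd_x$ up to the Laplacian term, and then invokes singular-value interlacing to pass a lower bound from $B(\bx)$ down to $A(\bx)$ without any compactness machinery. You should be aware, though, that this route runs into exactly the obstruction you flagged: $\ker B(\bx)=\{0\}\times\ker\cL\neq\{0\}$, so interlacing only delivers $\sigma_{\min}(A)\geq\sigma_{\min}(B)=0$. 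In short, your instinct that the kernel must be excised is correct, but $(\ker\cL)^\perp$ is not a sufficient excision: a positive constant can only hold after restricting $(\mathbf d_x,\mathbf d_\lambda)$ away from the full $Nd$-dimensional null space of $A(\bx)$ (for example, to the particular error directions that actually arise in Lemma \ref{lemma:bound on lagrangian and mirror map v1}), and neither your argument nor a straightforward refinement of the compactness step supplies such a restriction.
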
 
This result is not obvious since $A(\bx)$ is not a square matrix, and the norm used in the definition of \eqref{eq: General Rayleigh quotient} is not standard; a short proof is provided in \ref{app:auxiliaryresults}.

\revision{
Lastly, we will need the following relationship between the gradient of the Augmented Lagrangian and the two mirror maps.
}
\begin{lemma}\label{lemma:bound on lagrangian and mirror map v1}
Suppose Assumptions \ref{ass:f} to \ref{ass:mirror1} hold. Then for an arbitrary optimal primal dual pair $(\bx^\star,\blambda^*)$ we have
\begin{align}
\begin{split}
\|\nabla f(\bx)+\cL\blambda+\cL \bx\|_{\nabla\Phi^*(\bz)} 
&\geq \frac{2\kappa_g}{\hat\mu}
\left(\sum_{i=1}^N D_\Phi(x^\star,x^i)+D_\Psi(\lambda^\star,\lambda^i)
\right)\\
&= \frac{2\kappa_g}{\hat\mu}\left(\sum_{i=1}^N D_{\Phi^*}(z^i,z^\star)
+D_{\Psi^*}(\mu^i,\mu^\star)\right),
\end{split}
\end{align}
where $\hat\mu=\min\{\mu_\Phi,\mu_\Psi\}$
\end{lemma}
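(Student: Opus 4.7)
The strategy is to reduce the vector inside the norm on the left-hand side to a linear operator, of exactly the form $A(\cdot)$ appearing in the definition of $\kappa_g$ in \eqref{eq: General Rayleigh quotient}, acting on the error vector $[(\bx-\bx^\star)^\top,(\blambda-\blambda^\star)^\top]^\top$. Once in that form, the generalized Rayleigh-quotient lower bound from \eqref{eq: General Rayleigh quotient} (whose positivity is guaranteed by Lemma \ref{lemma: postive k_g}) gives a bound in squared Euclidean norms, and the remaining step is to translate those norms into Bregman divergences using the properties of the mirror maps in Assumption \ref{ass:mirror1}. The equivalence between the two right-hand sides is then a direct consequence of the Bregman conjugacy identity \eqref{eq:breg7}.

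First I would invoke the first-order optimality conditions of Section \ref{sec:optimilaity conditions}: since $\bx^\star$ is in the interior of $\mathcal X$ (Assumption \ref{ass:f}), $N_{\mathcal X}(\bx^\star)=\{\mathbf 0\}$ and therefore $\nabla f(\bx^\star)+\cL\blambda^\star=\mathbf 0$, while consensus at optimality yields $\cL\bx^\star=\mathbf 0$. Subtracting these zero terms,
\[
\nabla f(\bx)+\cL\blambda+\cL\bx \;=\; \bigl(\nabla f(\bx)-\nabla f(\bx^\star)\bigr)+\cL(\blambda-\blambda^\star)+\cL(\bx-\bx^\star).
\]
Twice-differentiability of $f$ (Assumption \ref{ass:f}) allows the mean value theorem, so $\nabla f(\bx)-\nabla f(\bx^\star)=\bar H(\bx-\bx^\star)$ for an averaged Hessian $\bar H=\int_0^1\nabla^2 f(\bx^\star+s(\bx-\bx^\star))\,ds$. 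The right-hand side then becomes $A(\bxi)[(\bx-\bx^\star)^\top,(\blambda-\blambda^\star)^\top]^\top$ with $A(\bxi)=[\bar H+\cL,\,\cL]$, matching the matrix inside the definition of $\kappa_g$.

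Applying the infimum bound from \eqref{eq: General Rayleigh quotient} gives
\[
\|\nabla f(\bx)+\cL\blambda+\cL\bx\|^2_{\nabla^2\Phi^*(\bz)} \;\geq\; \kappa_g\bigl(\|\bx-\bx^\star\|^2+\|\blambda-\blambda^\star\|^2\bigr).
\]
I would then convert the squared Euclidean norms into Bregman divergences using the smoothness/strong-convexity of the mirror maps given in Assumption \ref{ass:mirror1}, obtaining an inequality of the shape $\|x^\star-x^i\|^2\geq c\,D_\Phi(x^\star,x^i)$ (and similarly for $\Psi$), summing over $i$, and collecting constants into the single prefactor $\hat\mu^{-1}=\min\{\mu_\Phi,\mu_\Psi\}^{-1}$. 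The equality with $\sum_i\bigl(D_{\Phi^*}(z^i,z^\star)+D_{\Psi^*}(\mu^i,\mu^\star)\bigr)$ then follows termwise from the Bregman conjugacy identity \eqref{eq:breg7}.

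The main obstacle is the double appearance of $\bx$: once through the averaged Hessian $\bar H$ (equivalently $A(\bxi)$), and once through the weighting matrix $\nabla^2\Phi^*(\bz)$ of the norm. Since \eqref{eq: General Rayleigh quotient} is an infimum over $\bx,\bd_x,\bd_\lambda$ jointly, the bound must be applied at the mean-value point $\bxi$, and the uniform boundedness of $\Delta\Phi^*$ in Assumption \ref{ass:mirror1} together with the Lipschitzness of $\nabla\Phi$ is needed to transfer the Hessian weighting from $\nabla\Phi(\bxi)$ back to $\bz=\nabla\Phi(\bx)$. Making sure the constants combine so that the final prefactor is exactly $2\kappa_g/\hat\mu$, rather than a product of smoothness/strong-convexity constants, is the most delicate bookkeeping step of the proof.
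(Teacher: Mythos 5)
Your proposal follows essentially the same route as the paper's proof: use the optimality conditions $\nabla f(\bx^\star)+\cL\blambda^\star=0$ and $\cL\bx^\star=0$ to recentre, apply the mean value theorem to write the result as $A(\by)$ acting on the primal--dual error, invoke the generalized Rayleigh quotient $\kappa_g$, convert to Bregman divergences via strong convexity of the mirror maps, and finish with the conjugacy identity \eqref{eq:breg7}. The subtlety you flag --- that $A$ is evaluated at the mean-value point while the weighting $\nabla^2\Phi^*(\bz)$ is evaluated at $\bz=\nabla\Phi(\bx)$, so the infimum in \eqref{eq: General Rayleigh quotient} must be read as ranging over these points independently --- is a real one that the paper's own proof passes over silently, so your attention to it is warranted rather than a defect of your argument.
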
 
\revision{The lemma above suggests that making progress towards reducing the norm of the gradient of the Augmented Lagrangian also means that we will be getting closer to the optimal solution. This is not surprising and it is a generalization of a similar inequality for the gradient descent method, see e.g. \cite[Chapter 2]{nesterov2018lectures}}

\subsection{Convergence Analysis}\label{sec: convergence}
Ler $(\bx^\star,\blambda^\star)=(\mathbf 1_N\otimes x^\star,\mathbf 1_N\otimes\lambda^\star)$ be a point that satisfies the first order optimality conditions for \eqref{eq:distoptobj0}. Note that when $\sigma=0$, $(\bx^\star,\blambda^\star)$ is also a fixed  point of \eqref{eq:alm continious sde prec}. In the case that $f$ is only convex and thus multiple minimizers might exist, then we define the optimal primal dual pair, $(x^\star,\lambda^\star)$ as follows,
\begin{equation}\label{eq:define optimal pair}
(x^\star,\lambda^\star)=\argmin_{x,\lambda\in(X^\star,\Lambda^\star)}\left\{
D_\Phi(x,x_0)+D_\Psi(\lambda,\lambda_0)\right\},
\end{equation}
where $x_0,\lambda_0$ are the initial point of the algorithm. 

The connection of the dynamics of \eqref{eq:alm continious sde prec} and the Augmented Lagrangian is key to the convergence analysis below. The analysis of the algorithm in \eqref{eq:alm continious sde prec} is based on the following Lyapunov function,
\begin{equation}\label{eq:laypunov exact primal v2}
V(\bx,\blambda)=c(V_1(\bx)+V_2(\blambda))+V_3(\bx,\blambda),
\end{equation}
where,
\begin{align}\label{eq:lyapunov v1_3}
&V_1(\bx)=\sum_{i=1}^N D_\Phi(x^\star,x^i),\;\; V_2(\blambda)=\sum_{i=1}^N D_\Psi(\lambda^\star,\lambda^i),\\
&V_3(\bx,\blambda)=D_f(\bx,\bx^\star)+\inp{\bx-\bx^\star}{\cL(\blambda-\blambda^\star)}+
\frac12\|\cL^{\frac12}\bx\|^2_2,
\end{align}
and $c>0$ will be specified below.
\revision{
The role of $V_1$ and $V_2$ are clear, they measure the distance between the point $(\bx,\blambda)$ and the optimal Karush–Kuhn–Tucker (KKT) primal-dual pair $(\bx^\star,\blambda^\star)$.
The role of $V_3$ becomes more obvious once re-arranged as follows,
\[
V_3(\bx,\blambda)=L(\bx,\blambda)-L(\bx^\star,\blambda^\star).
\] 
Therefore $V_3$ is also a measure of optimality since $V_3(\bx^\star,\blambda^\star)=0$. Unfortunately, neither $V_1+V_2$ or $V_3$ on their own are Lyapunov functions for the stochastic dynamics in \eqref{eq:alm continious sde prec}. 
Instead the proof below balances the two terms using the constant $c$. Note that given our assumptions on $f$, $\Phi$ and $\Psi$ it follows that $V$ is a proper function (has compact sub-level sets).} 

First we establish useful upper and lower bounds on \eqref{eq:laypunov exact primal v2}.
\revision{
Note that the Lemma below is enough to establish the convergence of the algorithm but not the actual rate that $V$ dissipates. 
This is one of the reasons that the Lyapunov function in \eqref{eq:laypunov exact primal v2} is different than other works (e.g. \cite{sun2020distributed}) that establish the convergence of distributed mirror descent but not the convergence rate for $V$.
}

\begin{lemma}\label{lemma: bound lyapunov 2}
Let Assumptions \ref{ass:f}-\ref{ass:mirror1} hold. 
Then $V_t$ in \eqref{eq:laypunov exact primal v2} satisfies the following, 
\begin{description}
    \item[(I)] $V(\bx^\star,\blambda^\star)=0$.
\item[(I.1)] If $c\geq \max\{\kappa_N/\mu_\Phi,\kappa_N/\mu_{\Psi}\}$,
\begin{align}\label{eq: bound lower lyapunov 2}
V(\bx,\blambda)\geq \frac{1}{2}(\mu_{\Phi}c-\kappa_N)||\bx-\bx^\star||^2 + \frac{1}{2}(\mu_{\Psi}c-\kappa_N)||\blambda-\blambda^\star||^2\geq 0.
\end{align}
\end{description}
If in addition $f$ is $\mu_f$-strongly convex relative to $\Phi$ then:
\begin{description}
\item[(II.2)] For any $c\geq \max\{((\kappa_N-\mu_fL_\Phi)/\mu_\Phi,\kappa_N/\mu_{\Psi}\}$,
\begin{equation}\label{eq: another bound lower lyapunov 2}
V(\bx,\blambda)\geq \frac{1}{2}(\mu_{\Phi}c+\mu_fL_{\Phi}-\kappa_N)||\bx-\bx^\star||^2 + \frac{1}{2}(\mu_{\Psi}c-\kappa_N)||\blambda-\blambda^\star||^2\geq 0.
\end{equation}
\end{description}
\begin{description}
\item[(III)] Let $\hat\mu = \min\{\mu_\Phi,\mu_\Psi\}$. Then, 
\begin{equation}\label{eq: bound upper lyapunov 2}
V(\bx,\blambda)\leq \left(c+\frac{3\kappa_N+(1+\hat\mu)\alpha(\Phi)}{\hat\mu}
\right)\left( \sum_{i=1}^N D_\Phi(x^\star,x^i)+\sum_{i=1}^ND_\Psi(\lambda^\star,\lambda^i) \right).
\end{equation}
\end{description}
\end{lemma}
\begin{proof}
\color{black}
Property (I) is obvious. For (I.1) we bound $V_1, \ V_2$ using the strong convexity of $\Phi$ and $\Psi$ respectively:
\begin{equation}\label{eq:bound v1v2}
\begin{split}
V_1(\bx)&=\sum_{i=1}^N D_\Phi(x^\star,x^i)\geq \frac{\mu_\Phi}{2}\|\bx^\star-\bx\|^2, \\
V_2(\blambda)&=\sum_{i=1}^N D_\Psi(\lambda^\star,\lambda^i)\geq \frac{\mu_\Psi}{2}\|\blambda^\star-\blambda\|^2.
\end{split}
\end{equation}
We note that the convexity of $f$ implies that $D_f(\bx,\bx^\star)\geq 0$, and therefore we can bound $V_3$ as follows,
\begin{align}
%\begin{split}
V_3(\bx,\blambda)&\geq\inp{\bx-\bx^\star}{\cL(\blambda-\blambda^\star)}
\geq -\frac12(\|\cL^\frac12(\bx-\bx^\star)\|^2_2+\|\cL^\frac12(\blambda-\blambda^\star)\|^2_2)\\
&\geq -\frac{\kappa_N}{2}(\|\bx-\bx^\star\|^2+ \|\blambda-\blambda^\star\|^2),
%\end{split}
\end{align}
where in the second inequality we used \eqref{eq: bound inp} and in the third one \eqref{eq: Rayleigh quotient l2}.
Using the bound for $V_3$ above and the two bounds in \eqref{eq:bound v1v2} we obtain (I.1).

If, in addition, $f$ is strongly convex relative to $\Phi$ then,
\begin{align}
\sum_{i=1}^N D_f(x^i,x^\star)\geq \mu_f\sum_{i=1}^N D_\Phi(x^i,x^\star)
\geq\frac{\mu_f\mu_\Phi}{2}\|\bx^\star-\bx\|^2.
\end{align}
Using the preceding inequality to bound $V_3$ along with \eqref{eq:bound v1v2} we obtain the (II.2).

For the upper bound in (III) we bound the the first term in $V_3$ using the symmetry bound in \eqref{eq:symmetry_Bregman},
\begin{equation}\label{lyapunov:ub1}
\sum_{i=1}^N D_f(x_i,x^\star)\leq \alpha(\Phi)\sum_{i=1}^N D_\Phi(x^\star,x_i).
\end{equation} 
For the second term in $V_3$ we use \eqref{eq: bound inp} again and for any $\gamma>0$,
\begin{equation}\label{lyapunov:ub2}
\begin{split}
\inp{\bx-\bx^\star}{\cL(\blambda-\blambda^\star)}
&\leq \frac {\kappa_N}{2}(\gamma\|\bx-\bx^\star\|^2
+\frac1\gamma\|\blambda-\blambda^\star\|^2)\\
& \leq \frac{\kappa_N\gamma}{\mu_\Phi}
\sum_{i=1}^N D_\Phi(x^\star,x^i)+\frac{\kappa_N}{\gamma\mu_\Psi}\sum_{i=1}^N D_\Psi(\lambda^\star,\lambda^i) \\
&\leq \frac{\kappa_N}{\hat\mu}\left(\gamma\sum_{i=1}^N D_\Phi(x^\star,x^i)+\frac{1}{\gamma}\sum_{i=1}^N D_\Psi(\lambda^\star,\lambda^i) \right)\\
&\leq \frac{\kappa_N}{\hat\mu}\sum_{i=1}^N D_\Phi(x^\star,x^i)
+\frac{\kappa_N+\alpha(\Phi)}{\hat\mu}\sum_{i=1}^N D_\Psi(\lambda^\star,\lambda^i) ,
\end{split}
\end{equation}
where in the second inequality we used the relative strong convexity of $\Phi$ and for the last inequality we set %$\gamma$ to the following,
%\begin{align}
$\gamma=\frac{\kappa_N}{\kappa_N+\alpha(\Phi)} \leq 1.$
%\end{align}
Finally, for the last term in $V_3$ we use the bound from \eqref{eq: Rayleigh quotient l2}, the strong convexity of $\Phi$ and the definition of $\hat\mu$,
\begin{equation}\label{lyapunov:ub3}
\|\cL^\frac12\bx\|^2_2=\|\cL^\frac12(\bx-\bx^\star)\|^2_2\leq \kappa_N \|\bx-\bx^\star\|^2\leq \frac{2\kappa_N}{\hat\mu} \sum_{i=1}^N D_\Phi(x^\star,x^i)
\end{equation} 
Using the upper bounds for the three terms in $V_3$ we obtain the bound in \eqref{eq: bound upper lyapunov 2}.
\end{proof}
Below we state and prove the main result of this section.

\begin{proposition}[Convergence of the preconditioned dynamics in \eqref{eq:alm continious sde prec}] \label{prop:expconv_stoch_dual}\color{black}
Let Assumptions \ref{ass:f}-\ref{ass:mirror1} hold and suppose that $\kappa_g>0$ (see \eqref{eq: General Rayleigh quotient}. Consider the dynamics in \eqref{eq:alm continious sde prec}. Let the Lyapunov function $V_t$ be defined as in \eqref{eq:laypunov exact primal v2}, then
\begin{equation}\label{eq: convergace rate edmd}
\mathbb E[V_T]\leq
e^{-rT}\mathbb E[V_0]+\frac{\sigma^2}{2}\mathbb E\left[\int_0^T e^{-r(T-s)} M_s ds\right]
\end{equation}
where $V_T := V(\mathbf{x}_T,\blambda_T)$, 
\begin{align}\label{eq:rate}
r=\frac{2\kappa_g}{c\hat\mu+(1+\hat\mu)\alpha(\Phi)+3\kappa_N}>0,
\end{align}
\[
M_s=
 c\textnormal{tr}(\nabla^2\Phi^*(\bz_s))
+\left(\Delta\cdot\nabla\Phi^*(\bz_s)+\textnormal{tr}(\nabla^2\Phi^*(\bz_s)\nabla_{xx}^2 L(\bx_s,\blambda_s)\nabla^2\Phi^*(\bz_s))\right)
\]
where $\hat\mu=\min\{\mu_\Phi,\mu_\Psi\}$ and $c\geq \max\{2\kappa_{\beta,N}\mu_{\Psi},\kappa_N/\mu_\Phi,\kappa_N/\mu_\Psi\},$ and the operator $\Delta\cdot\nabla\Phi^*(\bz)$
is given by  $[\Delta\cdot \nabla \Phi^*(z^i)]_j
=\sum\limits_{k=1}\limits^{d} \partial^2_{kk}\partial_j \Phi^*(z^i), \  i=1,\ldots,N, \ j=1,\ldots,d$.

\end{proposition}

\begin{proof}\color{black}
Since $x^i=\nabla \Phi^*(z^i)$ it follows from It\^o's Lemma that,
\begin{align}
dx^i_t=\nabla^2 \Phi^*(z^i)dz^i_t+\frac12 \sigma^2 \Delta\cdot \nabla \Phi^*(z^i)dt,
\end{align}
where the $j^{th}$ element of the It\^o correction term is 
$[\Delta\cdot \nabla \Phi^*(z^i)]_j
=\sum\limits_{k=1}\limits^{d} \partial^2_{kk}\partial_j \Phi^*(z^i)$. For ease of exposition we make the following definition,
\[
M_1(t)=\frac{\sigma^2}{2}\int_0^t \textnormal{tr}(\nabla^2\Phi^*(z_s))ds-\sigma\int_0^t \inp{\nabla\Phi^*(\bz_s)-\nabla\Phi^\star(\bz^\star)}{dB_s}.
\]
Next note that $V_1$ defined in \eqref{eq:lyapunov v1_3} can be written as follows $V_1(\bx)=\sum_{i=1}^N D_\Phi^{*}(x^i,x^\star)$ and therefore,
\begin{equation}\label{eq: bound V1}
\begin{split}
    dV_1(t) &= -\inp{\nabla\Phi(\bz_t)-\nabla\Phi^\star(\bz^\star)}{\nabla f(x_t)+\cL\bx_t+\cL\blambda_t}dt+dM_1(t)\\
&= -(\inp{\bx_t-\bx^\star}{\nabla f(\bx_t)-\nabla f(\bx^\star)}+\inp{\bx_t-\bx^\star}{\nabla f(\bx^\star)+\cL\bx_t+\cL\blambda_t})dt+dM_1(t)\\
&\leq -\inp{\bx_t-\bx^\star}{\cL\bx_t+\cL(\blambda_t-\blambda^\star)}dt+dM_1(t),
\end{split}    
\end{equation}
where for the last inequality we used the convexity of $f$ and the optimality condition $\nabla f(\bx^\star)=-\cL\blambda^\star$. 
We also re-write the second equation in \eqref{eq:alm continious sde prec} in terms of $\lambda_t$,
\[
d\blambda_t=\nabla^2\Psi(\blambda_t)^{-1}\cL\bx_t
\]
Using the equation above we obtain the following expression for $V_2$ in \eqref{eq:lyapunov v1_3}, 
\begin{align}
    dV_2(t) &= \langle -\nabla^2\Psi(\blambda_t)(\blambda^\star-\blambda_t),\nabla^2\Psi(\blambda_t)^{-1}\mathcal{L}\mathbf{x}_t \rangle dt\\
    &= (\blambda_t-\blambda^\star)^T \mathcal{L} (\mathbf{x}_t-\mathbf{x}^\star)dt.
\end{align}
Adding the bounds for $V_1$ and $V_2$ we obtain,
\begin{equation}\label{eq: bound ito cv1v2}
\begin{split}
d(V_1(t)+V_2(t))&\leq
-\inp{\bx_t-\bx^\star}{\cL(\bx_t-\bx^\star)}dt + dM_1(t)
\\
&\leq -\frac{1}{\kappa_{\beta,N}}\|\cL(\bx_t-\bx^\star)\|_2^2 dt
+dM_1(t)
\end{split}
\end{equation}
where we used Lemma \ref{lemma: bound laplacian} to bound the first term. Using the definition of the Augmented Lagrangian in \eqref{eq:al difinition} we can re-write $V_3$ from 
\eqref{eq:lyapunov v1_3} as follows,
\[
V_3(\bx,\blambda)=L(\bx,\blambda)-f(\bx^\star).
\]
Therefore,
\begin{equation}\label{eq: dv3 term}
\begin{split}
dV_3(t)&=-\|\nabla_x L(\bx,\blambda)\|_{\nabla^2{\Phi^*(\bz)}}dt+\|\nabla_\lambda L(\bx,\blambda)\|_{\nabla^2{\Psi^*(\bmu)}}dt+dM_2(t),\\
& \leq -\frac{2\kappa_g}{\hat\mu}\left( D_{\Phi}(\bx^\star,\bx_t)+D_{\Psi}(\blambda^\star,\blambda_t)\right)dt
+\frac{2}{\mu_\Psi} \|\cL(\bx-\bx^\star)\|^2dt+dM_2(t)
\end{split}
\end{equation}
where the last inequality follows from \ref{lemma:bound on lagrangian and mirror map v1} and $M_2(t)$ is defined as follows,
\[
\begin{split}
M_2(t)&=\frac{\sigma^2}{2}\int_0^t \left(\Delta\cdot\nabla\Phi^*(\bz_s)+\textnormal{tr}(\nabla^2\Phi^*(\bz_s)\nabla_{xx}^2 L(\bx_s,\blambda_s)\nabla^2\Phi^*(\bz_s))\right)ds
\\
&+\sigma\int_0^t \inp{\nabla L(\bx_s,\blambda_s)}{\nabla^2\Phi^*(\bz_s)d\mathbf{B}_s} 
\end{split}
\]
Using the bounds in \eqref{eq: bound V1} and  \eqref{eq: bound ito cv1v2} we obtain,
\[
\begin{split}
dV_t&\leq -\frac{2\kappa_g}{\hat\mu}\left( D_{\Phi}(\bx^\star,\bx_t)+D_{\Psi}(\blambda^\star,\blambda_t)\right)dt
 +\left(\frac{2}{\mu_\Psi}-\frac{c}{\kappa_{\beta,N}} \right)\|\cL(\bx-\bx^\star)\|^2dt+cdM_1(t)+dM_2(t) \\
& \leq  -\frac{2\kappa_g}{\hat\mu} \left(c+\frac{3\kappa_N+(1+\hat\mu)\alpha(\Phi)}{\hat\mu}
\right)^{-1}V(\bx_t,\blambda_t)dt+ cdM_1(t)+dM_2(t)\\
&=-rV_t dt+cdM_1(t)+dM_2(t)
\end{split}
\]
where in the second inequality we used \eqref{eq: bound upper lyapunov 2} in Lemma \ref{eq:laypunov exact primal v2} and the condition $c\geq 2\kappa_{\beta,N}/\mu_\Psi$, and $r$ is defined in \eqref{eq:rate}. Taking expectations and applying Gronwall's Lemma we obtain the final result. 
\end{proof}
{\color{black}
\begin{remark} \textit{Convergence to an invariant distribution:}
    Proposition \ref{prop:expconv_stoch_dual} can be extended to establish a standard Foster-Lyapunov condition, $\mathcal{A}V_t\leq rV_t+\overline d$, where $\mathcal{A}$ is the infinitesimal generator of \eqref{eq:alm continious sde prec}. By imposing some additional (and not particularly restrictive) assumptions on $\Phi,f$ such that 
    $c\textnormal{tr}(\nabla^2\Phi^*(z_s))+\left(\Delta\cdot\nabla\Phi^*(\bz_s)+\textnormal{tr}(\nabla^2\Phi^*(\bz_s)\nabla_{xx}^2 L(\bx_s,\blambda_s)\nabla^2\Phi^*(\bz_s))\right)<\overline d$ holds, one then
     establishes the existence of an invariant distribution, \cite[Theorem 5.3]{meyn1993stability}. One could go further to establish exponential convergence to equilibrium via \cite[Theorem 6.1]{meyn1993stability}, but this would require establishing a minorization condition for irreducibility or equivalently showing controllability of the dynamics in \eqref{eq:alm continious sde prec} when the noise is replaced with a deterministic control input. This requires further assumptions on $\Phi$ and for the sake of brevity details are omitted.
\end{remark}
}
\revision{
The result in Proposition \ref{prop:expconv_stoch_dual} is quite general since it allows \emph{any} mirror map (within assumptions) in the primal and/or dual.
In the standard decentralized case we can only establish the superiority of the mirror descent method over gradient methods for specific cases.
For example the case where $\Phi$ is the negative entropy and $\mathcal X$ is the simplex, then mirror descent is known to be more efficient both in theory and in practice (see \cite[Example 9.10]{beck17book}).
Therefore in order to generate similar insights into the distributed case it is necessary to investigate some specific choices of mirror maps.
In the next section we motivate a particular preconditioner that relies on convex duality theory.
% The computation of the preconditioner we propose relies on the exact solution of the dual problem.
% Computing the solution if an optimization problem in every iteration is too expensive in many models that appear in practice. 
% For this reason we investigate a particular discretization scheme designed to overcome these computational issues.
}
 % associated with computing the gradient of the dual in \ref{sec:Discretization Analysis}.  
% The convergence of $\blambda_t$ is affected by how well-conditioned the interaction graph and objective function are. In the proof of Proposition \ref{prop:expconv_stoch_dual} we observe that the use of the mirror map $\Psi$ results in an improved rate of convergence. %and it also  allows us to work with the Bregman divergence as the Lyapunov function.
% In particular, when $\frac{\mu_\Phi}{\mu_\Psi}<1$ we end up with $r$ in Proposition \ref{prop:expconv_stoch_dual} being larger than that seen in Proposition \ref{prop:expconv_stoch_prim}. %and an improved convergence rate.
\color{black}
\subsection{Choosing the Network Preconditioner}\label{sec:choose precondtioner} 
For many interesting applications good mirror maps are known, and the advantages of mirror descent over gradient descent are well understood. 
Unfortunately, it is not clear how to select a good mirror map for the space of Lagrange multipliers. 
\revision{
However, if we restrict the mirror map to be quadratic, %i.e. with a general $Q$-norm, $\Phi(x)=\frac{1}{2}x^\top Q x$ where $Q$ is some positive-definite matrix,
then it is possible to identify one possible mirror map that works well in practice (see also the numerical results in \ref{sec:num}). 
In \ref{sec:Discretization Analysis} (see \ref{remark:convergence quadratic}) we show that if we choose the mirror map to be the Hessian of the dual of \eqref{eq:distoptobj0} and if the objective function is quadratic, then the algorithm would converge in a single iteration (when $\sigma=0$). This observation is reminiscent of the Newton method for unconstrained problems and so we propose that using the Hessian of the Lagrangian is a good choice for $\Psi$. Of course, computing the Hessian of the Lagrangian could be expensive but we argue that the extra computation associated with approximating the Hessian of the dual function could be justified in the scenario where the Laplacian matrix is ill-conditioned. 
We can back this claim exactly using the example in \ref{remark:convergence quadratic} and also using more realistic problems in \ref{sec:num}. We also note that our discretization scheme does not rely on the exact computation of the Hessian of the dual.} 

Below we briefly outline the derivation of the Hessian for the dual function in the deterministic setting and when $f$ is strongly convex. 
For simplicity we only consider the Lagrangian $f(\bx)+\inp{\blambda}{\cL\bx}$ (and not the Augmented Lagrangian). To extend for the Augmented Lagrangian the extra term $\frac12\|\cL^\frac12\bx\|_2^2$ can easily be added in the expressions below but for simplicity is omitted. 
In particular, the (negative) dual function $q(\blambda)$ is defined as follows,
\begin{equation}\label{eq:dual function}
q(\blambda)=\max_{\bx\in\mathcal{X}^N} \left\{-(f(\bx)+\inp{\blambda}{\cL \bx }) \right\},
\end{equation} 
Let $\bx(\blambda)$ be the maximizer of \eqref{eq:dual function}. We know that for the strongly convex case, $\bx(\blambda)$ is unique and the gradient of the dual function is given by ( \cite[Theorem 6.3.3]{bazaraa2013nonlinear}),
\[
\nabla_{\blambda} q(\blambda)=-\cL \bx(\blambda).
\]
Applying the KKT conditions to \eqref{eq:dual function} and differentiating with respect to $\lambda$ we also have that,
\[
\nabla^2 f(\bx(\blambda))\frac{d\bx(\blambda)}{d\blambda}+\cL=0.
\]
Using the preceding equation we obtain the following expression for the Hessian of the dual function,
\[
\nabla^2 q(\blambda)=-\cL\frac{d\bx(\blambda)}{d\blambda}=
\cL(\nabla^2 f(\bx(\blambda)))^{-1}\cL.
\]
Unfortunately, even if $f$ is strongly convex, the dual function in \eqref{eq:dual function} is only convex (and not strongly convex). So we cannot directly use the Hessian of the dual function above, and instead we proposed to use,
\[
\nabla^2\Psi(\blambda)=\cL_\beta (\nabla^2 f(\bx(\blambda)))^{-1}\cL_\beta,
\]
where $\cL_\beta$ is the $\beta$-regularized Laplacian defined in \eqref{eq:reglapl}.
Note that we can avoid inverting the Hessian of $f$ at every iteration and instead work with the convex conjugate of $\Psi$ which occurs a one-off cost of diagonalizing the regularized Laplacian,
\[
\nabla^2\Psi^*(\blambda)=\cL_\beta^{-1} (\nabla^2 f(\bx(\blambda)))\cL_\beta^{-1}.
\]
\revision{
A major computational challenge associated with the derivation above is the computation of $\bx(\blambda)$.
Unless $f$ is a particularly simple function, it is not possible to solve \eqref{eq:dual function} in closed form.
In the next section we will address this issue using a discretization approach that is similar to a Gauss-Seidel scheme. 
}

\section{Discretization Analysis}\label{sec:Discretization Analysis}\color{black}

In the convergence analysis of Section \ref{sec: convergence}, any mirror map that is compatible with Assumption \ref{ass:mirror1} can be used. 
However, this flexibility means that the dynamics in \eqref{eq:alm continious sde prec} have the potential to lead to good or bad algorithms depending upon the specific mirror map selection. 
As a result the convergence rates we obtained in the previous section are not necessarily the most tight.
To address this issue we examine the algorithm once the dynamics in \eqref{eq:alm continious sde prec} are discretized and we specifically focus on quadratic mirror maps. 
For the remainder we will restrict to \begin{align}
    \Phi(\bx)=\frac{1}{2}\bx^\top Q \bx, \quad   \Psi(\blambda)=\frac{1}{2}\blambda^\top R \blambda,
\end{align}
where $Q,R$ are positive-definite matrices. Nevertheless, the insights of the continuous time analysis obtained previously is crucial since the results of this section rely on the same Lyapunov function from Section 
\ref{sec: convergence}. 
By restricting the analysis to the case of quadratic mirror maps we will be able to identify mirror maps for both the primal and Lagrangian dual variables that lead to favorable condition numbers. 
Like all preconditioning methods, a balance needs to be struck between the computational cost of computing the preconditioner and the algorithm's convergence rate. One of the aims of the analysis in this section is to provide a detailed description of this inherent trade-off (see \ref{fig:discrete constnants} and the discussion in the last paragraph of this section).

We propose to apply an explicit Gauss-Seidel discretization scheme to \eqref{eq:alm continious sde prec},
\[
\begin{split}\label{eq: gauss-seidel zm}
\bz_k&=\bz_{k-1}-\delta(\nabla f(\bx_{k-1})+\cL\bx_{k-1}
+\mathcal L\blambda_{k-1})+\sqrt{\delta}\sigma\ \bxi_k, \\
\bmu_k&=\bmu_{k-1}+\delta\mathcal L\bx_{k},
\end{split}
\]
where $\bxi_{k}\sim N(0,I_{Nd})$. Note that we use the notation $\bx_k$ to denote the approximation to $\bx(k\delta)$, with similar notation for the rest of the variables. 
In this work, our emphasis primarily lies on employing this scheme for optimization rather than for sampling purposes. Consequently, the convergence of $\bx_k$ to $\bx(k\delta)$ or the density of $\bx_k$ are outside the scope of this paper. Note that if we were to follow the motivation discussed in \ref{sec:choose precondtioner} then we would have $\bx_k$ to be the solution of the dual problem. Instead, in our scheme below we just use a simple Gauss-Seidel scheme. The two schemes (i.e. Gauss-Seidel and exact solution of the dual) would coincide if the solution of the dual problem could be obtained via a single mirror-descent iteration. 

Let $(\bz^\star,\bmu^\star)$ denote the duals of the optimal solution defined in \eqref{eq:define optimal pair}. We introduce the following notation,
\[
\begin{split}
\hat \bz_k&=\bz_k-\bz^\star, \\
\hat \bmu_k&=\bmu_k-\bmu^\star,\\
\nabla \hat f_k&=\nabla f(\bx_k)+\cL\bx_k-(\nabla  f(\bx^\star)+\cL \bx^\star ).
\end{split}
\]
Subtracting $\bz^\star$ and $\bmu^\star$) from both sides of the first (second) equation in \eqref{eq: gauss-seidel zm}, using %the fact that the mirror maps are quadratic 
$\hat \bx_k=Q^{-1}\hat \bz_k$, $\hat \blambda_k=R^{-1}\hat \bmu_k$
and the KKT optimality conditions we obtain,
\begin{equation}\label{alg:discrete}
\begin{split}
\hat\bx_k&=\hat\bx_{k-1}-\delta Q^{-1}(
\nabla \hat f(\bx_{k-1})
+\mathcal L\hat\blambda_{k-1})+\sqrt{\delta}\sigma Q^{-1} \bxi_k, \\
\hat \blambda_k&=\hat \blambda_{k-1}+\delta R^{-1}\mathcal L\hat\bx_k.\end{split}
\end{equation}

\begin{remark}
 The assumption that the mirror maps are quadratic is not as restrictive as it might first appear.
Firstly, the consensus constraint is an equality, therefore the Lagrangian dual variables are vectors in $\R^{Nd}$. 
Therefore using a $Q$-norm for some positive definite matrix $Q$ is the natural choice.
For the Lagrangian duals any other commonly used mirror map such as the negative entropy would not lead to convergent algorithms. 
The domain of the negative entropy is the positive orthant and it would therefore violate our assumptions. 
Secondly, regarding the mirror map of the primal variables, we can extend our setting in the discretization scheme using local linearization and a first order Taylor expansion and set $Q_k=\nabla^2\Phi(\bx_{k-1})$, i.e. discretize the right-hand side of \eqref{eq:alm continious sde prec} using
\[
\nabla \Phi(\bx_k)-\nabla \Phi(\bx_{k-1})= \nabla^2\Phi(\bx_{k-1})(\bx_k-\bx_{k-1})+
O(\|\bx_k-\bx_{k-1}\|^2),
\]
and similarly for $R_k$ and $\blambda_k$. Our analysis below is for the case where $Q$ and $R$ are the same at each iteration but similar arguments can be used to extend it to the non homogeneous case. 
\end{remark}

We begin by specializing our assumptions on relatively strong convexity and relative smoothness to the case of quadratic maps.  
\begin{lemma}\label{lemma:quad map inqualities}
Suppose that $\Phi(\bx)=\frac12\bx^\top Q\bx$, where $Q$ is a positive definite matrix, then: 
\begin{description}
    \item[(i)] If $f$ is $L_f$-smooth relative to $\Phi$ then,
\begin{equation}\label{eq: smooth quad case}
\|\nabla f(\by)-\nabla f(\bx)+\cL(\by-\bx) \|^2_{Q^{-1}}
 \leq L'_f \inp{\nabla f(\bx)-\nabla f(\by)+\cL(\bx-\by)}{\bx-\by}
\end{equation}
where $L'_f=L_f+\kappa_N/\mu_\Phi$.
    \item[(ii)] If $f$ is $\mu_f$-strongly convex relative to $\Phi$ we have,
    \begin{equation}\label{eq: sc quad case}
\inp{\nabla f(\bx)-\nabla f(\by)+\cL(\bx-\by)}{\bx-\by}\geq \mu_f\|\bx-\by\|^2_{Q}
\end{equation}
\end{description}
\end{lemma}
\begin{proof}
(i) Consider the function $g(\by)=f(\by)+\|\cL^{\frac12}\by\|^2-\inp{\nabla f(\bx)+\cL \bx}{\by}$ and note that it is minimized by $\bx$. Moreover, since $f$ is relatively smooth w.r.t $\Phi$ implies that for any $\by'$,
\[
\nabla^2 f(\by')+\cL\preceq L_fQ+\kappa_NI
\preceq (L_f+\frac{\kappa_N}{\mu_\Phi})Q,
\]
where we used Assumption \ref{ass:mirror1} and $\kappa_N$ was defined in \eqref{eq: Rayleigh quotient l2}. Therefore $g$ is also relatively smooth w.r.t $\Phi$:
\[
D_g(\by',\by)\leq L'_f D_{\Phi}(\by',\by)=\frac{L'_f}{2}\|\by'-\by\|^2_Q
\]
Using the equation above at with $\by'=\by-\frac{Q^{-1}}{L'_f}(\nabla f(\by)+\cL \by)$
\[
\begin{split}
g(\bx)&\leq g(\by-\frac{1}{L'_f}Q^{-1}(\nabla f(\by)+\cL \by))
\leq g(\by)-\frac{1}{2L'_f}\|\nabla f(\by)-\nabla f(\bx)+\cL(\bx-\by) 
\|^2_{Q^{-1}}
\end{split}
\]
Where the first inequality follows from the fact that $\bx$ minimizes $g$. 
Using the definition of $g$ on the preceding equation we obtain,
\begin{align}
\inp{\nabla f(\bx)+\cL \bx}{\bx-\by}-(f(\bx)+\|\cL^{\frac12}\bx\|^2)
\geq& -(f(\by)+\|\cL^{\frac12}\by\|^2) \\
&
+\frac{1}{2L'_f}\|\nabla f(\by)-\nabla f(\bx)+\cL(\bx-\by) \|^2_{Q^{-1}}.
\end{align}
Finally, using the inequality above with the role of $\bx$ and $\by$ reversed and adding the two inequalities we obtain \eqref{eq: smooth quad case}.

(ii) For the second part we use the mean-value theorem to obtain,
\[
\nabla f(\bx)-\nabla f(\by)+\cL(\bx-\by)
=(\nabla^2 f(\bx')+\cL)(\bx-\by)
\]
for some $\bx'=\tau\bx+(1-\tau)\by$ with $0<\tau<1$. Multiplying both sides by $\bx-\by$ and using the assumption that $f$ is $\mu_f$-strongly convex relative to $\Phi$ we obtain,
\[
\begin{split}
\inp{\nabla f(\bx)-\nabla f(\by)+\cL(\bx-\by)}{\bx-\by}
&=\inp{(\bx-\by)(\nabla^2 f(\bx')+\cL)}{\bx-\by} \\
&\geq \mu_f\inp{\bx-\by}{Q (\bx-\by)}=\mu_f\|\bx-\by\|^2_{Q},
\end{split}
\] 
where for the inequality we used the fact that $\cL$ is positive semi-definite.
\end{proof}

Next we introduce two generalized Rayleigh quotients that will play a key role in our analysis and are inspired by the definitions in \eqref{eq: Rayleigh quotient l2} and 
\eqref{eq: General Rayleigh quotient}. However, because the mirror maps in this section are assumed to be quadratic (and therefore symmetric) we will be able to obtain more refined estimates with the definitions below:
\begin{equation}\label{eq:sigma upper}
\overline{\sigma}(\mathcal L)^2=\sup_{\bx\in\mathcal{X}^N} \frac{\inp{\mathcal L\bx}{R^{-1}\mathcal L \bx}}{\|\bx\|^2_{Q}},
\end{equation}
\begin{equation}\label{eq:sigma lower}
\underline{\sigma}(\mathcal L)^2=\inf_{\blambda\in\range(\mathcal L)} \frac{\inp{\mathcal L\blambda}{Q^{-1}\mathcal L \blambda}}{\|\blambda\|^2_{R}}. 
\end{equation}
In the standard setting where $Q=R=I$ (i.e. when our algorithm reduces to an exact distributed gradient descent algorithm) then $\overline{\sigma}(\mathcal L)$ is the largest eigenvalue value of $\mathcal L$ and $\underline{\sigma}(\mathcal L)>0$ the second smallest. 
However, in the framework we analyze in this paper we have the opportunity to design the algorithm by choosing the $\Phi$ and $\Psi$. 
To see the utility of the definitions above suppose that $\cL$ was invertible; (of course $\cL$ is not invertible, but for the sake of argument suppose that it was). Then if $R=\mathcal LQ^{-1}\mathcal L$ and  then 
$\underline{\sigma}(\mathcal L)=\overline{\sigma}(\mathcal L)=1$. 
Therefore, the condition number of the graph Laplacian will play no role in the convergence rate of the algorithm. 
While not possible to invert $\mathcal L$, we can achieve a similar effect by using the $\beta-$regularized Laplacian (see Remark \ref{remark:convergence quadratic} below). 
The definition of the two Rayleigh quotients above will be useful in the analysis, but the Lemma below is useful for computing these constants using standard methods.  

\begin{lemma}\label{lemma:eigs sigma bar}
\begin{description}
\item[(i)]The second smallest eigenvalue value of $R^{-\frac12}\cL Q^{-1}\cL R^{-\frac12}$ is equal to $\underline{\sigma}(\mathcal L)$. 
\item[(ii)] The largest eigenvalue value of $Q^{-\frac12}\cL R^{-1}\cL Q^{-\frac12}$ is equal to $\overline{\sigma}(\mathcal L)$.
\end{description}
\end{lemma}
\begin{proof}
(i) follows from the  change of variables $\mathbf{\kappa}=R^{-\frac12}\blambda$ and noting that the range of $\blambda\in\mathcal{L}$ excludes the possibility of $\mathcal L \blambda=0$ which would otherwise correspond to smallest eigenvalue. (ii) follows using $\bx=Q^{-\frac12}\by$.
\end{proof}
In order to be able to use the lower bound in \eqref{eq:sigma lower} we need to ensure that the $\blambda_k$ generated by the algorithm satisfies the condition 
$\blambda_k\in\range(\mathcal L)$. 
The Lemma below shows that this is easy to ensure as long as the algorithm is initialized correctly. 
\begin{lemma}\label{lemma: range space lambda}
Suppose that \eqref{alg:discrete} is initialized at $\blambda_0=0$, $\bx_0\in\mathcal{X}^N$ and $R$ has full rank. Then $\blambda_k\in\range(\mathcal L)$, $\forall k>0$.
\end{lemma}
\begin{proof}
We proceed by induction. For $\blambda_0=0$ then $\blambda_1=R^{-1}\cL\bx_1$ therefore $\blambda_1\in\range(R^{-1}\cL)$. 
Since $R^{-1}$ has full rank it follows from \cite[Corollary 4.2.12]{meyer2023matrix} that $\range(R^{-1}\cL)=\range(\cL)$.
Now suppose that $\blambda_{m-1}\in\range(\cL)$, then
\[
\blambda_{m}=\blambda_{m-1}+\delta R^{-1}\cL \bx_m=\delta R^{-1}\cL\left(\sum_{i=1}^{m-1}\bx_i+(\bx_m-\bx^\star)\right),
\]
and therefore $\blambda_{m}\in\range(\cL)$.
\end{proof}
\begin{example}\label{remark:convergence quadratic}
It is instructive to examine the case where the conditioning of the optimization model is solely due to the condition number of the graph. Suppose $\mathcal{X}=\mathbb{R}$ and $f(\bx)=\frac{\alpha}{2}\|\bx-\bf{b}\|^2$, where $\alpha>0$, and $\bx$, and $\bf{b}$ are in $\R^N$. In this case minimizing $f(\bx)$ subject to the constraint $\cL\bx=0$ has the unique solution $\frac{1}{N}\sum_{i=1}^N b_i$. Let $C=\{\bx\in\R^N~|~\cL\bx=0\}$ and let $C^{\perp}$ be its orthogonal complement. We also denote the unit vector as $e=\mathbf 1_N$ and $\Pi_{C^\perp}$ as the orthogonal projection on $C$ so that
\[
\begin{split}
\Pi_{C}&=\frac{1}{N}ee^\top \\
\Pi_{C^\perp}&=I-\frac{1}{N}ee^\top
\end{split}
\]
It follows from the above that $\cL{\Pi_{C}}=0$ and $\cL\Pi_{C^\perp}=\cL$. We will use these properties of the orthogonal projections on $C$ below. Next, suppose that $Q=\alpha I$ and $R=\cL_\beta Q^{-1}\cL_\beta$ then using Lemma \ref{lemma:eigs sigma bar} we find $\overline{\sigma}(\mathcal L)=\underline{\sigma}(\mathcal L)=1$.
If $\sigma=0$, $\blambda_0=0$ and $\delta=1$ in \eqref{alg:discrete} then the algorithm converges in one iteration. 
To see this note that,
 \[
 \begin{split}
 \bx_1&=\bx_0-(\alpha I+\cL)^{-1}((\alpha I+\cL)\bx_0-\alpha\bf{b}))=(\alpha I+\cL)^{-1}
 (\alpha\bf{b}),\\
 \blambda_1&=\cL_\beta^{-1}(\alpha I+\cL)\cL_\beta^{-1}\cL\bx_1.
 \end{split}
 \]
Starting at an arbitrary $\bx_0$, after some straightforward algebra we obtain,
\[
\begin{split}
\bx_2&=(\alpha I+\cL)^{-1}(\alpha \bf{b}-\cL\blambda_1)\\
 &=(\alpha I+\cL)^{-1}(\alpha\bf{b}-\cL\cL_\beta^{-1}(\alpha I+\cL)\cL_\beta^{-1}
 \cL(\alpha I+\cL)^{-1}(\alpha\bf{b}))\\
 &=(\alpha I+\cL)^{-1}(\alpha{\bf b}- \Pi_{C^\perp}(\alpha I+\cL)
  \Pi_{C^\perp}(\alpha I+\cL)^{-1}(\alpha{\bf b}))\\
&=(\alpha I+\cL)^{-1}(I-\Pi_{C^\perp})(\alpha{\bf b})=(\alpha I+\cL)^{-1}\Pi_{C}(\alpha{\bf b}),
 \end{split}
\]
where we used the facts that orthogonal projections are idempotent and the fact that $\cL{\Pi_{^\perp}}=\cL$. Finally we use the fact that,
\[
\begin{split}
(\alpha I+\cL)^{-1}={\alpha}^{-1}( I+{\alpha}^{-1}\cL)^{-1}
&={\alpha}^{-1}( I+{\alpha}^{-1}\cL)^{-1}(I+{\alpha}^{-1}\cL-{\alpha}^{-1}\cL)\\
&={\alpha}^{-1}I-{\alpha}^{-2}(I+{\alpha}^{-1}\cL)^{-1}\cL.
\end{split}
\]
Therefore,
\[
\bx_2={\alpha}^{-1}I-{\alpha}^{-2}(I+{\alpha}^{-1}\cL)^{-1}\cL\Pi_{C}(\alpha{\bf b}),
\]
and since $\cL\Pi_{C}(\alpha{\bf b})=0$ it follows that,
\[
\bx_2=\Pi_{C}({\bf b})=\frac1N\sum_{i=1}^N b_i.
\]
Note that if we chose $Q=R=I$ then the convergence (and step-size $\delta$) of the algorithm would depend on the condition number of the graph.
\end{example}
We are now ready to state the main result of this section.
\begin{proposition}\label{prop: discretization main}
Suppose that,
\begin{equation}\label{eq:stepsize condition}
\delta<\min\left\{\frac{1}{2L'_f},\frac{\mu_f}{\underline\sigma(\cL)^2}\right\}.
\end{equation}
Then, 
\begin{equation}\label{eq:convergance discrete}
\mathbb E[ V(\bx_k,\blambda_k)]
\leq \rho^{k} \mathbb E [ V(\bx_{0},\blambda_{0})]
+\frac{\delta\sigma^2\textnormal{tr}(Q^{-1})}{2(1-\rho)}.
\end{equation}
where,
\begin{equation}\label{eq:lyapunov discrete}
V(\bx_k,\blambda_k)=(1-\delta^2\overline\sigma(\cL)^2)D_{\Phi}(\bx_k,\bx^\star)+D_{\Psi}(\blambda_k,\blambda^\star),
\end{equation}
and $0<\rho=\max\{\rho_x,\rho_\lambda\}<1$, with $\rho_x=1+\delta^2\mu_fL'_f-\delta\mu_f$, $\rho_\lambda=1-\delta^2\underline\sigma(\cL)^2$.
\end{proposition}
\begin{proof}
To simplify notation we will use $I(\xi_k)$ to denote a random variable with zero expectation and we will omit the exact terms when they will not play a role in the estimates. 
We expand the first term in \eqref{eq:lyapunov discrete} as follows,
\[
\begin{split}
D_\Phi(\bx_k,\bx^\star)
&=\frac{1}{2}\|\hat\bx_k\|_{Q}^2 \\
&=\frac{1}{2}\|Q^{\frac12}\hat\bx_{k-1}-
\delta Q^{-\frac12}(
\nabla \hat f_{k-1}
+\mathcal L\hat\blambda_{k-1})+\sqrt{\delta}\sigma Q^{-\frac12} \bxi_k
\|^2\\
&=
\frac{1}{2}\|Q^{\frac12}\hat\bx_{k-1}-\delta Q^{-\frac12}
\nabla \hat f_{k-1}\|^2
+\frac{\delta^2}{2}\| Q^{-\frac12}\mathcal L\hat\blambda_{k-1}\|^2
\\ 
& -\delta\inp{\mathcal L\hat\blambda_{k-1}}{\hat\bx_{k-1}-\delta Q^{-1}
\nabla \hat f_{k-1}}
 +\frac{\delta\sigma^2}{2}\|Q^{-\frac12}\xi_k\|^2
+I(\xi_k).
\end{split}
\]
Similarly for the second term in \eqref{eq:lyapunov discrete} we have,
\[
\begin{split}
 &D_{\Psi}(\blambda_k,\blambda^\star)
 =\frac{1}{2}\|\hat \blambda_{k}\|^2_{R}\\
&=\frac{1}{2}\| R^{\frac12}\hat \blambda_{k-1}
+\delta R^{-\frac12}\cL\hat \bx_{k}\|^2\\
&=\frac{1}{2}\| R^{\frac12}\hat \blambda_{k-1}\|^2+
\frac{\delta^2}{2}\| R^{-\frac12}\cL\hat \bx_{k}\|
+\delta\inp{\cL\hat\blambda_{k-1}}{\hat\bx_k}\\
&=\frac{1}{2}\| R^{\frac12}\hat \blambda_{k-1}\|^2
+\frac{\delta^2}{2}\| R^{-\frac12}\cL\hat \bx_{k}\|
+\delta \inp{\cL\hat\blambda_{k-1}}
{\hat\bx_{k-1}-\delta Q^{-1}(\nabla \hat f_{k-1}+\cL\hat\blambda_{k-1})}+I(\xi_k).
\end{split}
\]
Therefore,
\begin{equation}\label{eq:bound lyapunov k}
\begin{split}
D_\Phi(\bx_k,\bx^\star)+D_{\Psi}(\blambda_k,\blambda^\star)
&=
\frac12 \|Q^{\frac12}\hat\bx_{k-1}-\delta Q^{-\frac12}
\nabla \hat f_{k-1}\|^2
-\frac{\delta^2}{2}\| Q^{-\frac12}\mathcal L\hat\blambda_{k-1}\|^2
 \\
 &+\frac12\| R^{\frac12}\hat \blambda_{k-1}\|^2
+\frac{\delta^2}{2}\| R^{-\frac12} \cL\hat\bx_{k}\|^2
+\frac{\delta\sigma^2}{2}\|Q^{-\frac12}\xi_k\|^2
+I(\xi_k).
\end{split}
\end{equation}
It follows from \eqref{eq:sigma upper}, 
\[
\overline\sigma(\cL)^2D_\Phi(\bx_k,\bx^\star)=\frac12\overline\sigma(\cL)^2\|Q^{\frac12}\hat\bx_k\|^2\geq \frac12\| R^{-\frac12}\cL\hat \bx_{k}\|^2.
\]
It follows from Lemma \ref{lemma: range space lambda} $\blambda_{k-1}\in\range(\cL)$ and therefore using \eqref{eq:sigma lower} we obtain,
\[
\underline\sigma(\cL)^2D_{\Psi}(\blambda_{k-1},\blambda^\star)=\frac12  \underline\sigma(\cL)^2\| R^{\frac12}\hat\blambda_{k-1}\|^2\leq\frac12  \|Q^{-\frac12}\cL\hat \blambda_{k-1}\|^2.
\]
We use the last two inequalities to bound \eqref{eq:bound lyapunov k} as follows,
\begin{align}\label{eq:bound lyapunov k2}
\left(1-\delta^2\overline\sigma(\cL)^2\right)D_\Phi(\bx_k,\bx^\star)+D_{\Psi}(\blambda_k,\blambda^\star)
&\leq 
\frac12 \|Q^{\frac12}\hat\bx_{k-1}-\delta Q^{-\frac12}
\nabla \hat f_{k-1}\|^2 \\
&+(1-\delta^2\underline\sigma(\cL)^2)D_{\Psi}(\blambda_{k-1},\blambda^\star)\\
&+\frac{\delta\sigma^2}{2}\|Q^{-\frac12}\xi_k\|^2
+I(\xi_k).
\end{align}
We can bound the first term in the inequality above using \eqref{eq: smooth quad case} from Lemma \ref{lemma:quad map inqualities},
\[
\begin{split}
 \frac12\|Q^{\frac12}\hat\bx_{k-1}-\delta Q^{-\frac12}
\nabla \hat f_{k-1}\|^2 
&\leq D_\Phi(\bx_{k-1},\bx^\star)+\left(\frac12\delta^2 L_f'-\delta\right)\inp{\nabla \hat f_{k-1}}{\hat\bx_{k-1}}\\
& \leq (1+\delta^2\mu_fL'_f-2\delta\mu_f)D_\Phi(\bx_{k-1},\bx^\star)\\
&=\rho_x(1-\delta^2\overline\sigma(\cL)^2)D_\Phi(\bx_{k-1},\bx^\star)
+(\rho_x\delta^2\overline\sigma(\cL)^2-\delta\mu_f)D_\Phi(\bx_{k-1},\bx^\star)\\
&\leq \rho_x(1-\delta^2\overline\sigma(\cL)^2)D_\Phi(\bx_{k-1},\bx^\star),
\end{split}
\]
where %in the first inequality we used \eqref{eq: smooth quad case} from Lemma \ref{lemma:quad map inqualities}.
in the second inequality we used \eqref{eq: sc quad case} and the step-size condition \eqref{eq:stepsize condition} that implies that $(\frac12\delta^2 L_f'-\delta)<0$.  
In the last inequality we used the fact that the step-size condition \eqref{eq:stepsize condition} means that $0<\rho_x<1$ and that the last term is negative.  
Using the bound above in \eqref{eq:bound lyapunov k2} and the definition of in \eqref{eq:lyapunov discrete},
\[
\begin{split}
V(\bx_k,\blambda_k)
&\leq \rho_x(1-\delta^2\overline\sigma(\cL)^2)D_\Phi(\bx_{k-1},\bx^\star)
+\rho_\lambda D_{\Psi}(\blambda_{k-1},\blambda^\star)+\frac{\delta\sigma^2}{2}\|Q^{-\frac12}\xi_k\|^2
+I(\xi_k) \\
&\leq \rho V(\bx_{k-1},\blambda_{k-1})+\frac{\delta\sigma^2}{2}\|Q^{-\frac12}\xi_k\|^2
+I(\xi_k)
\end{split}
\]
Taking expectation on both sides,
\[
\mathbb E [V(\bx_k,\blambda_k)]
\leq \rho \mathbb E [ V(\bx_{k-1},\blambda_{k-1})]+\frac{\delta\sigma^2\textnormal{tr}(Q^{-1})}{2}.
\]
Iterating the inequality above,
\[
\mathbb E [V(\bx_k,\blambda_k)]
\leq \rho^{k} \mathbb E [V(\bx_{0},\blambda_{0})]+\frac{\delta\sigma^2\textnormal{tr}(Q^{-1})}{2}
\sum_{i=0}^{k-1}\rho^k
\leq \rho^{k} \mathbb E [V(\bx_{0},\blambda_{0})]
+\frac{\delta\sigma^2\textnormal{tr}(Q^{-1})}{2(1-\rho)}
\]
where in the last inequality we used the fact that $\sum_{i=0}^{k-1}\rho^k\leq \sum_{i=0}^{\infty}\rho^k=1/(1-\rho)$. Finally note that the step-size condition \eqref{eq:stepsize condition} means that $0<\rho<1$
\end{proof}
The convergence rate in \eqref{eq:convergance discrete} makes explicit the effect of the condition number of the objective function ($L_f/\mu_f$) and the condition number of the graph 
($\overline\sigma(\cL)/\underline\sigma(\cL)$).
We illustrate this point in  \ref{fig:discrete constnants}. 
In \ref{fig:discrete constnants} we plot the convergence rate $\rho$ as a function of the objective function's condition number. 
We set $Q=R=I$ and plot two different cases: (a) a fully connected graph ($\overline\sigma(\cL)/\underline\sigma(\cL)=1$) and (b) a graph with a less favorable condition number ($\overline\sigma(\cL)/\underline\sigma(\cL)=3$).
When the graph is well conditioned (solid blue line) we see that $\rho$ depends only on the condition number of the objective function. However, when the graph is not fully connected (dotted red line) we see that the convergence rate is independent of the objective function properties and is dominated by the condition number of the graph. After a certain threshold (indicated by the dotted vertical line, $L_f/\mu_f\approx 4.5$), the convergence rate is again dominated by the properties of the objective function.
This example illustrates the effect of the graph's spectral properties on the convergence rate.
As discussed previously setting $Q=I$, and $R=\cL_\beta Q^{-1}\cL_\beta$ eliminates the dependence of the convergence rate to the graph's spectral properties.
Other choices for $\Phi$, e.g. $\Phi=\nabla^2f$ means that $\Psi$ has a dual job of preconditioning both the primal and Lagrangian dual variables. We note that a graph condition number of $\overline\sigma(\cL)/\underline\sigma(\cL)=3$ is not particularly high. For example an undirected  Erd{\"o}s-R{\`e}nyi graph with $N=100$ nodes and a probability of connection $0.25$ has an average condition number of approximately $3$.

\begin{figure}
\centering\includegraphics[width=0.6\textwidth]{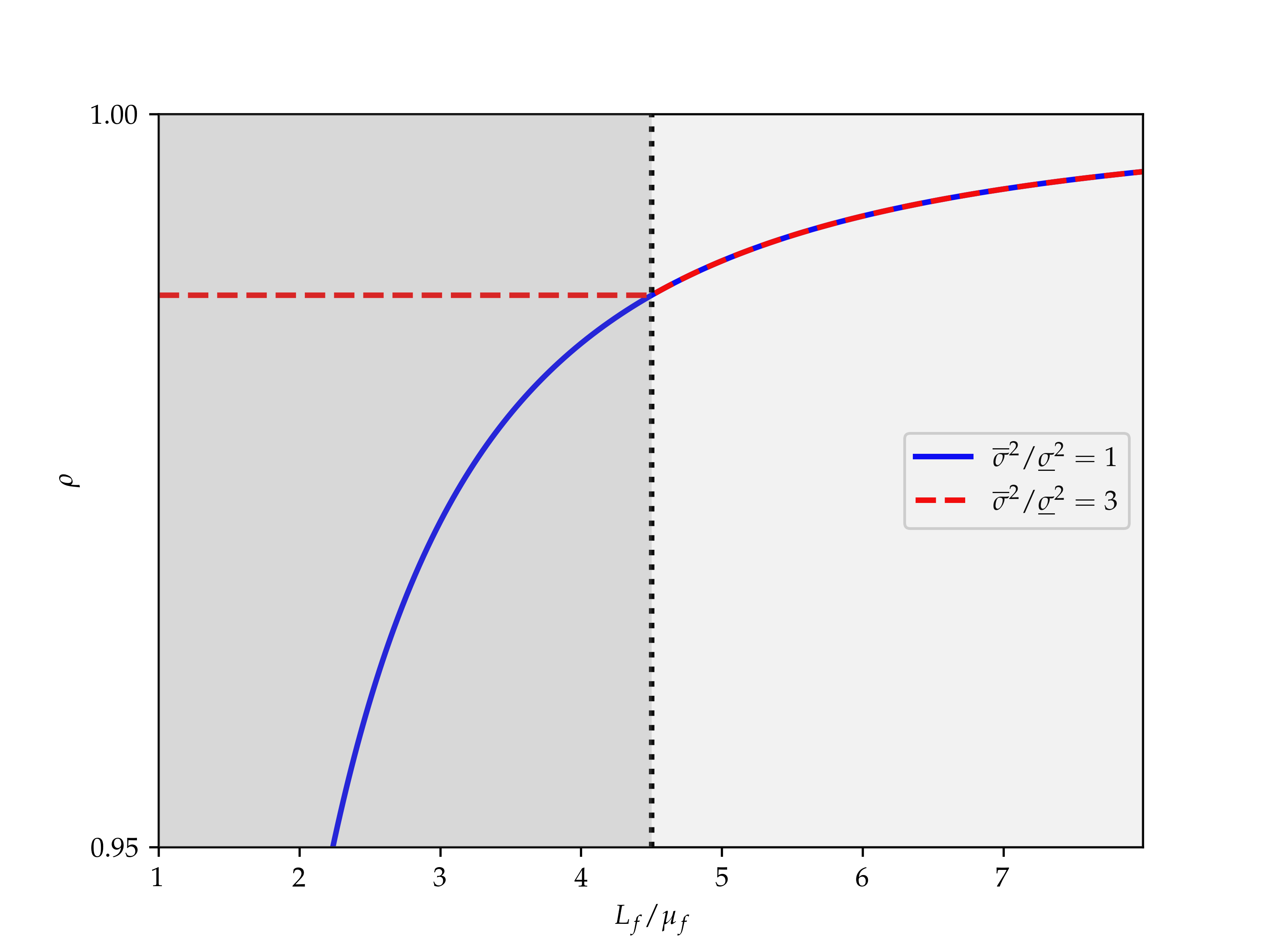}
 \caption{The algorithms convergence rate without preconditioning $Q=R=I$. 
 When the graph is well-conditioned ($\overline\sigma^2/\underline\sigma^2=1$) the convergence rate of the algorithm (solid blue line) is dominated by the problem's condition number $L_f/\mu_f$.
 When the graph is ill-conditioned ($\overline\sigma^2/\underline\sigma^2=3$) the convergence rate (dotted red line) is dominated by the spectral properties of the graph up to a certain condition number (in this case $L_f/\mu_f\approx 4.5$), after this point the convergence rate is again dominated by the model's condition number.  
 \label{fig:discrete constnants}}
\end{figure}

\begin{figure}
\captionsetup[subfigure]{justification=Centering}
\begin{subfigure}[t]{0.3\textwidth}
    \includegraphics[width=\textwidth]{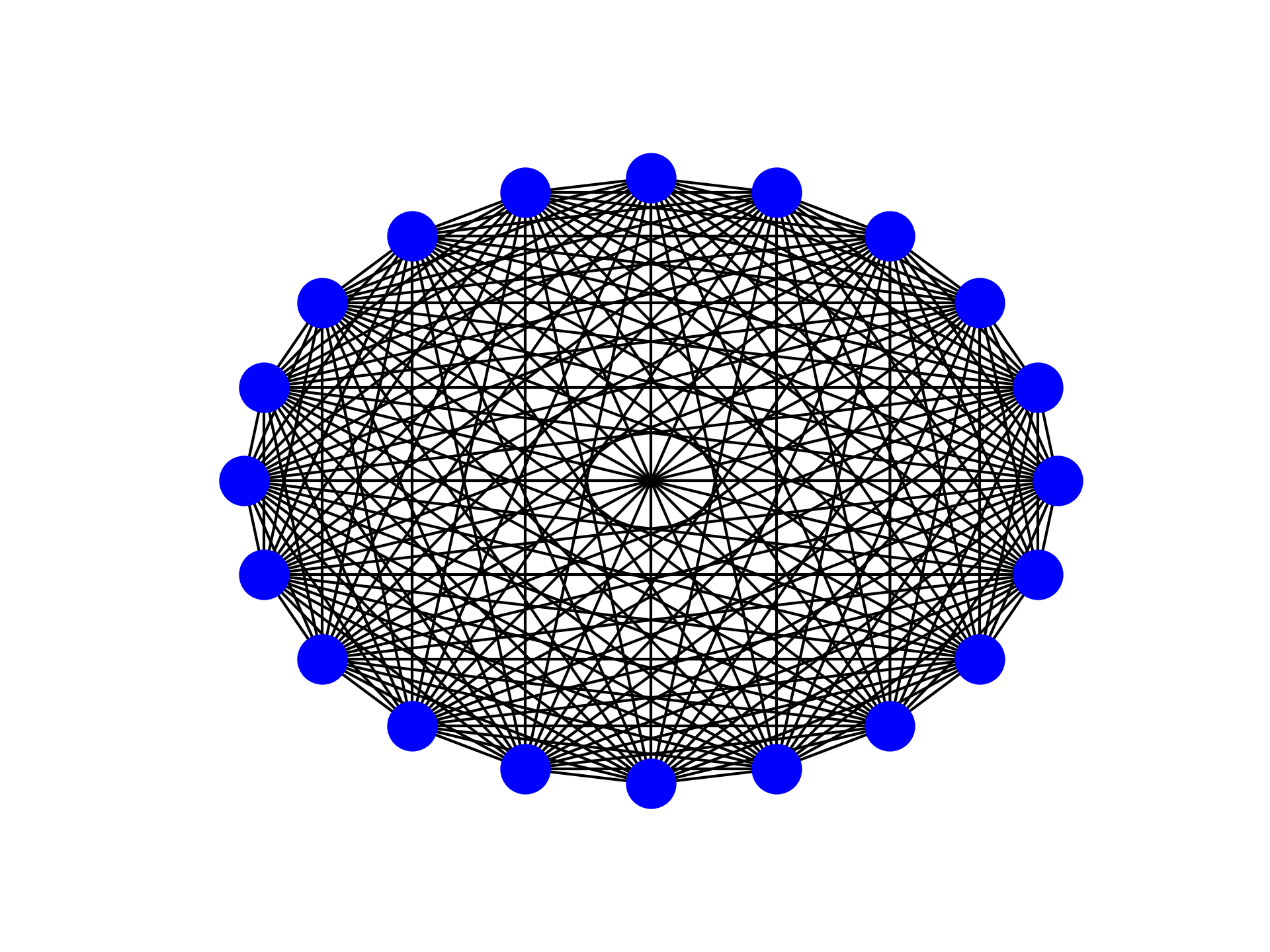}
    \caption{\label{fig:fc graph}A fully connected graph with $N=20$ nodes}
\end{subfigure}\hspace{\fill} % maximize horizontal separation
\begin{subfigure}[t]{0.3\textwidth}
    \includegraphics[width=\linewidth]{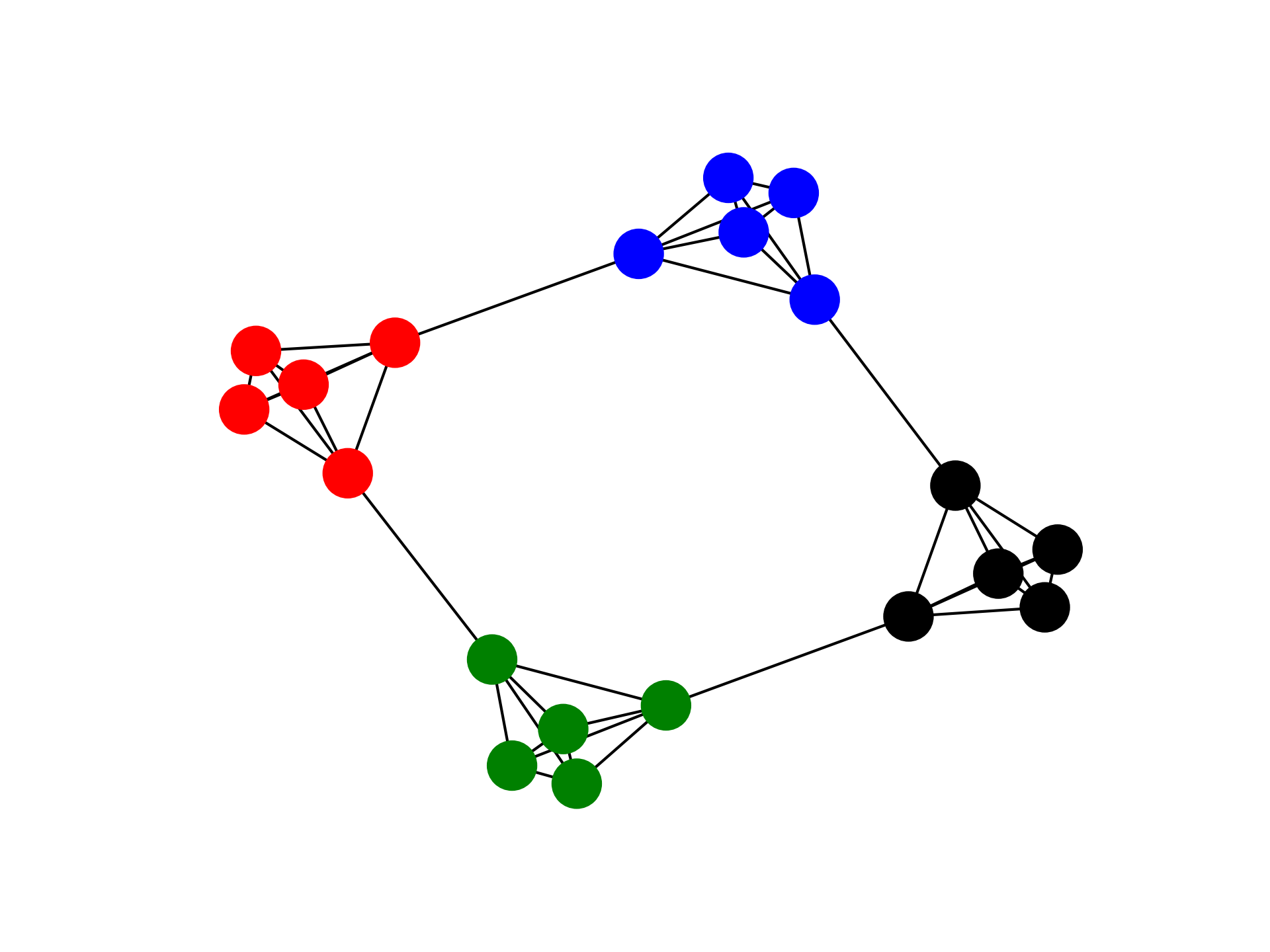}
    \caption{\label{fig:ring graph}
    A ring-of-cliques graph with $N=20$ nodes and $4$ cliques}
\end{subfigure}
\begin{subfigure}[t]{0.3\textwidth}
    \includegraphics[width=\linewidth]{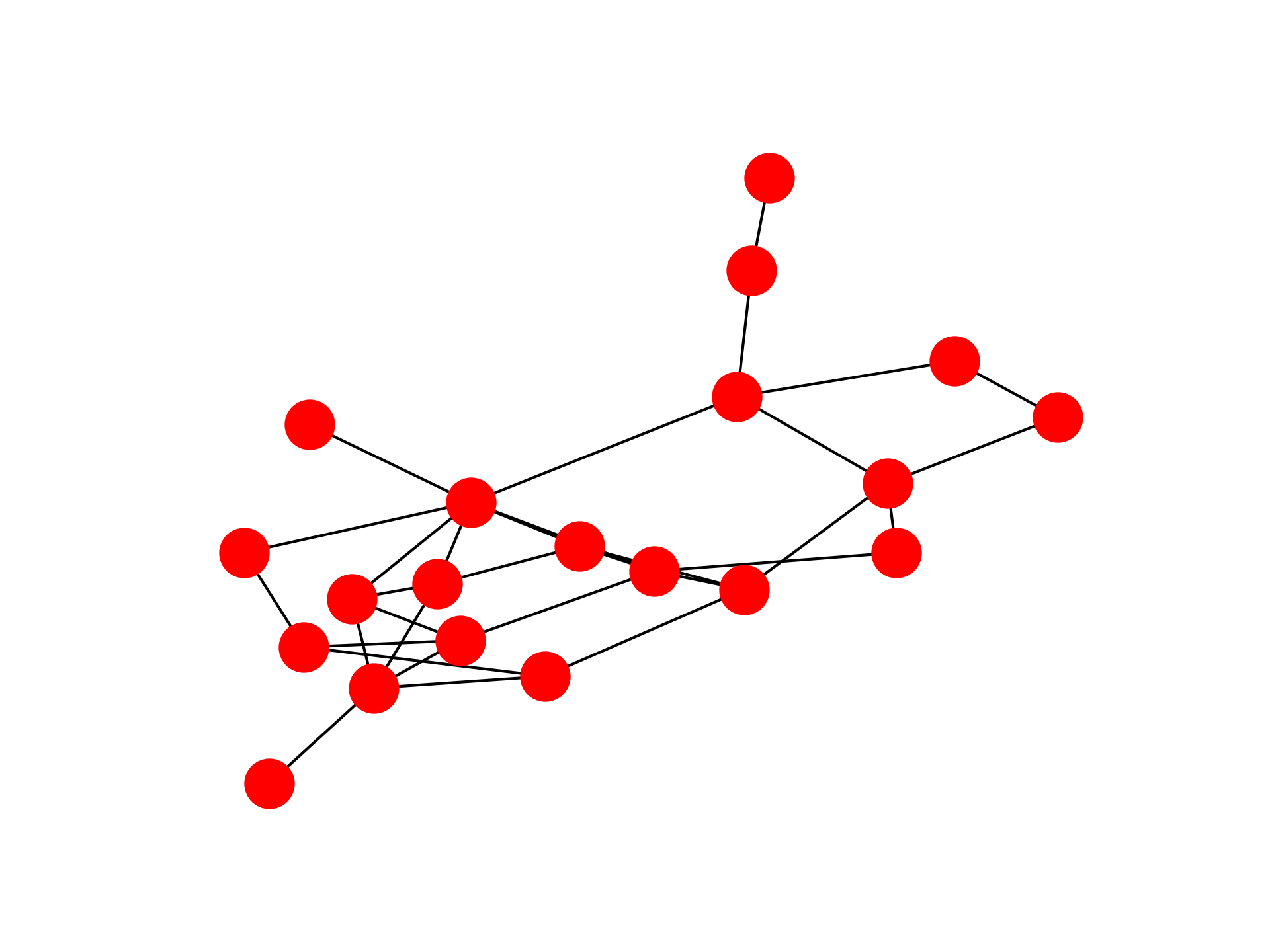}
    \caption{\label{fig:erdos graph}
    An Erd{\H o}s-R{\`e}nyi graph with $N=20$ nodes and probability of connection $p=0.2$}
\end{subfigure}
\caption{
Different types of graphs considered in the numerical experiments.
\label{fig: graphs numerical}
}
\end{figure} 

\section{Numerical Results}\label{sec:num}\color{black}
In this section we illustrate the performance of the proposed algorithm in different settings.
Our intention is to illustrate that the proposed method can be used to reduce and sometimes eliminate the impact of the graph's spectral properties.
All numerical experiments are run on a modern laptop with 64Gb memory and an 8-core CPU. 
Because we implemented several algorithms with different iteration cost we report CPU-times as opposed to the number of iterations required to find a solution.

The test problem is given by the following quadratic optimization problem,
\begin{equation}\label{eq:test problem}
\min_{x_i\in\mathcal X}\sum_{i=1}^N \|A_ix_i-b_i\|^2_2 \quad \text{s.t.} \cL\bx=0.
\end{equation}
Where $\mathcal X\subset\R^d$, $A\in\R^{m\times d}$, and $b_i\in\R^m$. 
In the simulations below we take $d=50$, $M=75$ and $N=60$.
The matrices $A_i$ are generated randomly with a condition number of $\rho=2$. 
We also generate an ill-conditioned variant of the problem in \eqref{eq:test problem} by taking the absolute value of the entries in $A_i$ and $b_i$. 
For the graph associated with the problem above we consider three different cases: (a) a fully connected graph, (b) a ring-of-cliques graph (with 5 cliques of 12 nodes) and (c) an Erd{\H o}s-R{\`e}nyi graph. 
For $N=20$ the three cases are illustrated in \ref{fig:quad_unc_cl_ill_det}. 
For the experiments below we use EPISMD$-(\nabla^2 \Phi,\nabla^2 \Psi)$ to denote how \eqref{eq: gauss-seidel zm} is implemented.
For example EPISMD$-(\nabla^2 f,I)$ denotes that we use $\nabla^2 \Phi=\nabla^2 f$ and that we do not precondition the Lagrangian dual variables.
Whenever we precondition the Lagrangian dual variables we use $R=\cL_\beta(\nabla^2f)^{-1}\cL_\beta$ with $\beta=0.0001$. 
For reasons of computational efficiency we use $R^{-1}=\cL^{-1}_\beta(\nabla^2f)\cL^{-1}_\beta$ in our implementation.
This is because we can compute $\cL^{-1}_\beta$ using a one-off cost of an eigenvalue decomposition of an $N$ dimensional matrix. The Hessian of $f$ can be computed at a cost of $O(Nd^2)$.
Assessing the efficiency or relative benefit of computing the additional preconditioner is problem dependent, but we note this is only computed once at initialization. As will be illustrated in the numerical examples below, when the graph is ill-conditioned then it can improve the performance by orders of magnitude. 
We also note that there are a lot of ways to improve the way we compute the preconditioner. 
For example, we could use quasi-Newton style methods, statistical preconditioning (see \cite{hendrikx2020statistically}), multilevel methods (e.g. \cite{tsipinakis2021multilevel}) and specialized techniques for solving equations involving the Laplacian (see \cite{MR3071502}).

Below we compare several algorithms with constant step-sizes. In order to be fair we tuned the step-size of all algorithms separately for best performance. Only the Distributed Projected Gradient Descent (DPGD) algorithm uses a diminishing step-size strategy, and we used the schedule described in \cite{MR3680426}. The exact solution was computed using CVXPY \cite{diamond2016cvxpy}.

\subsection{The Unconstrained Case}\label{sec:num unconstrained}
In the first set of numerical experiments we consider the unconstrained case, i.e. we take $\mathcal X=\R^d$ in \eqref{eq:test problem}.
Since the problem is unconstrained we compare against the state-of-the-art for unconstrained problems. 
In our case we compare against the Distributed Stochastic Gradient Tracking (DSGT) method from \cite{pu2021distributed}.
DSGT is exact, has a linear convergence rate and can handle noisy gradient evaluations. In \cite{pu2021distributed} it was compared against other algorithms and was found to outperform many of them. 
We also implemented the Distributed Augmented Lagrangian Method (DALM) analyzed in \cite{mateos2014p}.
DALM is a good baseline to compare against since in the unconstrained case our algorithm reduces to the DALM if we take $Q=R=I$. 
Therefore any improvement against DALM can be attributed to our choice of mirror maps. 

In our first experiment we consider a fully connected network, $\sigma=0$, and a well conditioned problem. 
As can be seen from \ref{fig:quad_unc_fc_rho2_det} all algorithms converge to the same solution, any small variations are due to rounding errors.
In our second experiment we change the condition number of the problem as described above. 
In \ref{fig:quad_unc_fc_ill_det} we observe that EPISMD$-(\nabla^2 f,I)$ and EPISMD$-(\nabla^2 f,\nabla^2\Psi)$ outperform the other methods.
This observation is not surprising since the problem is ill-conditioned and EPISMD$-(\nabla^2 f,I)$ and EPISMD$-(\nabla^2 f,\nabla^2\Psi)$ precondition the primal variables.
We also see a small advantage of EPISMD$-(\nabla^2 f,\nabla^2\Psi)$. This is because $\nabla^2\Psi$ also contains information from the Hessian of $f$.  
In our next set of experiments we analyze the impact of using a ring-of-cliques graph (we used twelve cliques with five nodes in each clique).
In \ref{fig:quad_unc_cl_rho2_det} we plot the results for the case where the problem is well-conditioned and in \ref{fig:quad_unc_cl_ill_det} we plot the case where the problem is also ill-conditioned. 
It is clear from these two figures that the impact of the graph's spectral properties is significant for all methods except for EPISMD$-(\nabla^2 f,\nabla^2\Psi)$.
This is of course to be expected given the results of the previous section. 
When the problem is unconstrained EPISMD$-(\nabla^2 f,\nabla^2\Psi)$ pre-conditions both the primal and dual variables and can therefore mitigate ill-conditioning that originates from either the model, the graph or both. Note that in the experimental setup of this section EPISMD$-(\nabla^2 f,\nabla^2\Psi)$ is a preconditioned version of DALM. 
Therefore it would be interesting to develop a variant of DSGT that also adapts to the geometry of the graph's Laplacian. 
\begin{figure}
\captionsetup[subfigure]{justification=Centering}
\begin{subfigure}[t]{0.45\textwidth}
    \includegraphics[width=\textwidth]{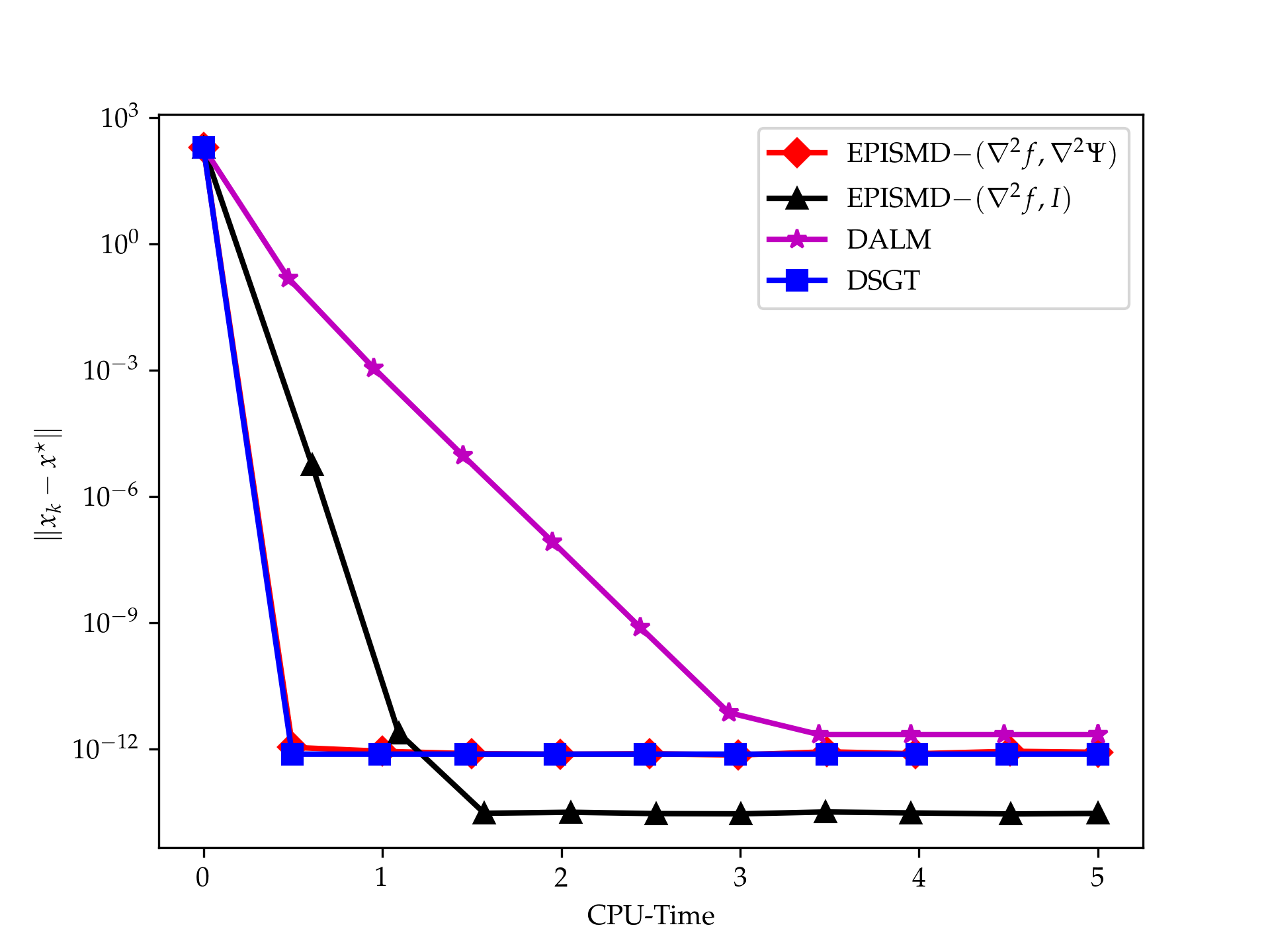}
    \caption{\label{fig:quad_unc_fc_rho2_det}Well conditioned model with a fully connected graph.}
\end{subfigure}\hspace{\fill} % maximize horizontal separation
\begin{subfigure}[t]{0.45\textwidth}
    \includegraphics[width=\linewidth]{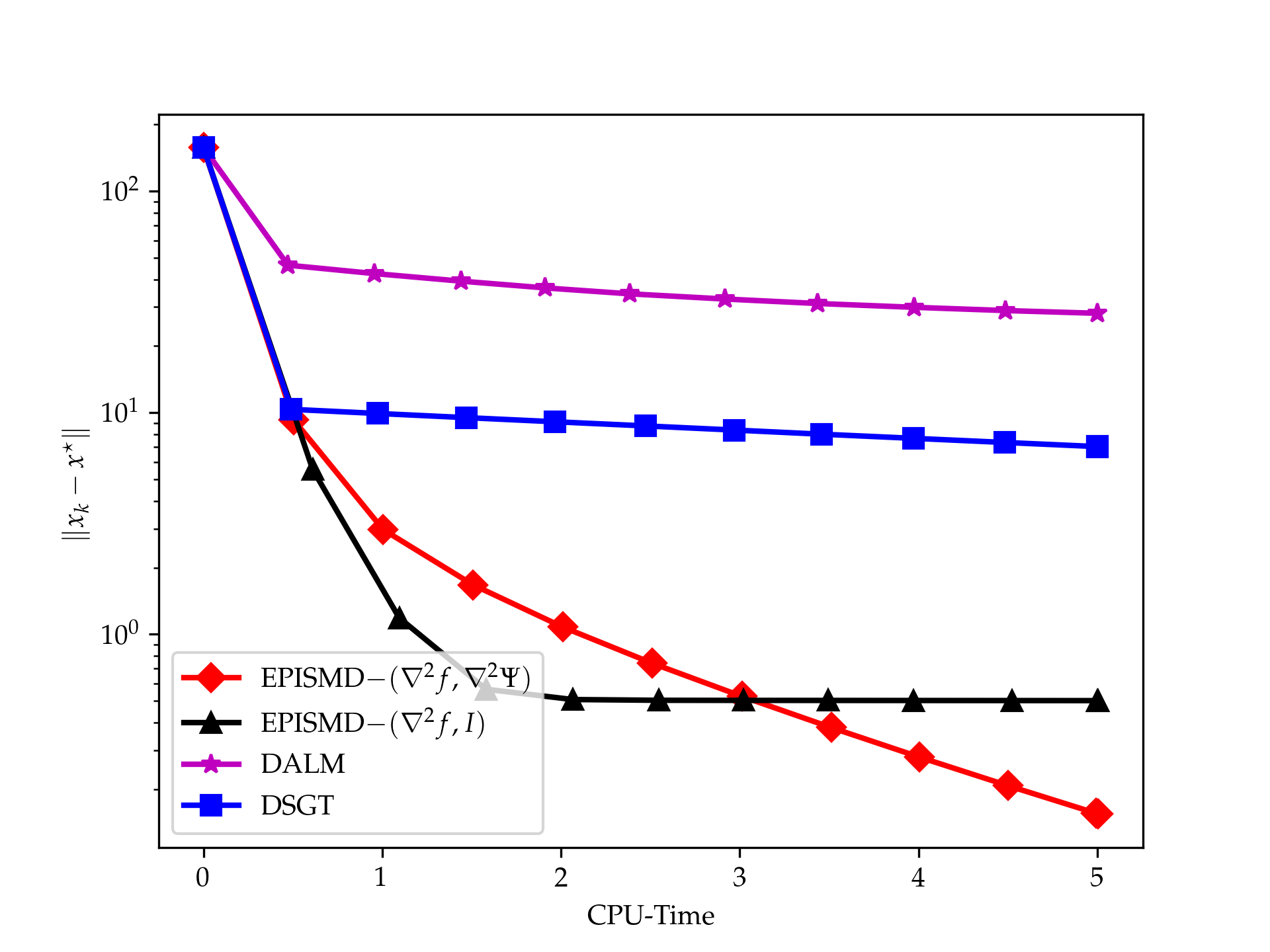}
    \caption{\label{fig:quad_unc_fc_ill_det}
    Ill conditioned model with a fully connected graph.}
\end{subfigure}
\bigskip % more vertical separation
\begin{subfigure}[t]{0.45\textwidth}
    \includegraphics[width=\linewidth]{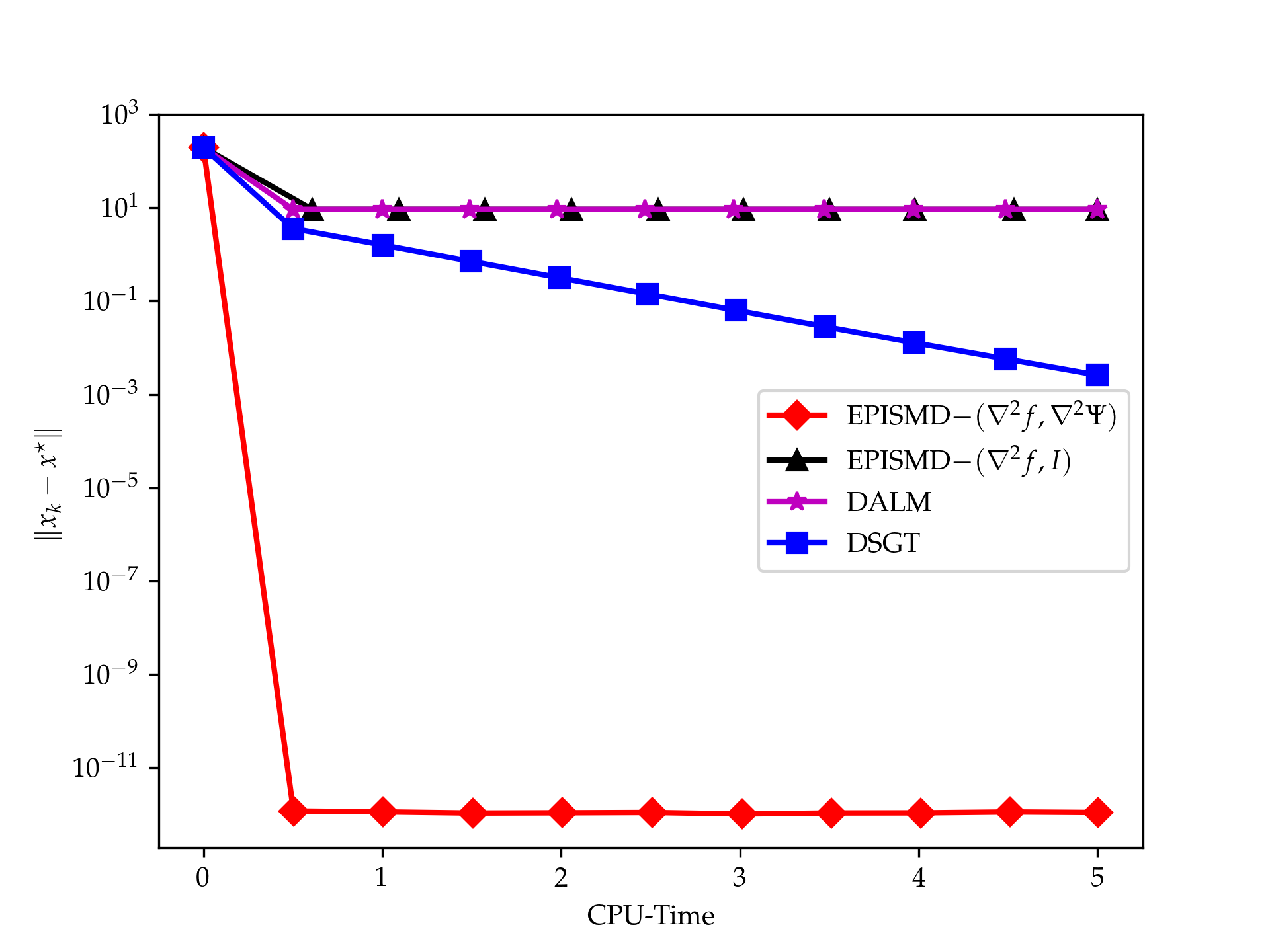}
    \caption{Well conditioned model on a ring-of-cliques graph.\label{fig:quad_unc_cl_rho2_det}}
\end{subfigure}\hspace{\fill} % maximize horizontal separation
\begin{subfigure}[t]{0.45\textwidth}
    \includegraphics[width=\linewidth]{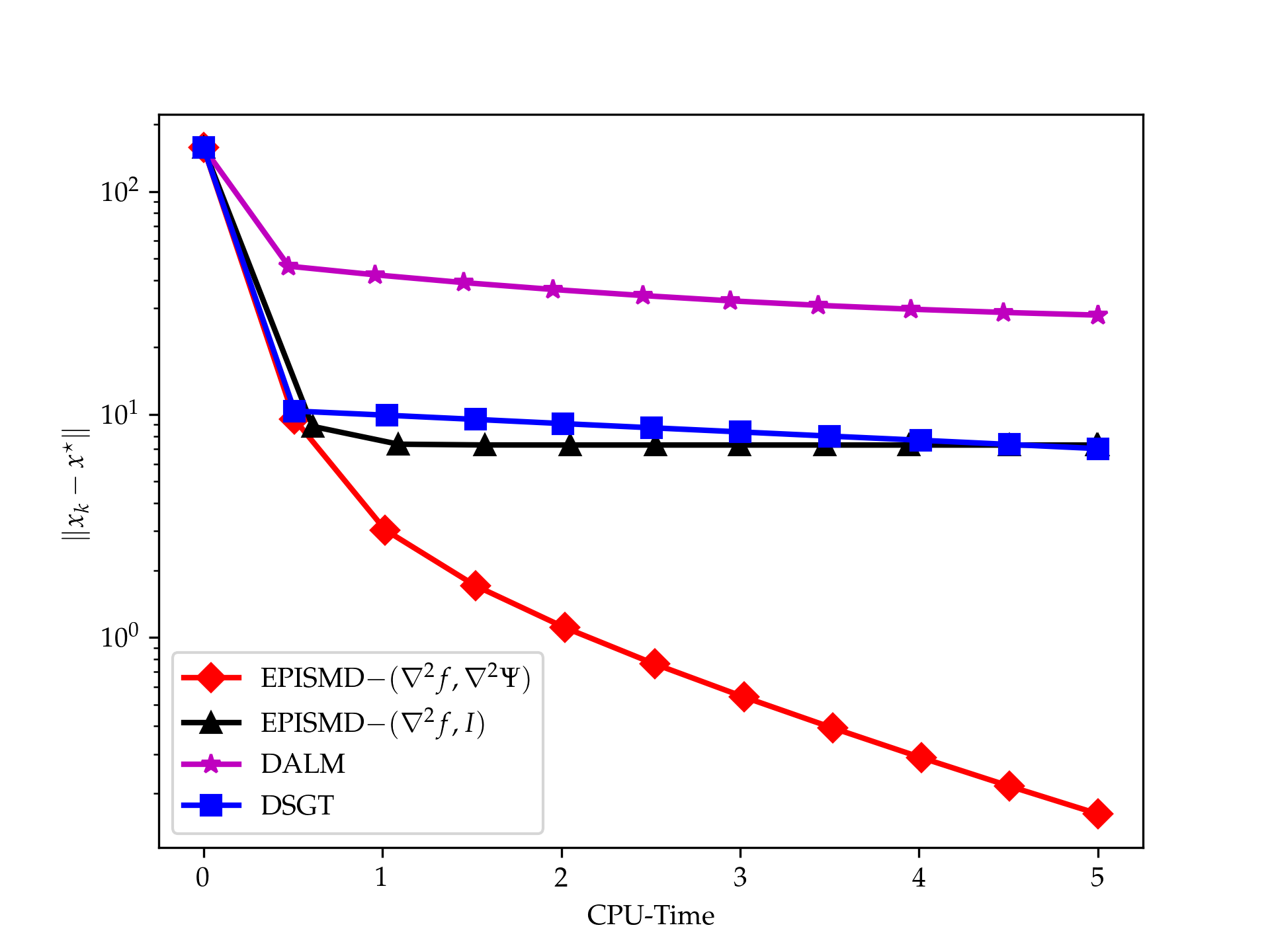}
    \caption{Ill conditioned model on a ring-of-cliques graph.\label{fig:quad_unc_cl_ill_det}}
\end{subfigure}
\caption{Numerical comparisons of distributed optimization algorithms for unconstrained problems.
(a) When both the problem and graph are well-conditioned the proposed method performs similarly to other methods.
(b-d) However when the model and/or graph are ill-conditioned then the proposed algorithm provides a significant improvement over other methods. 
\label{fig:unconstrained case}}.
\end{figure}

\subsection{The Constrained Case}
In this section we repeat the experiments from \ref{sec:num unconstrained} but when $\mathcal X=\Delta$, where $\Delta$ denotes the $d$-dimensional simplex.
In this case we take $\Phi$ to be the negative entropy $\Phi(x)=\sum_{i=1}^d \ln(x_i)$. 
Again EPISMD-$(\nabla^2\Phi,\nabla^2\Psi)$ is implemented with $\nabla^2\Psi=\cL_\beta\nabla^2f\cL_\beta$. 
As a benchmark we use the Distributed Projected Gradient Method (DPGD) method described in \cite{MR3680426}. 
Note that the DSGT and DPALM from the previous section cannot handle constraints.  We repeat the same set of four experiments from \ref{sec:num unconstrained} and plot the results in \ref{fig: constrained results}.
The results are similar as in the unconstrained case, but the impact of using the proposed preconditioner in conjunction with the conventional mirror maps for the primal variables is even more significant. The results demonstrate that using the proposed mirror map on the Lagrangian dual variables has a significant impact when the graph is ill conditioned.

\begin{figure}
\captionsetup[subfigure]{justification=Centering}
\begin{subfigure}[t]{0.45\textwidth}
    \includegraphics[width=\textwidth]{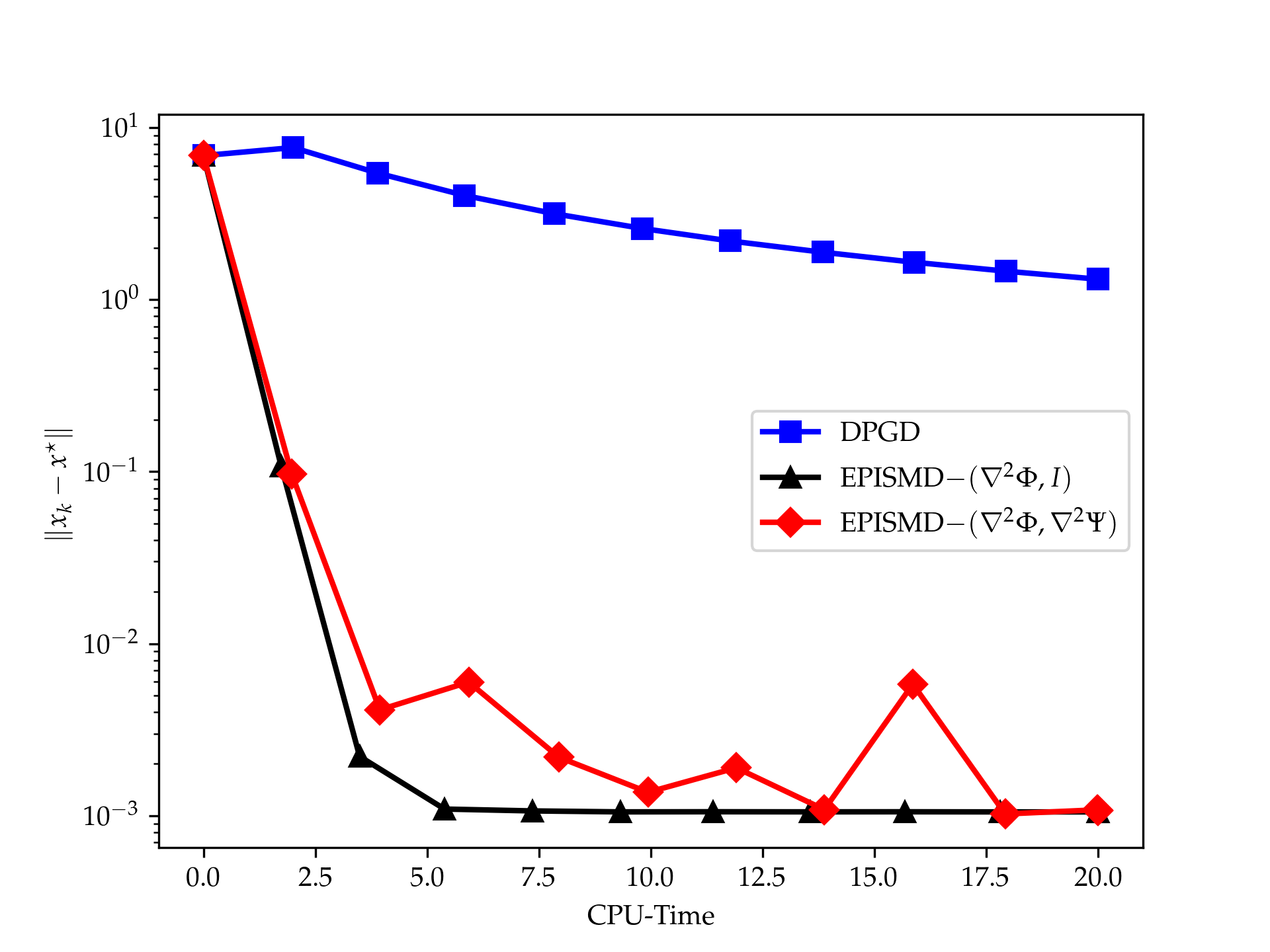}
    \caption{\label{fig:quad_con_con_rho2_det}Well conditioned model with a fully connected graph.}
\end{subfigure}\hspace{\fill} % maximize horizontal separation
\begin{subfigure}[t]{0.45\textwidth}
    \includegraphics[width=\linewidth]{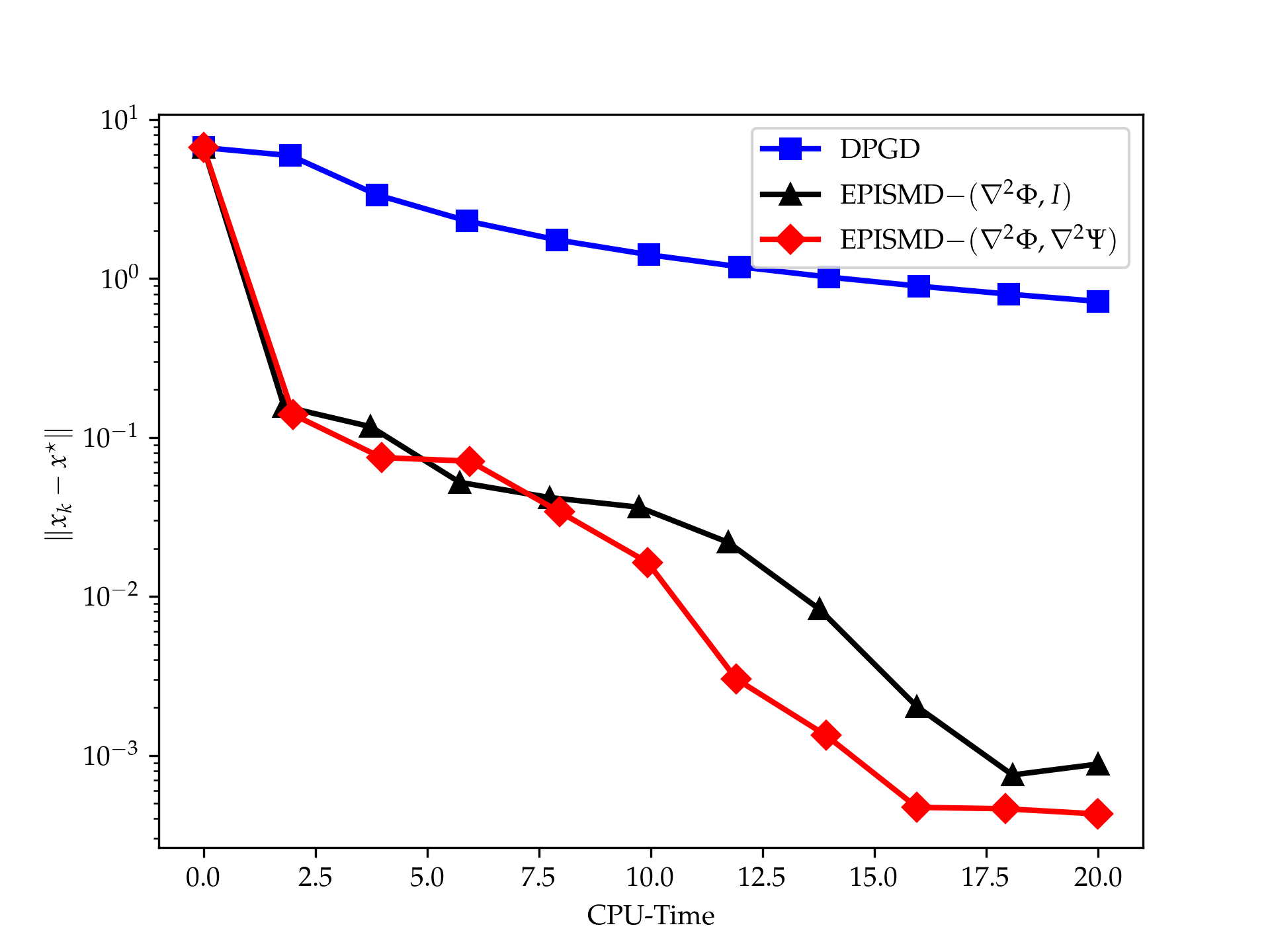}
    \caption{\label{fig:quad_con_fc_ill_det}
    Ill conditioned model with a fully connected graph.}
\end{subfigure}
\bigskip % more vertical separation
\begin{subfigure}[t]{0.45\textwidth}
    \includegraphics[width=\linewidth]{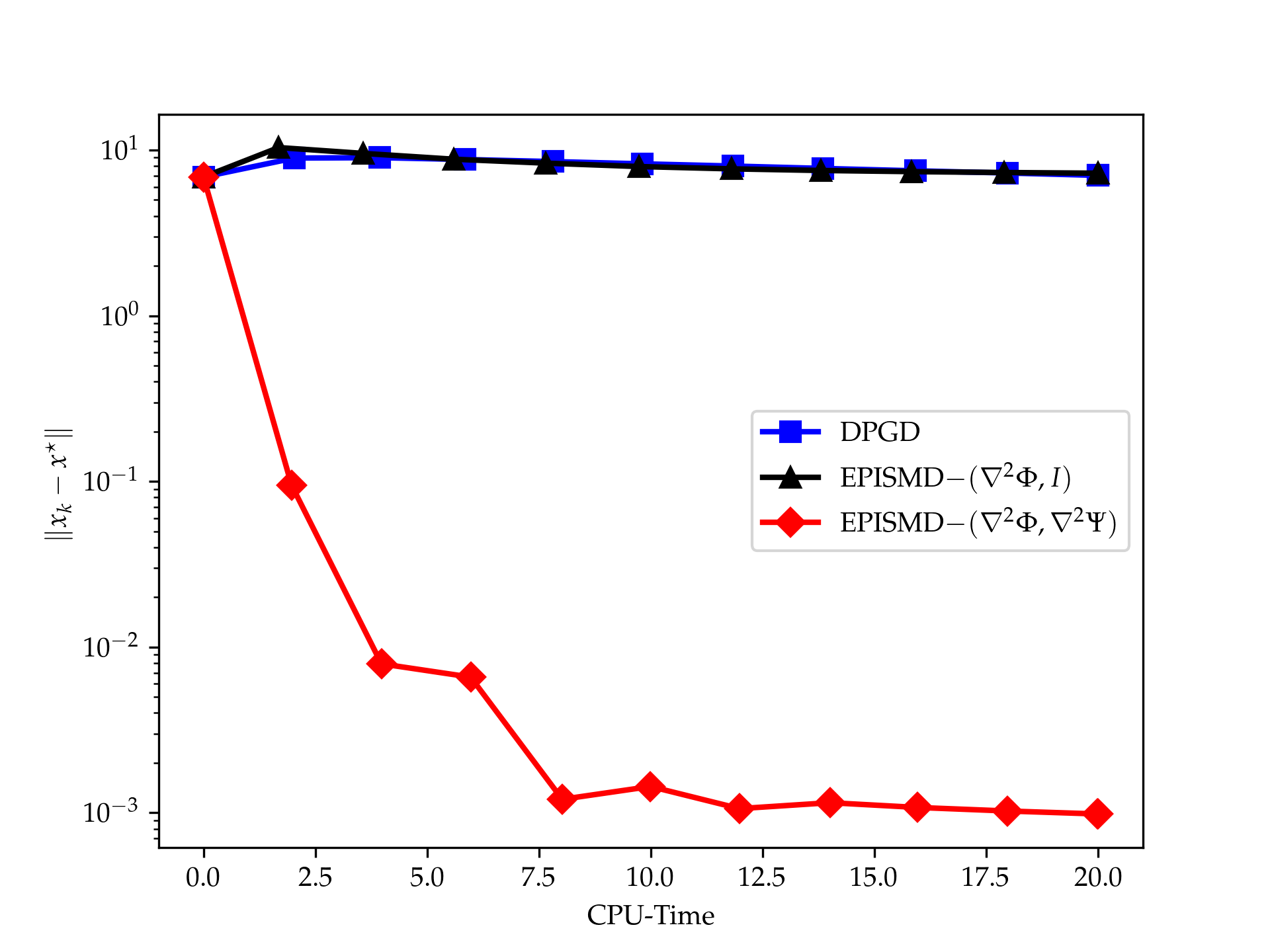}
    \caption{Well conditioned model with a ``ring of cliques" graph.\label{fig:quad_con_cl_rho2_det}}
\end{subfigure}\hspace{\fill} % maximize horizontal separation
\begin{subfigure}[t]{0.45\textwidth}
    \includegraphics[width=\linewidth]{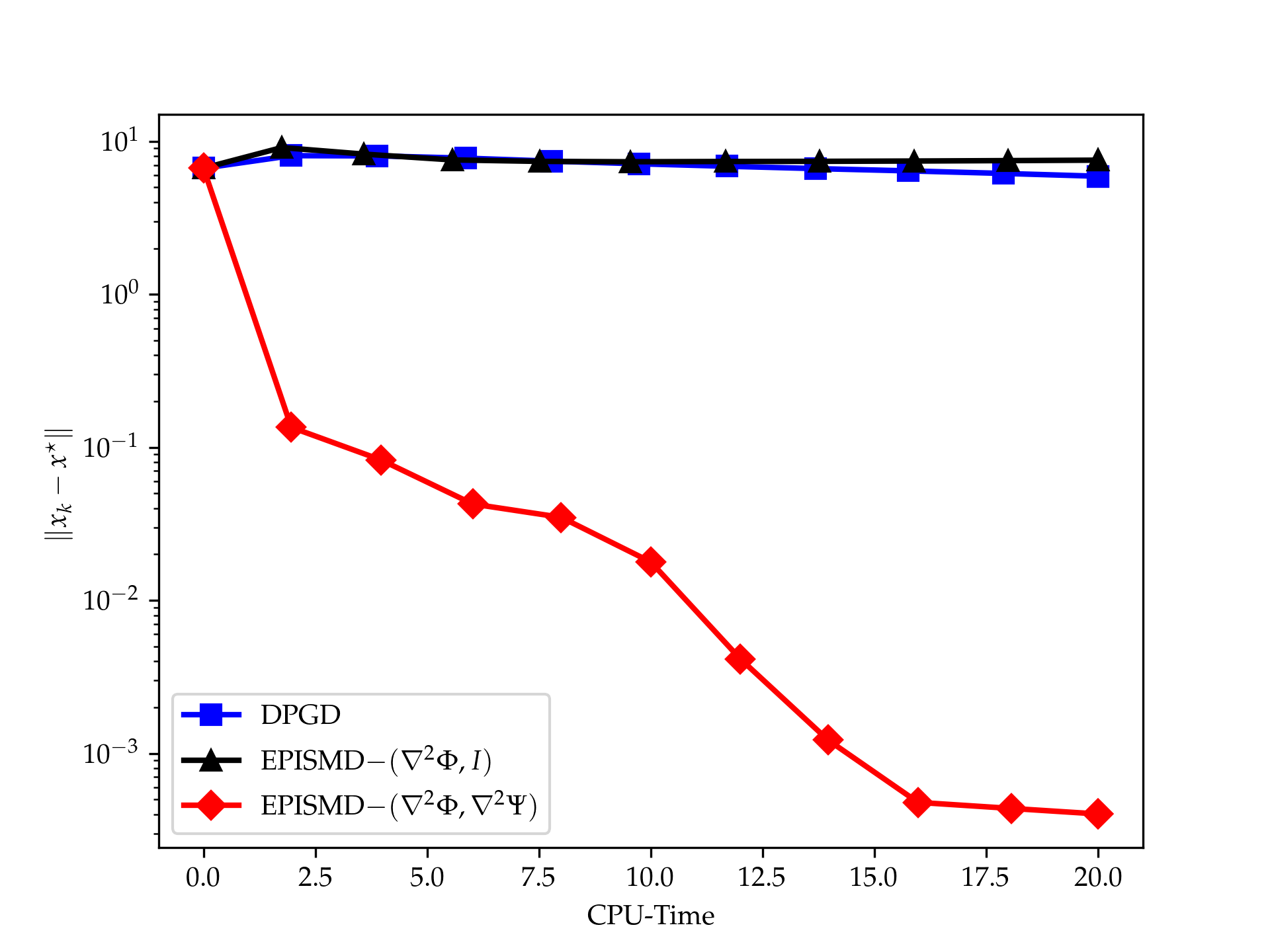}
    \caption{Ill conditioned model with a ``ring of cliques" graph.\label{fig:quad_con_cl_ill_det}}
\end{subfigure}
\caption{Numerical comparisons of distributed optimization algorithms for simplex-constrained problems.
Like the unconstrained case in \ref{fig:unconstrained case} the proposed method is particularly efficient when the graph is ill-conditioned.  
\label{fig: constrained results}
}
\end{figure}

\subsection{The Effect of Noise} As shown in \ref{prop:expconv_stoch_dual} (continuous time) and \ref{prop: discretization main} (discrete time) the effect of noise is an extra error term that depends on $\sigma$. 
To show that the conclusions of the numerical experiments above are still valid for the stochastic case we re-run all the experiments using different levels of noise.
To save space we only show one particular instance of the model in \eqref{eq:test problem}. 
The instance we consider is when the graph is a ring of cliques, the constraint is the $d-$dimensional simplex and the problem is ill-conditioned. In other words, we used the same experimental setup used to produce \ref{fig:quad_con_cl_ill_det} but with different levels of noise. 
The results are shown in \ref{fig:quad_con_con_ill_noise}. 
As can be seen from these results the conclusions from the deterministic experiments are still valid in the stochastic case.

\subsection{The Effect of the Graph's Connectivity} In our last set of experiments we show how the efficiency of the preconditioned dynamics in \eqref{eq: gauss-seidel zm} are essentially unaffected by the graph's connectivity. 
For this experiment we take an Erd{\H o}s-R{\`e}nyi graph with $N=100$ nodes and run EPISMD$-(\nabla^2\Phi,I)$ (i.e. no graph preconditioning) 
and EPISMD$-(\nabla^2\Phi,\nabla^2\Psi)$ (i.e. full preconditioning) on \eqref{eq:test problem} over the $d-$dimensional simplex and when the problem is well-conditioned.
We vary the probability $p$ of two nodes being connected (the parameter in Erd{\H o}s-R{\`e}nyi graphs) and run both algorithms for a fixed amount of CPU-time ($10$ seconds). 
For each probability $p$ we generated $20$ graphs and average the results.
The results are shown in \ref{fig:quad_con_erdos_ill_det}. 
As can be seen from the figure, the efficiency of EPISMD$-(\nabla^2\Phi,\nabla^2\Psi)$ is unaffected by the graph's connectivity; the same level of accuracy is reached independently of the probability $p$. 
On the other hand, the efficiency of EPISMD$-(\nabla^2\Phi,I)$ depends on the probability of two nodes being connected.
After a certain level $p\approx 0.5$ in this experiment we see that EPISMD$-(\nabla^2\Phi,\nabla^2\Psi)$ does not have a significant advantage over EPISMD$-(\nabla^2\Phi,I)$, and only performs marginally better, but still the extra cost of preconditioning seems justified even if the graph is well connected.

\begin{figure}
\captionsetup[subfigure]{justification=Centering}
\begin{subfigure}[t]{0.45\textwidth}
    \includegraphics[width=\textwidth]{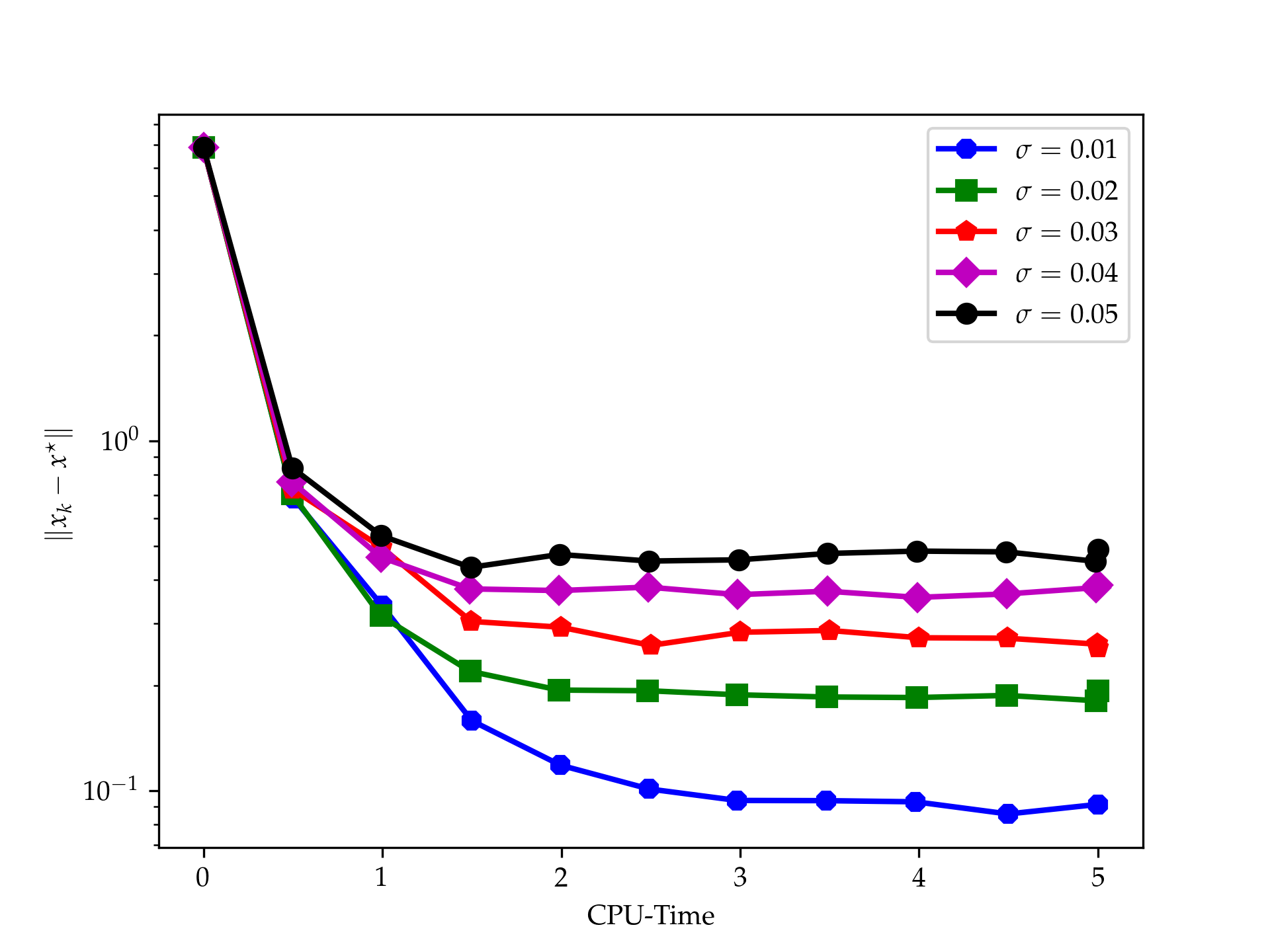}
    \caption{\label{fig:quad_con_con_ill_noise}}
\end{subfigure}\hspace{\fill} % maximize horizontal separation
\begin{subfigure}[t]{0.45\textwidth}
    \includegraphics[width=\linewidth]{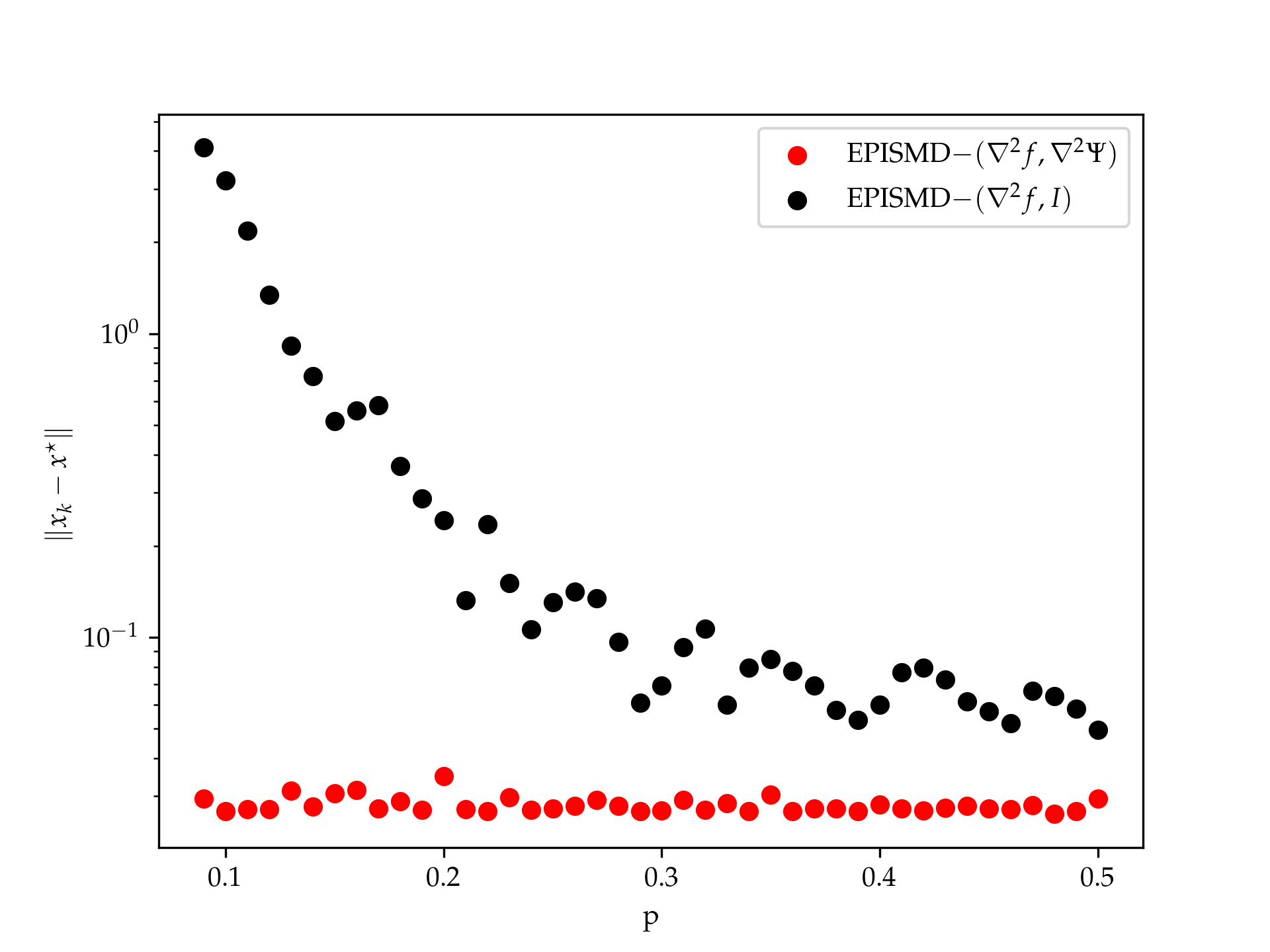}
    \caption{\label{fig:quad_con_erdos_ill_det}
    }
\end{subfigure}
\caption{
(a) The impact of different noise levels in a ring of cliques graph with a $d$-dimensional simplex constraint. (b) Experiment with different Erd{\H o}s-R{\`e}nyi graphs as we vary the probability $p$ that two nodes are connected (x-axis).
In this case we see that the convergence rate of the preconditioned algorithm EPISMD$-(\nabla^2f,\nabla^2\Psi)$ is independent of $p$. This is not the case if we only precondition the primal variables using EPISMD$-(\nabla^2 f,I)$.}
\end{figure} 

\section{Discussion}\label{sec:conclusions}
For future work several interesting extensions can be considered. 
One interesting direction is to study the proposed algorithm for sampling.
For example, the discretization we discussed in this paper will lead to a biased method and the amount of bias will depend on the discretization interval. 
In this sense exactness is lost, but could be recovered again using different methods from the sampling literature \cite{durmus2016sampling}. Addressing this in detail and comparing the differences between sampling and optimization in the context of distributed methods is an interesting direction for future work.

The analysis in this paper relies heavily on convexity. 
% We only numerically explored the non-convex setting in EISMD using a small neural network with a negative entropy mirror map and we have obtained promising preliminary numerical results. 
In preliminary numerical results not presented here EPISMD was used on a non-convex setting to train a small neural network with a negative entropy mirror map with promising performance. Future work can include a theoretical analysis of the non-convex case as well as exploring further the benefits of different mirror maps in the distributed setting. 
This is a challenging direction since the links with the dual are not exact. Nevertheless, the proposed preconditioner could be adapted to the non-convex setting using Levenberg–Marquardt regularization \cite{bazaraa2013nonlinear}. 
Finally, the proposed algorithm relies on second-order information. While this may seem prohibitive in practice, several recent works have shown that second order methods can be applied to problems where $d>10^6$ using sub-sampling and multilevel optimization techniques \cite{tsipinakis2021multilevel}. Jointly sampling second-order information from the Hessian and the graph is an interesting direction to consider for large scale models.

\paragraph{Acknowledgements}
This project was funded by JPMorgan Chase $\&$ Co under J.P. Morgan A.I. Research Awards in 2019 and 2021. %Any views or opinions expressed herein are solely those of the authors listed, and may differ from the views and opinions expressed by JPMorgan Chase $\&$ Co. or its affiliates. This material is not a product of the Research Department of J.P. Morgan Securities LLC. This material does not constitute a solicitation or offer in any jurisdiction. P.P. was supported also by XXXX. 
G.A.P. was partially supported by the EPSRC through grant number EP/P031587/1.

\bibliographystyle{siam}
\bibliography{biblio}

\appendix
\section{Auxiliary results}\label{app:auxiliaryresults}

{
\paragraph{Proof of Lemma \ref{lemma: bound laplacian}}

Using the definition of the pseudo-inverse in \eqref{eq:pseudoinverse} and its relationship with the inverse of the regularized Laplacian in \eqref{eq:reglaplacianpseudo} we obtain,
\begin{equation}
\begin{split}
\inp{\bx}{\mathcal L\bx}&=\inp{\bx}{\mathcal L\mathcal L^+\mathcal L\bx}=\inp{\mathcal L\bx}{ (\cL^{-1}_\beta-\frac{1}{\beta N}\mathbf{1}_d\mathbf{1}_d^\top\otimes I_N) \mathcal L\bx }\\
&=\inp{\mathcal L\bx}{\cL^{-1}_\beta \mathcal L\bx},
\end{split}
\end{equation}
where in the last equality we used the fact that $(\mathbf{1}_d\mathbf{1}_d^\top\otimes I_N) \mathcal L=0$. Since $\cL_{\beta}\preceq \kappa_{\beta,N}I$ then 
$\cL^{-1}_{\beta}\succeq \kappa_{\beta,N}^{-1}I$ and the result follows.

\paragraph{Proof of Lemma \ref{lemma: postive k_g}}
We note that $A(\bx)$ can be obtained by removing the last $Nd$ columns and rows of the following matrix,
\begin{align}
B(\bx):=
\begin{bmatrix}
\nabla^2 f(\bx)+\mathcal L & \mathcal L \\
-\mathcal L & 0 
\end{bmatrix}.
\end{align}
Let $\bd=[\bd_x^\top, \bd_\lambda^\top]^\top$ and note that $\inp{\bd}{B(\bx)\bd}=\bd_x^\top\nabla^2 f(\bx)\bd_x$. It follows from the relative strong convexity assumption that $B(\bx)\succeq \mu_\Phi\nabla^2 \Phi(\bx)$ and therefore
$\|B(\bx)\|^2_{\nabla^2 \Phi(\bz)^{-1}}>0$.
Since $A(\bx)$ can be obtained by removing the last $Nd$ columns and rows of $B(\bx)$ the result follows from the interlacing theorem for singular values, see e.g. Theorem 3.1.3 in \cite{horn1994topics}.

\paragraph{Proof of Lemma \ref{lemma:bound on lagrangian and mirror map v1}}
Since $f$ is twice differentiable there exists an 
$\mathbf{y}$ on the line segment joining $\mathbf{x}$ and $\mathbf{x}^\star$ such that $\nabla f(\mathbf{x})-\nabla f(\mathbf{x}^\star) = \langle \nabla^2 f(\mathbf{y}), \mathbf{x}-\mathbf{x}^\star\rangle$. We then have,
\begin{align}
\|\nabla f(\bx)+\cL\blambda+\cL\bx\|^2_{\nabla^2\Phi^*(\bz)}
&= \|\nabla f(\bx)-\nabla f(\bx^\star)+\mathcal{L}(\blambda_t-\blambda^*)+\mathcal{L}(\mathbf{x}-\mathbf{x}^*)\|^2_{\nabla^2\Phi^*(\mathbf{z})}
\\
& = \|A(\mathbf{y})[\mathbf{x}_t-\mathbf{x}^*, 
\blambda-\blambda^*]^T\|^2_{\nabla^2\Phi^*(\mathbf{z})}\\
& \geq \kappa_g \left(
\|\mathbf{x}^*-\mathbf{x}_t\|^2+\|\blambda^*-\blambda\|^2
\right)\\
&\geq \frac{2\kappa_g}{\hat\mu}
\left(\sum_{i=1}^N D_\Phi(x^\star,x^i)+D_\Psi(\lambda^\star,\lambda^i)
\right).
\end{align}
We use \eqref{eq:breg7} to obtain the bound in terms of the dual variables.

}

\color{black}
\subsection{When first-order optimization fails}\label{sec:first order fails}
Our first result shows that if there exists an $x^\star$ such that $\nabla f_i(x^\star)=0$ for all  $i=1,...,N$, exact consensus can be obtained for ISMD.

\begin{lemma}\label{lem:exact_convergence}
Let Assumptions \ref{ass:f}-\ref{ass:mirror1} hold. 
%\ref{ass:graph}-\ref{ass:f_strconvex} hold. 
Consider the dynamics in \eqref{eq:cmdpart_vec} with $\sigma=0$. 
If
\begin{align}\label{eq:cond conv}
    \bigcap_{i=1}^N\{\nabla f_i(x)=0\}\neq \emptyset,
\end{align}
then $\lim\limits_{t\rightarrow\infty} x_t^i = x^\star,$ where $x^\star$ is an optimal point for \eqref{eq:distoptobj0} such that $x^\star=\min\limits_{x\in\mathcal X^\star} D_{\Phi}(x,x_0)$.

% \begin{align}
% \end{align}
\end{lemma}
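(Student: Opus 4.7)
The plan is to build a Lyapunov function out of Bregman divergences in the dual space, and to exploit the monotonicity of $\nabla\Phi^*$ together with the edge structure of the graph Laplacian. Let $z^\star = \nabla\Phi(x^\star)$ and $\mathbf{z}^\star = \mathbf{1}_N\otimes z^\star$; since $\nabla f_i(x^\star) = 0$ for every $i$, one has $\nabla f(\mathbf{x}^\star) = 0$, and since $\mathbf{z}^\star$ is consensus, $\mathcal{L}\mathbf{z}^\star = 0$. Hence $(\mathbf{x}^\star,\mathbf{z}^\star)$ is an equilibrium of \eqref{eq:cmdpart_vec} with $\sigma=0$. I would study the candidate Lyapunov function
\[
V(\mathbf{z}_t) = \sum_{i=1}^N D_{\Phi^*}(z_t^i, z^\star),
\]
which by \eqref{eq:breg7} equals $\sum_i D_\Phi(x^\star, x_t^i)\ge 0$ and vanishes only at the equilibrium.

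Using \eqref{eq:breg1} I have $\nabla_{\mathbf z} V = \mathbf{x}_t - \mathbf{x}^\star$, so differentiating along the deterministic flow yields
\[
\dot V = \langle \mathbf{x}_t - \mathbf{x}^\star,\, -\nabla f(\mathbf{x}_t)\rangle - \langle \mathbf{x}_t - \mathbf{x}^\star,\, \mathcal{L}\mathbf{z}_t\rangle.
\]
Convexity of $f$ combined with $\nabla f(\mathbf{x}^\star) = 0$ makes the first inner product nonpositive. For the second, using $\mathcal{L}\mathbf{x}^\star = 0$ and $\mathcal{L}\mathbf{z}^\star = 0$ I would rewrite it as $\langle \nabla\Phi^*(\mathbf{z}_t) - \nabla\Phi^*(\mathbf{z}^\star),\, \mathcal{L}(\mathbf{z}_t - \mathbf{z}^\star)\rangle$ and then invoke the edge-wise identity $\langle u, \mathcal{L} v\rangle = \tfrac12\sum_{i,j} A_{ij}\langle u^i - u^j, v^i - v^j\rangle$ to cast it in the form
\[
\tfrac12 \sum_{i,j} A_{ij}\,\langle \nabla\Phi^*(z_t^i) - \nabla\Phi^*(z_t^j),\, z_t^i - z_t^j\rangle \ge 0,
\]
where each summand is nonnegative by monotonicity of $\nabla\Phi^*$ (as the gradient of a convex function). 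Consequently $\dot V\le 0$.

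To conclude, I would appeal to LaSalle's invariance principle. Strong convexity of $\Phi^*$, which follows from $L_\Phi$-smoothness of $\Phi$ in Assumption \ref{ass:mirror1}, controls $\|\mathbf{z}_t - \mathbf{z}^\star\|^2$ by $V(\mathbf{z}_t) \le V(\mathbf{z}_0)$, so trajectories are bounded. On $\{\dot V = 0\}$, the edge sum forces $\nabla\Phi^*(z_t^i) = \nabla\Phi^*(z_t^j)$ along every edge; connectedness of $\mathcal{G}$ and injectivity of $\nabla\Phi^*$ then yield consensus of $\mathbf{x}_t$, and the vanishing of the convexity term forces that common value to lie in $\bigcap_i\{\nabla f_i(x)=0\}$. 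The largest invariant subset is the equilibrium set, delivering $x_t^i\to x^\star$ (monotone convergence of $V$ evaluated at each common minimizer gives uniqueness of the limit point in the standard way). The main obstacle is the Laplacian cross term: because $\mathbf{x}$ and $\mathbf{z}$ are related nonlinearly through the mirror map, one cannot directly appeal to positive-definiteness of $\mathcal{L}$ as in the Euclidean case, and the edge-by-edge monotonicity expansion is precisely what plugs this gap.
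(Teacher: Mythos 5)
Your argument is correct and takes essentially the same route as the paper: the same Lyapunov function $V_t=\sum_{i=1}^N D_{\Phi^*}(z_t^i,z^\star)$, the same convexity bound on the gradient term, and the same sign argument for the interaction term --- you establish the nonnegativity of $\langle \mathbf{x}_t-\mathbf{x}^\star,\mathcal{L}\mathbf{z}_t\rangle$ via the edge-wise monotonicity of $\nabla\Phi^*$, whereas the paper uses the three-point Bregman identity \eqref{eq:triangle_bregman} together with double stochasticity of $A$, but these are equivalent computations. Your concluding step via LaSalle and the Opial-type uniqueness argument is in fact slightly more careful than the paper's, which simply asserts that $dV_t\le 0$ with equality only at $\mathbf{z}=\mathbf{1}_N\otimes z^\star$.
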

\begin{proof}
Let $x_0$ be the initial point of the algorithm, and let $z^\star$ be an optimal (dual) point closest to $z_0=\nabla \Phi(x_0)$ with respect to the divergence generated by $\Phi^*$, 
\[
z^\star=\argmin_{z\in Z^\star} D_{\Phi^*}(z,z_0)
\] 
where $Z^\star=\{z~|~ z=\nabla \Phi(x),\exists x \in \mathcal X: \nabla f_i(x)=0, i=1,\ldots,N\}$. 
By assumption \eqref{eq:cond conv}, $Z^\star$ is not empty. With a slight abuse of notation we let $x^\star=\nabla \Phi^*(z^\star)$ and note that $(x^\star,z^\star)$ is an equilibrium
point for \eqref{eq:cmdpart_vec} (for a strongly convex function it is also the unique equilibrium point, but here we only assume convexity of $f$). 

Define the Lyapunov candidate function $V_t = \sum_{i=1}^N D_{\Phi^\star}(z_t^i,z^\star)$, and note that given our assumptions on $f$, and $\Phi$ it follows that $V$ is a proper function (has compact sub-level sets). Then we obtain,
\begin{align}
    dV_t &= \sum_{i=1}^N(x^\star-x_t^i)^T\nabla f_i(x_t^i)dt +\sum_{i=1}^N (x_t^i-x^\star)^T\sum_{j=1}^N A_{ij}(z_t^j-z_t^i)dt.
\end{align}
Under 
%the assumption that $x^\star$ which minimizes \emph{each} $f_i$ exists, we have by 
convexity of $f$ and optimality at $x^\star$ we have
\begin{align}\label{eq:fpropconv}
    \sum_{i=1}^N(x^\star-x_t^i)^T\nabla f_i(x_t^i) \leq \sum_{i=1}^N (f_i(x^\star)-f_i(x_t^i))\leq 0.
\end{align}
By the triangle equality of the Bregman divergence in \eqref{eq:triangle_bregman},
\begin{align}
    (x_t^i-x^\star)^T(z_t^j-z_t^i)&=-(x^\star-x_t^i)^T(z_t^j-z_t^i)\\
    &=-(\nabla\Phi^*(z^\star)-\nabla\Phi^*(z_t^i))^T(z_t^j-z_t^i)\\
    &= -D_{\Phi^*}(z_t^j,z_t^i)-D_{\Phi^*}(z_t^i,z^\star)+D_{\Phi^*}(z_t^j,z^\star).
\end{align}
Then,
\begin{align}\label{eq:negativitysum}
    \sum_{i=1}^N\sum_{j=1}^N&A_{ij}(x_t^i-x^\star)^T(z_t^j-z_t^i)\\
    &=\sum_{i=1}^N\sum_{j=1}^NA_{ij}\left(-D_{\Phi^*}(z_t^j,z_t^i)-D_{\Phi^*}(z_t^i,z^\star)+D_{\Phi^*}(z_t^j,z^\star)\right)\leq 0,
\end{align}
where we used $A_{ij}\geq 0$, $\sum_{i=1}^N\sum_{j=1}^N D_{\Phi^*}(z_t^i,z^\star) = \sum_{i=1}^N\sum_{j=1}^ND_{\Phi^*}(z_t^j,z^\star)$, and $D_{\Phi^*}(z_t^j,z_t^i)\geq 0$.
Since $V_t>0$ for $\mathbf{z}\neq \mathbf 1_N \otimes z^\star$, $V_t=0$ when $\mathbf{z}=\mathbf 1_N \otimes z^\star$ and $d V_t \leq 0$ with equality only at $\mathbf{z}=\mathbf 1_N \otimes z^\star$ we conclude that $V_t$ is a Lyapunov function for $\mathbf{z}_t$. Also note that since $D_{\Phi^*}(z_t^i,z^\star)=D_{\Phi}(x^\star,x_t^i)$ $V_t$ is also Lyapunov function for $\mathbf{x}_t$. Since all the Lyapunov stability criteria are satisfied (see e.g. c\cite[Theorem 15.4]{bullo2019lectures}) it follows that $x^\star$ is globally asymptotically stable for the dynamics in \eqref{eq:cmdpart_vec}.
\end{proof}

The Lemma above can be extended to an if and only if statement based on the arguments of \cite[Theorem 1]{shi2015network}, but precise details lie beyond the scope of this paper.  If  \eqref{eq:cond conv} is violated, even with the right choice of mirror map, achieving exact consensus is not possible. In general imposing $x^\star$ to satisfy \eqref{eq:cond conv} is quite restrictive as $\nabla f(\bx)=\sum_{i=1}^N\nabla f_i(x^\star)=0$ does not necessarily imply $\nabla f_i(x^\star)=0$ for all $i=1,...,N$. The crucial point to realize here is that if and only if \eqref{eq:cond conv} holds then $(\bx^\star,\bz^\star$) will also be the minimizer of $f(\bx)+\frac{1}{2}\bz^T\mathcal L \bz$; see \cite[Lemma 7]{shi2015network} for details. As a result one can establish consensus at equilibrium and $V_t$ will approach zero at large $t$. 

\color{black}

% \subsection{Approximate convergence of distributed mirror descent}\label{app:approxconv}

% Recall the vectorized ISMD dynamics of \eqref{eq:cmdpart} are
% \begin{align}\label{eq:cmdpart_vec_full}
% d\mathbf{z}_t =\left( - \eta\nabla f(\mathbf{x}_t)-\epsilon\mathcal{L}\mathbf{z}_t\right)dt + \sigma d\mathbf{B}_t, \;\; \mathbf{x}_t = \nabla \Phi^*(\mathbf{z}_t).
% \end{align}

If $\bx^\star$ does not satisfy \eqref{eq:cond conv} and one has just $\nabla f(\bx^\star)=0$, even in the deterministic case exact consensus and optimality at convergence can no longer be achieved. But the arguments above can be used to establish exponential convergence of \eqref{eq:cmdpart_vec} to a neighborhood of $(\bx^\dagger,\bz^\dagger)$ both minimizing $f(\bx)+\frac{1}{2}\bz^T\mathcal L \bz$. The size of this neighborhood naturally depends on the noise and $f$.
%, whose size depends on the objective function.
% If $x^\star$ as per Lemma \ref{lem:exact_convergence} does not exist, We present a result which shows that exponential convergence holds only up to a certain neighborhood of $(\bx^\dagger,\bz^\dagger)$ both minimizing $f(\bx)+\frac{1}{2}\bz^T\mathcal L \bz$, the size of this neighborhood depending on the noise and $f$. 

\begin{proposition}[Approximate convergence of \eqref{eq:cmdpart_vec}]\label{prop:approx_conv_app}
Let Assumptions \ref{ass:f}-\ref{ass:mirror1} hold and assume that $f$ is $\mu_f$-strongly convex w.r.t. $\Phi$. Let $\bx^\dagger=\arg\min \{f(\bx)+\frac{1}{2}\nabla\Phi(\bx)^T\mathcal L \nabla\Phi(\bx)\}$ with $\bx^\dagger = 1_N\otimes x^\dagger$ and $V_t = \frac{1}{N}\sum_{i=1}^N D_{\Phi^*}(z_t^i,z^\dagger)$, where $z_t^i$ obeys the dynamics of \eqref{eq:cmdpart} (or \eqref{eq:cmdpart_vec}). Then we have:
\begin{align}
    \mathbb{E}\left[V_t\right]\leq e^{-\eta\mu_f t} \frac{1}{N}\sum_{i=1}^ND_{\Phi^*}(z_0^i,z^\dagger)+\frac{\sigma^2}{2\eta\mu_f}||\Delta\Phi^*||_\infty+\frac{1}{ \mu_f}\left(f(\bx^\dagger)-f(\bx^\circ)\right),
\end{align}
where $x^\circ=\arg\min_{\bx\in\mathcal X} f(\bx)$.
\end{proposition}

\begin{proof}
Denote $\bx^\dagger=\arg\inf\{f(\bx)+\frac{1}{2}\nabla\Phi(\bx)^T\mathcal L \nabla \Phi(\bx)\}$, $\bz^\dagger=\nabla\Phi(\bx^\dagger)$. Then
\begin{align}
    dV_t =& -\frac{1}{N}\sum_{i=1}^N(x_t^i-x^\dagger)^T\eta \nabla f_i(x_t^i)dt +\epsilon\frac{1}{N}\sum_{i=1}^N (x_t^i-x^\dagger)^T\sum_{j=1}^NA_{ij}(z_t^j-z_t^i)dt \\\label{eq:v_t_ismd}
    &+ \frac{1}{2}\sigma^2\frac{1}{N}\sum_{i=1}^N \text{tr}(\Delta\Phi^*(z_t^i)) dt+ \frac{1}{N}\sum_{i=1}^N (x_t^i-x^\dagger)^TdB_t^i.
\end{align}
Using the $\mu_f$-strong convexity of $f$ we obtain,
\begin{align}
\nabla f(\bx_t)^T(\bx^\dagger-\bx_t)\leq & f(\bx^\dagger) -f(\bx_t) -\mu_f D_{\phi}(\bx^\dagger,\bx_t) \\
\leq & f(\bx^\dagger)-f(\bx^\circ)+f(\bx^\circ)-f(\bx_t) -\mu_f D_{\phi}(\bx^\dagger,\bx_t) 
\\
\leq & f(\bx^\dagger)-f(\bx^\circ) -\mu_f D_{\phi}(\bx^\dagger,\bx_t)  
\end{align}
Substituting in \eqref{eq:v_t_ismd} and using \eqref{eq:breg7}, \eqref{eq:negativitysum} and taking expectations gives
\begin{align} \label{eq:bound-V-1}
    \frac{d\mathbb{E}[V_t]}{dt} \leq - \eta\mu_f \mathbb{E}[V_t] + \eta\left(f(\bx^\dagger)-f(\bx^\circ)\right)+ \frac{1}{2}\sigma^2 ||\Delta\Phi^*||_\infty.
\end{align}
Standard Gr\"onwall arguments gives the result.
\end{proof}

While the relative strong convexity of the objective function can speed up convergence, even in the no noise setting the above bound does not guarantee that the algorithm is able to converge exactly to the optimum. In this setup the additional preconditioning via the mirror map does not facilitate exact convergence nor consensus. When \eqref{eq:cond conv} holds, then $\bx^\circ$, $\bx^\star$ and $\bx^\star$ coincide \cite[Lemma 7]{shi2015network}, so $f(\bx^\dagger)-f(\bx^\circ)=0$.

\end{document}